\newcommand{\bs}{\boldsymbol}
\newcommand{\mbf}{\mathbf}
\newcommand{\bb}{\mathbf b}
\newcommand{\be}{\mathbf e}
\newcommand{\bE}{\mathbf E}
\newcommand{\bF}{\mathbf F}
\newcommand{\bv}{\mathbf v}
\newcommand{\bw}{\mathbf w}
\newcommand{\bu}{\mathbf u}
\newcommand{\bx}{\mathbf x}
\newcommand{\bg}{{\boldsymbol g}}
\newcommand{\bG}{{\boldsymbol G}}
\newcommand{\bff}{{\boldsymbol f}}
\newtheorem{theorem}{Theorem}
\newtheorem{lema}{Lemma}
\newcounter{remark}
\def\theremark {\arabic{remark}}
\newenvironment{remark}{\refstepcounter{remark}\par\noindent{\bf Remark\ \theremark}\ }{\par}
\newtheorem{Proof}{Proof}
\newenvironment{proof}{\begin{Proof}\rm}{\hfill $\Box$ \end{Proof}}
\newcommand{\nrm}{\interleave}
\title{Error analysis of projection methods for non inf-sup stable mixed finite elements. The Navier-Stokes equations.}
\author{Javier de Frutos\thanks{Instituto de Investigaci\'on en Matem\'aticas (IMUVA),
Universidad de Valladolid, Spain.  Research supported
under grants MTM2013-42538-P (MINECO, ES) and MTM2016-78995-P (AEI/FEDER, UE). frutos@mac.uva.es} \and Bosco
Garc\'{\i}a-Archilla\thanks{Departamento de Matem\'atica Aplicada
II, Universidad de Sevilla, Sevilla, Spain. Research  under grant MTM2015-65608-P (MINECO, ES). bosco@esi.us.es}
  \and Julia Novo\thanks{Departamento de
Matem\'aticas, Universidad Aut\'onoma de Madrid, Instituto de
Ciencias Matem\'aticas CSIC-UAM-UC3M-UCM, Spain. Research supported
under grants MTM2013-42538-P (MINECO, ES) and MTM2016-78995-P (AEI/FEDER, UE). julia.novo@uam.es}}
\date{}
\begin{document}
\maketitle
\begin{abstract}
We obtain error bounds
 for  a modified Chorin-Teman (Euler non-incremental) method for non inf-sup stable mixed finite elements
applied to the evolutionary Navier-Stokes equations. The analysis of the classical Euler non-incremental  method is
 obtained as a particular case. We  prove that the modified Euler non-incremental
scheme has an inherent stabilization that allows the use of non inf-sup
stable mixed finite elements without any kind of extra added
stabilization. We show that it is also true in the case of the classical
Chorin-Temam method. The relation of
the methods with the so called pressure stabilized Petrov Galerkin method
(PSPG) is established. We do not assume non-local
compatibility conditions for the solution.
\end{abstract}

\noindent
{\bf Keywords}
Projection methods, non inf-sup stable elements, Navier-Stokes equations, PSPG stabilization.

\section{Introduction}
 We analyze a modified Chorin-Teman (Euler non-incremental) projection method for non
inf-sup stable mixed finite elements with a pressure space $Q_h\subset H^1(\Omega)$.
As a particular case we obtain the analysis of the classical Chorin-Temam method. We prove that both the modified and the standard methods have an inherent stabilization of PSPG type
that allow the use of non inf-sup stable mixed finite elements without any kind of extra added stabilization. This result was known in the literature,
see for example \cite{Guermond_Quar_IJNMF}, but to our knowledge there were no available error bounds for the case of non inf-sup stable elements (see below for related results in~\cite{badia-codina}). In reference~\cite{Javier-Bosco-JuliaI} we considered  the case of the transient Stokes equations assuming enough regularity for the solution. In the present paper the analysis is applied to the evolutionary Navier-Stokes equations without assuming non-local compatibility conditions.  We consider a explicit treatment of the nonlinear convection term since this is easier to implement in practice, although our analysis, if slightly modified, also covers the case
of a fully implicit treatment of the nonlinear term. The analysis of the Chorin-Temam method holds under condition
$\Delta t \ge C h^2 $ (and assuming also $\Delta t=O(h)$)
which is in agreement with the
error bounds in \cite{badia-codina} where the authors prove error bounds for the Euler non-incremental scheme for LBB stable elements assuming also $\Delta t\ge C h^2$.  This result is also in agreement with the fact that had been observed in the literature that the standard Euler non-incremental scheme
provides computed pressures that behave unstably for $\Delta t$ small and fixed $h$ if non inf-sup
stable elements are used, see \cite{codina}.  With our error analysis we clarify this question since we show that, when~$\Delta t\rightarrow 0$,
the inherent PSPG stabilization of the method disappears. On the other hand, for the modified Euler non-incremental method that we propose, the PSPG stabilization does not disappear when $\Delta t \rightarrow 0$, which allows to use~$\Delta t$ as small as desired in this modified method.
Our results are
also in agreement with the classical results for the continuous in space Euler non-incremental method (see for example \cite{Guermond_overview}) since we prove that the rate of convergence in terms of $\Delta t$ in the $L^2$ norm of the velocity is one and the rate of convergence in the $H^1$ norm of the velocity and the $L^2$ norm of the pressure is one half.

It is well-known (see e.g., \cite[Corollary~2.1]{heyran1}) that the solution of the Navier-Stokes equations, no
matter how smooth the initial velocity and the forcing term are, cannot
be expected to have third spatial derivatives bounded up to $t=0$, unless certain nonlocal compatibility conditions (which
are difficult to check in practice and cannot be realistically assumed) are satisfied. For the pressure, the
same can be said for~second spatial derivatives.
To cope with this fact in this paper we obtain error bounds  that do not
require the above-mentioned compatibility conditions to be satisfied.

Of course, the Chorin-Temam projection method  is well known and this is not the first paper where the analysis of this method is considered. The analysis of the semidiscretization in time  is carried out in \cite{shen1}, \cite{shen2}, \cite{ranacher}, \cite{Prohl}, \cite{Prohl2}. In \cite{codina} the stability of the Chorin-Temam
projection method is considered and, in case of non inf-sup stable mixed finite elements, some a priori bounds
for the approximations to the velocity and pressure are obtained
but no error bounds are proven for this method. In  \cite{badia-codina} the Chorin-Teman method is considered together with both non inf-sup stable and inf-sup stable mixed finite elements.
In case of using non inf-sup stable mixed finite elements a local projection type stabilization is required in \cite{badia-codina} to get the
error bounds of the method. Both in the present paper and in~\cite{Javier-Bosco-JuliaI}, however, we get optimal error bounds without any extra stabilization for non inf-sup stable mixed finite elements.

For the Euler incremental scheme the analysis of the semidiscretization in time  can be found in~\cite{Prohl}.
The Euler incremental scheme with a spatial discretization based on inf-sup stable mixed finite elements is analyzed in \cite{Guermond_Quar2}. To our knowledge there is no error analysis for this method in case of using non-inf-sup stable elements other than the one in~\cite{Javier-Bosco-JuliaI}.  Some stability estimates can be found in \cite{codina} for the method with added stabilization terms more related to local projection stabilization than to the PSPG stabilization we consider in the present paper.
A stabilized version of  the incremental scheme is also proposed in \cite{minev} although no error bounds are proved.
Finally, for an overview on projection methods we refer the reader to \cite{Guermond_overview}.

Being the Chorin-Temam projection method an old one, it has seen the appearance of many alternative methods during the years, many of which possess better convergence properties. The purpose of the present
paper is not to discuss its advantages of disadvantages with respect to newer methods, but just to analyze the method when used in
combination with non inf-sup stable elements, a task not fully carried out in the previous literature.

The outline of the paper is as follows. In the first section we introduce some notation. In Section 3 we state some results about a stabilized Stokes approximation
that was introduced in \cite{Javier-Bosco-JuliaI}. In Section 4 we get the error analysis of the method for the transient Stokes equations.
Finally, in the last section we prove the error bounds for the method for the Navier-Stokes equations.  The analysis is based on a stability plus consistency arguments with
stability restricted to $h$-dependent thresholds and is strongly based on the results for the transient Stokes equations obtained in Section 4.

\section{Preliminaries and notation}
Throughout the paper, standard notation is used for Sobolev
spaces and corresponding norms. In particular, given a measurable set $\omega\subset\mathbb{R}^d$, $d=2,3$, its Lebesgue
measure is denoted by~$|\omega|$,  the inner product in $L^2(\omega)$ or $L^2(\omega)^d$ is
denoted by $(\cdot,\cdot)_\omega$ and the notation $(\cdot,\cdot)$ is used instead
of $(\cdot,\cdot)_\Omega$.  The semi norm in $W^{m,p}(\omega)$ will be denoted
by $|\cdot|_{m,p,\omega}$ and, following~\cite{Constantin-Foias}, we define the norm~$\left\|\cdot\right\|_{m,p,\omega}$ as
$$
\left\| f\right\|_{m,p,\omega}^p=\sum_{j=0}^m \left|\omega\right|^{\frac{p(j-m)}{d}} \left| f\right|_{j,p,\omega}^p,
$$
so that $\left\|f\right\|_{m,p,\omega}\left|\omega\right|^{\frac{m}{d}-\frac{1}{p}}$ is scale invariant. We will also use
the conventions $\|\cdot\|_{m,\omega}=\|\cdot\|_{m,2,\omega}$ and
$\|\cdot\|_{m}=\|\cdot\|_{m,2,\Omega}$. As it is usual we will use the special notation $H^s(\omega)$ to denote $W^{s,2}(\omega)$ and
we will denote by $H_0^1(\Omega)$ the subspace of functions of  $H^1(\Omega)$  satisfying homogeneous Dirichlet boundary conditions. Finally, $L^2_0(\Omega)$ will denote the subspace of function of $L^2(\Omega)$ with zero mean.

Let us denote by $\mathcal T_h$ a triangulation of the domain $\Omega$, which, for simplicity, is assumed to have a Lipschitz polygonal boundary.
On $\mathcal T_h$, we consider the  finite element spaces $V_h \subset V=H_0^1(\Omega)^d$ and
$Q_h \subset L_0^2(\Omega) \cap H^1(\Omega)$ based on local polynomials of degree $k$ and
$l$ respectively. Equal degree polynomials for velocity and pressure are allowed. In the sequel it will be assumed that the family of meshes are regular.

Concerning the discretization, we shall  assume that the family of
meshes is quasi-uniform that is for a constant~$\Lambda\ge 1$, the following
inequality holds
\begin{equation}
\label{eq:quasi}
h/h_K \le \Lambda, \qquad \forall K\in {\mathcal T}_h,
\end{equation}
where $h_K$ is the diameter of the $K \in {\mathcal T}_h$ and $h=\max_{K \in {\mathcal T}_h}h_K$.

We shall also assume that the triangulations are regular enough so that
for a constant $c_{\mathrm{inv}}>0$ the following
inequality holds for each $v_{h} \in V_{h}$, see e.g., \cite[Theorem 3.2.6]{Cia78},
\begin{equation}
\label{inv} \| \bv_{h} \|_{W^{m,p}(K)} \leq c_{\mathrm{inv}}
h_K^{l-m-d\left(\frac{1}{q}-\frac{1}{p}\right)}
\|\bv_{h}\|_{W^{l,q}(K)},
\end{equation}
where $0\leq l \leq m \leq 1$, $1\leq q \leq p \leq \infty$, and $h_K$
is the size (diameter) of the mesh cell $K \in \mathcal T_h$.

We will denote by $I_h \bu \in V_{h}$ the Lagrange interpolant of a continuous function $\bu$. The following bound can be found  in
\cite[Theorem 3.1.6]{Cia78}
\begin{equation}\label{cota_inter}
|\bu-I_h \bu|_{W^{m,p}(K)^d}\le c_\text{\rm int} h^{l'-m-d\left(\frac{1}{q}-\frac{1}{p}\right)}|\bu|_{W^{l',q}(K)^d},\quad 0\le m\le 1\le l',
\end{equation}
where $l'>d/q$ when $1< q\le \infty$ and $l'\ge d$ when $q=1$.

Let~$\lambda$ be the smallest eigenvalue of  $A=-\Delta$ subject to homogeneous Dirichlet  boundary conditions, $\Delta$ being the Laplacian operator in~$\Omega$. Then, it is well-known that there exists a scale-invariant positive constant~$c_{-1}$ such that
\begin{equation}
\label{eq:cota_menos1}
\left\| \bv\right\|_{-1}\le c_{-1}\lambda^{-1/2} \left\| \bv\right\|_0,\qquad
\bv\in L^{2}(\Omega)^d,
\end{equation}
and, also,
\begin{eqnarray*}
\left\| \bv\right\|_{0}\le \lambda^{-1/2} \left\|\nabla \bv\right\|_0,\qquad
\bv\in H^1_0(\Omega)^d,
\end{eqnarray*}
this last inequality also known as the Poincar\'e inequality. As a consequence of the above, there exist a scale-invariant constant
$c_P>0$ such that
\begin{equation}
\label{eq:poincare2}
\left\| \bv\right\|_{1}\le c_P \left\|\nabla \bv\right\|_0,\qquad
\bv\in H^1_0(\Omega)^d,
\end{equation}

We will use the following well-known inequalities:
\begin{description}
\item{i)} Sobolev's inequality,
\cite{Adams}: For $s>0$
there exist a scale-invariant constant $c_s>$ such that for~$p\in[1,\infty)$
satisfying $\frac{1}{p}\ge \frac{1}{2}-\frac{s}{d}$, the following inequality
holds
\begin{equation}\label{eq:sob}
\|v\|_{L^{p}(\Omega)} \le c_{s}\left|\Omega\right|^{\frac{s}{d}-\frac{1}{2}+
\frac{1}{p}} \| v\|_{s}, \qquad
\quad v \in
H^{s}(\Omega).
\end{equation}
For $p=\infty$, the relation is valid if $0>\frac{1}{2}-\frac{s}{d}$.

\item{ii)} Agmon's inequality,
\begin{equation}
\label{eq:agmon}
\left\|v \right\|_\infty\le c_{\mathrm{A}}\left\{
\begin{array}{lcl}  \left\|v\right\|_0^{1/2} \left\|v\right\|_2^{1/2},&\quad
d=2,\\
\left\|v\right\|_1^{1/2} \left\|v\right\|_2^{1/2},&\quad
d=3,\end{array}\right.
\qquad v\in H^2(\Omega).
\end{equation}
The case $d=2$ is a direct consequence of~~\cite[Theorem 3.9]{Agmon}.
For $d=3$, a proof for domains of class~$C^2$ can be found in~\cite[Lemma~4.10]{Constantin-Foias}, but thanks to the Calder\'on extension  theorem~(see e.g., \cite[Theorem 4.32]{Adams} the proof is valid for bounded Lipschitz domains.

\item{iii)} The following version of H\"older's inequality
\begin{equation}
\label{eq:tri_holder}
\Bigl|\int_{\Omega}v_1v_2v_3\,dx\Bigr| \le
\left\|v_1\right\|_{L^{p_1}(\Omega)} \left\|v_2\right\|_{L^{p_2}(\Omega)} \left\|v_3\right\|_{L^{p_3}(\Omega)}
,\quad  \frac{1}{p_1}+
 \frac{1}{p_2}+ \frac{1}{p_3}=1.
\end{equation}
We shall frequently apply this inequality with~$p_1=2$, $p_2=2d/(d-1)$ and
$p_3=2d$, or~$p_1=\infty$, and $p_2=p_3=2$.

\item{iv}) The following inequality
\begin{equation}
\label{eq:parti_ineq}
\left\|v\right\|_{L^{\frac{2d}{d-1}}(\Omega)} \le c_{1}^{1/2}\left\|v\right\|_0^{1/2}
\left\|\nabla  v\right\|_0^{1/2},\qquad v\in H^1(\Omega).
\end{equation}
which is a consequence of Sobolev's inequality and the convexity
inequality (see e. g.,
\cite[\S~II.1]{Galdi}).
\end{description}
All previous inequalities are also valid for vector-valued functions.
\section{A Stabilized Stokes approximation}

Let us consider the Stokes problem
\begin{eqnarray}\label{eq:stokes}
-\nu\Delta {\mbf s}+\nabla z&=&\hat \bg,\quad {\rm in}\quad \Omega\nonumber\\
\nabla \cdot {\mbf s}&=& 0,\quad {\rm in}\quad \Omega\\
{\mbf s}&=&{\bs 0},\quad{\rm in}\quad \partial \Omega.\nonumber
\end{eqnarray}
As in \cite{Javier-Bosco-JuliaI} we define the stabilized Stokes approximation to (\ref{eq:stokes}) as the mixed
finite element approximation $({\mbf s}_h,z_h)\in (V_h,Q_h)$ satisfying
\begin{eqnarray}\label{eq:pro_stokes}
\nu(\nabla {\mbf s}_h,\nabla {\bs\chi}_h)+(\nabla z_h,{\bs\chi}_h)&=&(\hat\bg,{\bs\chi}_h),\quad \forall {\bs\chi}_h\in V_h,\\
(\nabla \cdot {\mbf s}_h,\psi_h)&=&-\delta(\nabla z_h,\nabla \psi_h),\quad \forall \psi_h\in Q_h,\label{eq:pro_stokes2}
\end{eqnarray}
where $\delta$ is a constant parameter.
Observe that from~\eqref{eq:stokes} and~\eqref{eq:pro_stokes} it follows that the errors
${\mbf s}_h-{\mbf s}$ and~$z_h-z$ satisfy that
\begin{equation}
\label{eq:gal_orthog}
\nu(\nabla({\mbf s}_h-{\mbf s}),\nabla{\bs\chi}_h) + (\nabla (z_h-z),{\bs\chi}_h)=0,\qquad \forall{\bs\chi}_h\in V_h.
\end{equation}
From now on we will use $C$ to denote a generic non-dimensional constant.

We now state two lemmas that will be used in the sequel.
The proof of the following lemma can be found in~\cite[Lemma 3]{Burman}, see also \cite[Lemma 2.1]{John_Novo_PSPG}.
\begin{lema}\label{lema_presion}
For  $\psi_h \in Q_h $ it holds
\begin{eqnarray*}
\|\psi_h\|_0\le C h\|\nabla \psi_h\|_0+C\sup_{{\bs\chi}_h\in V_h}\frac{(\psi_h,\nabla \cdot {\bs\chi}_h)}{\|{\bs\chi}_h\|_1}.
\end{eqnarray*}
\end{lema}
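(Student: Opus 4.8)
The plan is to exploit the well-known surjectivity of the divergence operator from $H_0^1(\Omega)^d$ onto $L_0^2(\Omega)$ (the continuous inf-sup condition), together with the approximation properties of $V_h$. Since $\psi_h \in Q_h \subset L_0^2(\Omega)$, there exists $\bv \in H_0^1(\Omega)^d$ with $\nabla\cdot\bv = \psi_h$ and $\|\bv\|_1 \le C\|\psi_h\|_0$; this is the classical result of Bogovskii/Ne\v{c}as. I would then write $\|\psi_h\|_0^2 = (\psi_h,\nabla\cdot\bv)$ and insert the Lagrange interpolant (or a Scott--Zhang/Cl\'ement quasi-interpolant, which is what is really needed since $\bv$ need only be $H^1$) $I_h\bv \in V_h$, splitting
\begin{equation*}
\|\psi_h\|_0^2 = (\psi_h,\nabla\cdot(\bv - I_h\bv)) + (\psi_h,\nabla\cdot I_h\bv).
\end{equation*}
The second term is bounded by $\|\psi_h\|_0$-times-the-supremum quantity on the right-hand side, after dividing and multiplying by $\|I_h\bv\|_1 \le C\|\bv\|_1 \le C\|\psi_h\|_0$ (stability of the quasi-interpolant in $H^1$).

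For the first term I would integrate by parts cell by cell: $(\psi_h,\nabla\cdot(\bv-I_h\bv))$ becomes $-\sum_K (\nabla\psi_h,\bv-I_h\bv)_K$ plus boundary terms that cancel because $\psi_h \in H^1(\Omega)$ is continuous across interelement faces and $\bv - I_h\bv$ vanishes on $\partial\Omega$ (this is precisely where the hypothesis $Q_h \subset H^1(\Omega)$ enters — it is what distinguishes this PSPG-type estimate from the usual one). Hence this term is at most $\|\nabla\psi_h\|_0 \, \|\bv - I_h\bv\|_0 \le C h \|\nabla\psi_h\|_0 \|\bv\|_1 \le C h \|\nabla\psi_h\|_0 \|\psi_h\|_0$, using the interpolation bound \eqref{cota_inter} with $m=0$, $l'=1$, $p=q=2$ together with the quasi-uniformity \eqref{eq:quasi}. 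Collecting the two contributions gives $\|\psi_h\|_0^2 \le \big(C h\|\nabla\psi_h\|_0 + C\sup_{{\bs\chi}_h}(\psi_h,\nabla\cdot{\bs\chi}_h)/\|{\bs\chi}_h\|_1\big)\|\psi_h\|_0$, and dividing by $\|\psi_h\|_0$ finishes the argument.

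The main obstacle is the first term: one must take the divergence of the interpolation error and integrate by parts, and this is only legitimate if the interelement jump terms vanish, which forces the use of the continuity of $\psi_h$ (i.e. $Q_h\subset H^1$) rather than a purely elementwise argument; one also needs the quasi-interpolant $I_h$ to be defined on merely-$H^1$ functions and to be $H^1$-stable, so strictly speaking $I_h$ here should be read as a Scott--Zhang/Cl\'ement operator rather than the Lagrange interpolant of \eqref{cota_inter}, whose bounds nonetheless hold in the same form. Everything else is a routine application of H\"older's inequality and standard approximation theory.
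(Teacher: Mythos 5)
Your argument is correct and is essentially the standard proof: the paper itself does not prove this lemma but defers to \cite[Lemma 3]{Burman} and \cite[Lemma 2.1]{John_Novo_PSPG}, where exactly this reasoning appears --- lift $\psi_h$ via the continuous inf-sup condition to some $\bv\in H^1_0(\Omega)^d$ with $\nabla\cdot\bv=\psi_h$, split off a quasi-interpolant, and integrate the interpolation-error term by parts using $Q_h\subset H^1(\Omega)$. Your remarks about needing a Cl\'ement/Scott--Zhang operator (since $\bv$ is only $H^1$) and about where the continuity of $\psi_h$ enters are both accurate; note only that the integration by parts can be done globally at once, since $\psi_h\in H^1(\Omega)$ and $\bv-I_h\bv\in H^1_0(\Omega)^d$, so the elementwise jump bookkeeping is not really needed.
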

\begin{lema}\label{le:duality} There exist a constant~$C>0$ such that
for any $\bv\in H^1_0(\Omega)^d$ with $\hbox{\rm div}(\bv)=0$, $q\in~L^2_0(\Omega)$,  $\bv_h\in V_h$ and~$q_h\in Q_h$ satisfying
\begin{eqnarray}
\label{eq:gal_orthogv}
\nu(\nabla(\bv_h-\bv),\nabla{\bs\chi}_h) + (\nabla (q_h-q),{\bs\chi}_h)=0,\qquad \forall{\bs\chi}_h\in V_h,\\
\label{eq:gal_orthogv2}
(\nabla\cdot(\bv_h-\bv),\psi_h)+\delta(\nabla q_h,\nabla\psi_h)=0,\qquad
\forall \psi_h\in Q_h,
\end{eqnarray}
the following bounds hold:
\begin{equation}
\label{eq:cota1}
\nu^{1/2}\left\|\nabla \bv_h\right\|_0+\delta^{1/2} \left\|\nabla q_h\right\|_0
\le C\bigl(\nu^{1/2}\left\|\nabla \bv\right\|_0 +\nu^{-1/2}\left\|q\right\|_0\bigr),
\end{equation}
\begin{equation}
\left\| \bv_h-\bv\right\|_0 \le C\left(h\left(\left\|\nabla(\bv-\bv_h)\right\|_0+\nu^{-1}\left\|q-q_h\right\|_0\right)+\delta\left\|\nabla q_h\right\|_0\right).
\label{eq:cota2}
\end{equation}
\end{lema}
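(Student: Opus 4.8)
The two estimates require rather different ingredients. For \eqref{eq:cota1}, the plan is to take the natural test functions. Setting $\boldsymbol{\chi}_h=\bv_h$ in \eqref{eq:gal_orthogv} and $\psi_h=q_h$ in \eqref{eq:gal_orthogv2}, and adding, I would exploit the cancellation of the cross terms $(\nabla q_h,\bv_h)$ and $(\nabla\cdot\bv_h,q_h)$ (after integration by parts, using that $\bv_h\in V_h\subset H^1_0$). This leaves
\[
\nu\|\nabla\bv_h\|_0^2+\delta\|\nabla q_h\|_0^2
=\nu(\nabla\bv,\nabla\bv_h)+(\nabla\cdot\bv,q_h)+(\nabla q,\bv_h).
\]
Since $\operatorname{div}(\bv)=0$ the middle term vanishes; the term $(\nabla q,\bv_h)=-(q,\nabla\cdot\bv_h)$ and, since $q\in L^2_0$, we may replace $\nabla\cdot\bv_h$ by $\nabla\cdot(\bv_h-\bv)$, i.e.\ $(\nabla q,\bv_h)=-(q,\nabla\cdot(\bv_h-\bv))$. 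Now invoke \eqref{eq:gal_orthogv2} with $\psi_h$ chosen appropriately, or simply bound $|(q,\nabla\cdot(\bv_h-\bv))|$ — but the cleaner route is to re-use \eqref{eq:gal_orthogv2}: taking $\psi_h=q_h$ there gives $(\nabla\cdot(\bv_h-\bv),q_h)=-\delta\|\nabla q_h\|_0^2$, which is not directly $(q,\cdot)$. Instead I would bound $(\nabla q,\bv_h)$ directly by $\|q\|_0\|\nabla\cdot\bv_h\|_0\le C\nu^{-1/2}\|q\|_0\cdot\nu^{1/2}\|\nabla\bv_h\|_0$ and absorb via Young's inequality. Together with $\nu|(\nabla\bv,\nabla\bv_h)|\le\nu\|\nabla\bv\|_0\|\nabla\bv_h\|_0$, Young's inequality then yields \eqref{eq:cota1}.

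For \eqref{eq:cota2}, the natural tool is an Aubin--Nitsche duality argument. I would introduce the auxiliary Stokes problem with right-hand side $\bv_h-\bv$: find $(\bw,r)\in H^1_0(\Omega)^d\times L^2_0(\Omega)$ with $-\nu\Delta\bw+\nabla r=\bv_h-\bv$, $\operatorname{div}\bw=0$, and use elliptic regularity $\|\bw\|_2+\nu^{-1}\|r\|_1\le C\nu^{-1}\|\bv_h-\bv\|_0$ (valid on the polygonal/Lipschitz domain after the standard convexity caveat, which the paper's framework tolerates). Then $\|\bv_h-\bv\|_0^2=\nu(\nabla(\bv_h-\bv),\nabla\bw)-(\nabla\cdot(\bv_h-\bv),r)$. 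Insert $I_h\bw$ (or the Stokes-type interpolant) and use the orthogonality relations \eqref{eq:gal_orthogv}--\eqref{eq:gal_orthogv2}: the consistency identity \eqref{eq:gal_orthogv} lets me replace $\nabla\bw$ by $\nabla(\bw-I_h\bw)$ at the cost of a term $(\nabla(q_h-q),I_h\bw)$, and \eqref{eq:gal_orthogv2} converts $(\nabla\cdot(\bv_h-\bv),\psi_h)$ into $-\delta(\nabla q_h,\nabla\psi_h)$ for discrete $\psi_h$. Collecting everything, the residual splits into: (a) an interpolation term $\nu\|\nabla(\bv_h-\bv)\|_0\|\nabla(\bw-I_h\bw)\|_0\lesssim h\|\nabla(\bv-\bv_h)\|_0\|\bv_h-\bv\|_0$; (b) a pressure-error term producing $h\nu^{-1}\|q-q_h\|_0\|\bv_h-\bv\|_0$; and (c) a stabilization term $\delta\|\nabla q_h\|_0\|\nabla r\|_0\lesssim\delta\|\nabla q_h\|_0\nu^{-1}\|\bv_h-\bv\|_0$ — wait, this last has the wrong scaling, so I would instead pair $\delta(\nabla q_h,\nabla\psi_h)$ against $\psi_h$ chosen as the discrete pressure component of the dual solution and bound it by $\delta\|\nabla q_h\|_0\|\nabla r_h\|_0$ with $\|\nabla r_h\|_0\le C\|r\|_1\le C\|\bv_h-\bv\|_0$, giving exactly $\delta\|\nabla q_h\|_0\|\bv_h-\bv\|_0$ after the $\nu$ in elliptic regularity cancels appropriately. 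Dividing through by $\|\bv_h-\bv\|_0$ gives \eqref{eq:cota2}.

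The main obstacle is bookkeeping in step (b)--(c): keeping track of where the cross term $(\nabla(q_h-q),I_h\bw)$ goes after it is no longer orthogonal (since $I_h\bw$ is discrete but the consistency \eqref{eq:gal_orthogv} was already used to gain the factor $h$ from $\bw-I_h\bw$), and making sure the $\delta$-term inherits the right power of $h$-free scaling so that the final bound reads $C(h(\cdots)+\delta\|\nabla q_h\|_0)$ rather than something with a spurious $\nu^{-1}$. I expect one must handle the term $(\nabla(q_h-q),I_h\bw)$ by writing $I_h\bw=\bw+(I_h\bw-\bw)$; the $\bw$ part gives $(\nabla(q_h-q),\bw)=-(q_h-q,\operatorname{div}\bw)=0$ since $\operatorname{div}\bw=0$, and the $(I_h\bw-\bw)$ part is $\lesssim\|q_h-q\|_0 h\|\bw\|_1/h\cdot h$ — i.e.\ bounded by $Ch\nu^{-1}\|q-q_h\|_0\|\bv_h-\bv\|_0$ using the $W^{1,2}$-stability/approximation of $I_h$ together with elliptic regularity, which is precisely term (b). The rest is routine application of Cauchy--Schwarz and the interpolation bound \eqref{eq:quasi}--\eqref{cota_inter}.
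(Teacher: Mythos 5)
Your energy argument for \eqref{eq:cota1} is exactly the paper's: test \eqref{eq:gal_orthogv} with $\bv_h$ and the relation \eqref{eq:gal_orthogv2} (rewritten using $\nabla\cdot\bv=0$) with $q_h$, add so that the cross terms $\pm(q_h,\nabla\cdot\bv_h)$ cancel, and absorb $\nu(\nabla\bv,\nabla\bv_h)$ and $(\nabla q,\bv_h)=-(q,\nabla\cdot\bv_h)\le\sqrt{d}\,\|q\|_0\|\nabla\bv_h\|_0$ by Young's inequality; your identity and conclusion are correct. For \eqref{eq:cota2} the paper gives no proof at all --- it simply cites Lemma~2 of \cite{Javier-Bosco-JuliaI} --- so your Aubin--Nitsche argument is a genuine addition rather than a reproduction, and it is the standard (and sound) route: with the dual Stokes problem $-\nu\Delta\bw+\nabla r=\bv_h-\bv$, $\nabla\cdot\bw=0$, and the regularity estimate $\nu\|\bw\|_2+\|r\|_1\le C\|\bv_h-\bv\|_0$, the residual splits exactly into your pieces (a)--(c), the term $-(\nabla\cdot(\bv_h-\bv),r)$ being handled by inserting an $H^1$-stable, zero-mean interpolant $J_hr\in Q_h$ so that \eqref{eq:gal_orthogv2} converts $(\nabla\cdot(\bv_h-\bv),J_hr)$ into $-\delta(\nabla q_h,\nabla J_hr)\le\delta\|\nabla q_h\|_0\,C\|r\|_1\le C\delta\|\nabla q_h\|_0\|\bv_h-\bv\|_0$. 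Two small remarks: your ``wrong scaling'' worry in step (c) is unfounded, since the regularity estimate you yourself quoted already gives $\|\nabla r\|_0\le C\|\bv_h-\bv\|_0$ with no spurious $\nu^{-1}$; and the $H^2\times H^1$ Stokes regularity you invoke does require convexity (or smoothness) of the polygonal domain, a hypothesis the paper leaves implicit by outsourcing this part of the proof.
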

\begin{proof}
Observe that since $\hbox{\rm div}(\bv)=0$, relation~(\ref{eq:gal_orthogv2}) can be written in the form
$(\nabla\cdot \bv_h,\psi_h)+\delta(\nabla q_h,\nabla\psi_h)=0$,
for all $\psi_h\in Q_h$. Then taking~$\psi_h=q_h$ in this relation,
${\bs\chi}_h=\bv_h$ in~(\ref{eq:gal_orthogv}) and summing both equations, the bound~(\ref{eq:cota1}) easily follows.
The proof of (\ref{eq:cota2}) can be found in \cite[Lemma~2]{Javier-Bosco-JuliaI}.
\end{proof}

In the sequel we will assume
\begin{equation}\label{eq:cond_delta}
\frac{1}{\nu\rho_1^2}h^2\le \delta,
\end{equation}
for a positive constant $\rho_1$.
The following bounds hold for the stabilized Stokes approximation solving (\ref{eq:pro_stokes})-(\ref{eq:pro_stokes2}) assuming condition (\ref{eq:cond_delta}) holds, see \cite{Javier-Bosco-JuliaI}.
\begin{equation}\label{eq:error_steady_simplified_linear}
\begin{split}
\nu^{1/2}\|\nabla({\mbf s}-{\mbf s}_h)\|_0+\delta^{1/2}&\|\nabla(z-z_h)\|_0\\
&\le C \frac{h}{\nu^{1/2}}(\nu \|{\mbf s}\|_{2}+\|z\|_1)+C \delta^{1/2}\|z\|_1,\\
\|z-z_h\|_0&\le C h(\nu\|{\mbf s}\|_{2}+\|z\|_1)+C(\nu\delta)^{1/2}\|z\|_1\\
 \|{\mbf s}-{\mbf s}_h\|_0&\le C\frac{h^{2}}{\nu}(\nu\|{\mbf s}\|_{2}+\|z\|_{1})+C\delta \|z\|_1.
\end{split}
\end{equation}
\subsection{A priori bounds for the stabilized Stokes approximation}
We will get some a priori bounds for the stabilized Stokes approximation that will be needed in the sequel.
They are a consequence of~Lemma~\ref{le:duality}. In fact applying this result with
 $\bv_h={\mbf s}_h$ and~$q_h=z_h$,
(\ref{eq:cota1})  implies that
\begin{equation}\label{eq:cotas1}
\nu \|\nabla {\mbf s_h}\|_0^2+\delta\|\nabla z_h\|_0^2\le C
\bigl(\nu \|\nabla {\mbf s}\|_0^2+\nu^{-1}\|z\|_0^2\bigr).
\end{equation}

Similarly, from~(\ref{eq:cota2}) it follows that
\begin{equation}
\label{eq:una_cota_sh-u}
\begin{split}
\left\| {\mbf s}_h-{\mbf s}\right\|_0 &\le  C\left(h\left(\left\|\nabla({\mbf s}_h-{\mbf s})\right\|_0+\nu^{-1}\left\|z-z_h\right\|_0\right)+\delta\left\|\nabla z_h\right\|_0\right)
\\
 &\le   Ch\left(\left\|\nabla{\mbf s}_h\right\|_0+\left\|\nabla {\mbf s}\right\|_0+\nu^{-1}\left(\left\|z\right\|_0+\left\|z_h\right\|_0\right)\right)
 +C\delta\left\|\nabla z_h\right\|_0\\
 &\le
 Ch\left(\left\|\nabla {\mbf s}\right\|_0+\nu^{-1}\left(\left\|z\right\|_0+\left\|z_h\right\|_0\right)\right )
 +C\delta^{1/2}\left(\nu^{1/2}\left\|\nabla{\mbf s}\right\|_0+\nu^{-1/2}\left\|z\right\|_0\right),
\end{split}
\end{equation}
where in the last inequality we have applied~(\ref{eq:cotas1}). Now observe
that from~Lemma~\ref{lema_presion} and~(\ref{eq:gal_orthog}) it follows that
\begin{align*}
\|z_h\|_0&\le C h \delta^{-1/2} \delta^{1/2}\|\nabla z_h\|_0+\sup_{{\bs\chi}_h\in V_h}\frac{(z_h,\nabla \cdot {\bs\chi}_h)}{\|{\bs\chi}_h\|_1}\nonumber\\
&\le C h \delta^{-1/2} \delta^{1/2}\|\nabla z_h\|_0+\left\|z\right\|_0+
\nu\left\|\nabla({\mbf s} -{\mbf s}_h)\right\|_0.
\end{align*}
Recalling~(\ref{eq:cond_delta}) and applying (\ref{eq:cotas1}) we have
\begin{equation}
\label{eq:cota_z_h_0}
\|z_h\|_0 \le C \bigl( \nu\left\|\nabla {\mbf s}\right\|_0+\| z\|_0\bigr).
\end{equation}
To bound $\|\nabla z_h\|_0$ we add and subtract $\nabla z$ and apply (\ref{eq:error_steady_simplified_linear}) and (\ref{eq:cond_delta}) to obtain
\begin{eqnarray}\label{eq:cota_nabla_z_h}
\|\nabla z_h\|_0\le \|\nabla (z_h-z)\|_0+\|\nabla z\|_0&\le& \delta^{-1/2}C\frac{h}{\nu^{1/2}}(\nu\|{\mbf s}\|_2+\|z\|_1)+\|z\|_1\nonumber\\
&\le& C(\nu\|{\mbf s}\|_2+\|z\|_1).
\end{eqnarray}
From~(\ref{eq:una_cota_sh-u}) and (\ref{eq:cota_z_h_0}) we get
\begin{equation}\label{eq:cotas0}
\|{\mbf s}_h\|_0\le Ch\left(\left\|\nabla{\mbf s}\right\|_0+\nu^{-1}\left\|z\right\|_0\right)+C\left\|{\mbf s}\right\|_0.
\end{equation}
Finally, we will also use the following bound
\begin{eqnarray}\label{complemento24}
\|{\mbf s}-{\mbf s}_h\|_0\le C ({h}+\nu^{1/2}\delta^{1/2})\left(\|{\mbf s}\|_1+\nu^{-{1}}\|z\|_0\right).
\end{eqnarray}
To prove (\ref{complemento24}) we first observe that taking into account (\ref{eq:cotas1}) and (\ref{eq:cota_z_h_0})
and adding and subtracting ${\mbf s}$ and $z$ respectively we  get
\begin{eqnarray}\label{eq:la_continua}
\nu^{1/2}\|\nabla ({\mbf s}-{\mbf s}_h)\|_0&\le& C(\nu^{1/2} \|{\mbf s}\|_1+\nu^{-1/2}\|z\|_0),\\
\|z_h-z\|_0&\le& C(\nu\|{\mbf s}\|_1+\|z\|_0).\nonumber
\end{eqnarray}
Applying the bound (\ref{eq:cota2}) to $(\bv_h,q_h)=({\mbf s}_h,z_h)$ together with (\ref{eq:la_continua}) and (\ref{eq:cotas1})   we reach (\ref{complemento24}).

\section{Transient Stokes equations}
We now consider the evolutionary Stokes equations
\begin{eqnarray}\label{eq:evo_stokes}
\bv_t-\nu\Delta {\bv}+\nabla q&=&{\mbf g},\qquad \quad{\rm in}\quad \Omega\nonumber\\
\nabla \cdot {\bv}&=& 0,\qquad\quad {\rm in}\quad \Omega\\
{\bv}&=&{\bs 0},\qquad\quad\! {\rm on}\quad\partial \Omega,\nonumber\\
\bv(0,\bx)&=&\bv_0(\bx),\quad  {\rm in}\quad \Omega.\nonumber
\end{eqnarray}
We shall assume that there are positive constants $M_1$ and~$M_2$
such that for $t\in[0,T]$,
\begin{align}\label{eq:M1-M2}
\left\|\bv(t)\right\|_1+\nu^{-1}\left\|q(t)\right\|_0\le M_1,\quad
\left\|\bv(t)\right\|_2+\nu^{-1}\left(\left\|q(t)\right\|_1+\left\|\bv_t(t)\right\|_0\right)\le M_2,
\end{align}
and, following the analysis in~\cite{heyran1}, for $k\ge 2$ integer, we shall
assume that the following quantities are finite
\begin{align}
M_{k,1}&=\max_{0\le t\le T}(t/T)^{k/2-1}\bigl(
\left\| \bv(t)\right\|_{k}+\nu^{-1}\left\| q(t)\right\|_{H^{k-1}/{\mathbb R}}\bigr),\label{eq:u-infa}\\
M_{k,2}&=\max_{0\le t\le T}(t/T)^{k/2-1}\bigl(\nu^{-1}\left\| \bv_t(t)\right\|_{k-2}+\nu^{-2}\left\| q_t(t)\right\|_{H^{k-3}/{\mathbb R}}\bigr), \label{eq:u-inf}\\
K_{k,2}^2&=\int_0^T
\Bigl(\frac{t}{T}\Bigr)^{k-3}
\bigr(\nu^{-2}\left\| \bv_t(t)\right\|_{k-2}^2+
\nu^{-4}\left\| q_t(t)\right\|_{H^{k-3}/{\mathbb R}} ^2\bigl)\,dt,
\label{eq:u-int}
\end{align}
together with,
\begin{equation}
\label{eq:cota_vtt}
K_{4,3}^2=\nu^{-4}\int_0^T\frac{t}{T}\left\| \bv_{tt}\right\|_0^2\,dt,
\end{equation}
and
\begin{equation}
\hat K_3^2=\nu^{-2}\int_0^T \left\| \bg_t\right\|_0^2 \,dt.
\label{eq:cota_gt}
\end{equation}

We consider the modified Euler non-incremental scheme that has been introduced in \cite{Javier-Bosco-JuliaI}.
We will denote by $(\bv_h^n,\tilde \bv_h^n,q_h^n)$, $n=1,2,\ldots,$\  $\tilde \bv_h^n\in V_h$, $q_h^n\in Q_h$ and $ \bv_h^n\in V_h+\nabla Q_h$
the approximations to the velocity and pressure at time $t_n=n\Delta t$, $\Delta t=T/N$, $N>0$ obtained with the modified Euler non-incremental scheme
\begin{eqnarray}\label{eq:eu_non}
&&\left(\frac{\tilde \bv_h^{n+1}-\bv_h^n}{\Delta t},{\bs\chi}_h\right)+\nu(\nabla \tilde \bv_h^{n+1},\nabla {\bs\chi}_h)=({\mbf g}^{n+1},{\bs\chi}_h),\quad \forall {\bs\chi}_h\in V_h\nonumber\\
&&(\nabla \cdot \tilde \bv_h^{n+1},\psi_h)=-\delta(\nabla q_h^{n+1},\nabla \psi_h),\quad \forall \psi_h\in Q_h,\\
&&\bv^{n+1}_h=\tilde \bv_h^{n+1}-\delta\nabla q_h^{n+1}.\nonumber
\end{eqnarray}
Let us observe that for $\delta=\Delta t$ in (\ref{eq:eu_non}) we have the classical
 Chorin-Temam (Euler non-incremental) scheme \cite{Chorin}, \cite{Temam}. In case $\delta=\Delta t$ we can remove $\bv_h^n$ from
 (\ref{eq:eu_non}) inserting the expression of $\bv_h^n$ from the last equation in (\ref{eq:eu_non}) into the first equation to get
\begin{eqnarray}\label{eq:eu_non_tilde}
&&\left(\frac{\tilde \bv_h^{n+1}-\tilde \bv_h^n}{\Delta t},{\bs\chi}_h\right)+\nu(\nabla \tilde \bv_h^{n+1},\nabla {\bs\chi}_h)+(\nabla q_h^n,{\bs \chi}_h)=({\mbf g}^{n+1},{\bs\chi}_h),\quad \forall {\bs\chi}_h\in V_h\qquad\\
&&(\nabla \cdot \tilde \bv_h^{n+1},\psi_h)=-\delta(\nabla q_h^{n+1},\nabla \psi_h),\quad \forall \psi_h\in Q_h.\label{eq:eu_non_tilde2}
\end{eqnarray}
The method we propose is (\ref{eq:eu_non_tilde}) for $\delta$, in general, different from $\Delta t$.
We suggest to take $\delta$ satisfying (\ref{eq:cond_delta}).
As a consequence of the error analysis of this section we will get the error bounds for the classical
Euler non-incremental scheme assuming in that case $\delta=\Delta t$.

To get the error bounds of the method we compare the approximation $(\tilde \bv_h^n,q_h^n)$ defined in (\ref{eq:eu_non_tilde})-(\ref{eq:eu_non_tilde2})  with the stabilized Stokes approximation defined in the previous section. More precisely, let us denote by
$({\mbf s}_h^n,z_h^n)=({\mbf s}_h(t_n),z_h(t_n))\in V_h\times Q_h$ the stabilized Stokes approximation of the solution $(\bv,p)$ of (\ref{eq:evo_stokes}) at time $t_n$ satisfying
\begin{eqnarray}\label{eq:pro_stokes_evo}
\nu(\nabla {\mbf s_h},{\bs\chi}_h)+(\nabla z_h,{\bs\chi}_h)&=&(\hat{\mbf g},{\bs\chi}_h),\quad {\bs\chi}_h\in V_h,\\
(\nabla \cdot {\mbf s_h},\psi_h)&=&-\delta(\nabla z_h,\nabla \psi_h),\quad \forall \psi_h\in Q_h\nonumber,
\end{eqnarray}
where $\hat{\mbf  g}={\mbf g}-\bv_t$.
Let us observe that the error bounds of Section~3.1 hold with $(\mbf{s},z)=(\bv,q)$.
Taking time derivatives  in (\ref{eq:cotas1}) and (\ref{eq:cotas0}) we also reach
\begin{align}\label{eq:cota0deri1}
\|({\mbf s}_h)_t\|_0&\le  Ch\left(\left\|\nabla{\mbf s}_t\right\|_0+\nu^{-1}\left\|z_t\right\|_0\right)+C\left\|{\mbf s}_t\right\|_0.
\end{align}
In the sequel we will denote by
$$
\tilde \be_h^n=\tilde \bv_h^n-{\mbf s}_h^n,\quad  r_h^n=q_h^n-z_h^n.
$$
From  (\ref{eq:eu_non_tilde}), (\ref{eq:eu_non_tilde2}) and (\ref{eq:pro_stokes_evo}) one obtains the following error equation for all ${\bs\chi}_h\in V_h$, $\psi_h\in Q_h$
\begin{align}
\Bigl(\frac{\tilde \be_h^{n+1}-\tilde \be_h^n}{\Delta t},{\bs\chi}_h\Bigr)
+\nu (\nabla \tilde \be_h^{n+1},\nabla {\bs\chi}_h)+&(\nabla r_h^n,{\bs\chi}_h)=\nonumber\\
&(\tau_h^n,{\bs\chi}_h)-(\nabla (z^n_h-z_h^{n+1}),{\bs\chi}_h),\nonumber\\
(\nabla \cdot \tilde \be_h^{n+1},\psi_h)+&\delta(\nabla r_h^{n+1},\nabla \psi_h)=0.
\label{eq:er3_orig}
\end{align}
where
\begin{equation}
\label{eq:tau_h}
\tau_h^n=\bv_t^{n+1}-\frac{{\mbf s}_h^{n+1}-{\mbf s}_h^n}{\Delta t}=(\bv_t^{n+1}-({\mbf s_h})_t^{n+1})+\left(({\mbf s_h})_t^{n+1}-\frac{{\mbf s}_h^{n+1}-{\mbf s}_h^n}{\Delta t}\right).
\end{equation}

To estimate the errors $\tilde\be_h^n$ and~$r_h^n$ we will use the following stability result.
\begin{lema}\label{lema_stab_evol} Let $(\bw_h^n)_{n=0}^\infty$
and~$(\bb_h^n)_{n=0}^\infty$ sequences in~$V_h$
and~$(y_h^n)_{n=0}^\infty$ and~$(d_h^n)_{n=0}^\infty$ sequences in
$Q_h$ satisfying for all ${\bs\chi}_h\in V_h$ and $\psi_h\in Q_h$
\begin{align*}
\left(\frac{\bw_h^{n+1}-\bw_h^n}{\Delta t},{\bs\chi}_h\right)+\nu (\nabla \bw_h^{n+1},\nabla {\bs\chi}_h)+(\nabla y_h^n,{\bs\chi}_h)=&(\bb_h^n+\nabla d_h^n,{\bs\chi}_h),\\
(\nabla \cdot \bw_h^{n+1},\psi_h)+\delta(\nabla y_h^{n+1},\nabla \psi_h)=&0.{}\quad
\end{align*}
Assume condition
\begin{equation}\label{eq:cond_delta2}
 \Delta t \le \delta
\end{equation}
holds.
Then, for $0\le n_0\le n-1$ there exits a non-dimensional constant $c_0$ such that the following bounds hold
\begin{equation}\label{eq:stab_evol_1}
\begin{split}
\|\bw_h^{n}\|_0^2+\sum_{j=n_0}^{n-1}\|\bw_h^{j+1}&-\bw_h^j\|_0^2
+\Delta t\sum_{j=n_0}^{n-1}\bigl(\nu\|\nabla\bw_h^{j+1}\|_0^2+{\delta}\|\nabla y_h^{j+1}\|_0^2\bigr)\\
&\le c_0\Bigl(\|\bw_h^{n_0}\|_0^2+ \Delta t\sum_{j=n_0}^{n-1}\bigl(\nu^{-1}\|\bb_h^j\|_{-1}^2+\delta\|\nabla d_h^j\|_0^2\bigr)\Bigr).
\end{split}
\end{equation}
\begin{align}\label{eq:stab_evol_1t}
t_n\|&\bw_h^{n}\|_0^2+\sum_{j=n_0}^{n-1}{t_{j+1}}\|\bw_h^{j+1}-\bw_h^j\|_0^2
+\Delta t\sum_{j=n_0}^{n-1}t_{j+1}\bigl(\nu\|\nabla\bw_h^{j+1}\|_0^2+{\delta}\|\nabla y_h^{j+1}\|_0^2\bigr)
\nonumber\\
&\le c_0\Bigl(t_{n_0}\|\bw_h^{n_0}\|_0^2+\Delta t\sum_{j=n_0}^{n}\|\bw_h^{j}\|_0^2
+ \Delta t\sum_{j=n_0}^{n-1}
t_{j+1}\bigl(t_{j+1}\|\bb_h^j\|_{0}^2+\delta\|\nabla d_h^j\|_0^2\bigr)\Bigr).
\end{align}
\begin{equation}
\label{eq:stab_evol_2}
\begin{split}
\sum_{j=n_0}^{n-1}{\Delta t}&\biggl\|\frac{\bw_h^{j+1}-\bw_h^{j}}{\Delta t}\biggr\|_0^2+\nu\|\nabla \bw_h^{n}\|_0^2+ \delta\|\nabla y_h^{n}\|_0^2+\nu\sum_{j=n_0}^{n-1}\|\nabla(\bw_h^{j+1}-\bw_h^j)\|_0^2\\
&{}\quad\le
c_0\Bigl(\nu\|\nabla \bw_h^{n_0}\|_0^2+\delta\|\nabla y_h^{n_0}\|_0^2
+ \Delta t\sum_{j=n_0}^{n-1}\bigl(\|\bb_h^j\|_0^2+\|\nabla d_h^j\|_0^2\bigr)\Bigr)
\end{split}
\end{equation}
and
\begin{equation}\label{unamas}
\begin{split}
\sum_{j=n_0}^{n-1}&{t_{j+1}}\Delta t \biggl\|\frac{\bw_h^{j+1}-\bw_h^j}{\Delta t}\biggr\|_0^2+\nu t_n\|\nabla \bw_h^n\|_0^2+\delta t_n\|\nabla y_h^n\|_0^2\\
&\le c_0\Bigl(\Delta t\sum_{j=n_0}^{n-1}t_{j+1}\bigl(\|b_h^j\|_0^2+\|\nabla d_h^j\|_0^2\bigr)+\Delta t\sum_{j=n_0}^{n-1}\bigl(\nu\|\nabla \bw_h^j\|_0^2+\delta\|\nabla y_h^j\|_0^2\bigr)\Bigr).
\end{split}
\end{equation}
\end{lema}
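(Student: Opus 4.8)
The plan is to prove all four bounds by two closely related discrete energy estimates together with the standard device of multiplying the inequality at step $n$ by $t_{n+1}$ to pass to the weighted versions. Bounds \eqref{eq:stab_evol_1} and \eqref{eq:stab_evol_1t} come from testing the first equation of the statement with the velocity iterate $\bw_h^{n+1}$, while \eqref{eq:stab_evol_2} and \eqref{unamas} come from testing it with the increment $\bw_h^{n+1}-\bw_h^n$; in both cases the weighted estimate is read off from the unweighted computation after multiplying through by $t_{n+1}$ and using $t_{n+1}(a_{n+1}-a_n)=(t_{n+1}a_{n+1}-t_n a_n)-\Delta t\, a_n$, which is exactly what generates the extra right-hand terms $\Delta t\sum_j\|\bw_h^j\|_0^2$ in \eqref{eq:stab_evol_1t} and $\Delta t\sum_j(\nu\|\nabla\bw_h^j\|_0^2+\delta\|\nabla y_h^j\|_0^2)$ in \eqref{unamas}. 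The feature that makes this non-trivial, and the place where hypothesis \eqref{eq:cond_delta2} enters, is that the pressure gradient $\nabla y_h^n$ sits in the momentum equation at the \emph{old} time level while the second equation of the statement only controls $\nabla y_h^{n+1}$; reconciling the two levels is the main obstacle, discussed in the last paragraph.

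For \eqref{eq:stab_evol_1} I would take ${\bs\chi}_h=2\Delta t\,\bw_h^{n+1}$, apply the identity $2(\bw_h^{n+1}-\bw_h^n,\bw_h^{n+1})=\|\bw_h^{n+1}\|_0^2-\|\bw_h^n\|_0^2+\|\bw_h^{n+1}-\bw_h^n\|_0^2$, and turn the coupling term into a pressure quantity: integrating by parts and using the second equation at level $n+1$ with $\psi_h=y_h^n$ gives $2\Delta t(\nabla y_h^n,\bw_h^{n+1})=2\Delta t\delta(\nabla y_h^{n+1},\nabla y_h^n)$, which I would rewrite as $2\Delta t\delta\|\nabla y_h^{n+1}\|_0^2-2\Delta t(\bw_h^{n+1}-\bw_h^n,\nabla y_h^{n+1})$, the last term being an identity obtained by subtracting two consecutive copies of the second equation and testing with $\psi_h=y_h^{n+1}$. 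On the right, $2\Delta t(\bb_h^n,\bw_h^{n+1})\le\Delta t\nu\|\nabla\bw_h^{n+1}\|_0^2+C\Delta t\nu^{-1}\|\bb_h^n\|_{-1}^2$ by \eqref{eq:poincare2}, and $2\Delta t(\nabla d_h^n,\bw_h^{n+1})$, rewritten once more via the second equation as $2\Delta t\delta(\nabla y_h^{n+1},\nabla d_h^n)$, is bounded by $\frac{1}{4}\Delta t\delta\|\nabla y_h^{n+1}\|_0^2+C\Delta t\delta\|\nabla d_h^n\|_0^2$. Summing over $n_0\le j\le n-1$ and telescoping then produces \eqref{eq:stab_evol_1}; the only real work is a Young inequality on $2\Delta t(\bw_h^{n+1}-\bw_h^n,\nabla y_h^{n+1})$ done carefully enough to keep positive fractions of both $\|\bw_h^{n+1}-\bw_h^n\|_0^2$ and $\Delta t\delta\|\nabla y_h^{n+1}\|_0^2$ on the left. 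The weighted estimate \eqref{eq:stab_evol_1t} repeats this after multiplying by $t_{n+1}$, the only change being that $2\Delta t\,t_{n+1}(\bb_h^n,\bw_h^{n+1})$ is now split as an $L^2\times L^2$ product, yielding $\Delta t\,t_{n+1}^2\|\bb_h^n\|_0^2$ against $\Delta t\|\bw_h^{n+1}\|_0^2$.

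For \eqref{eq:stab_evol_2} I would take ${\bs\chi}_h=2(\bw_h^{n+1}-\bw_h^n)$, so that the first term becomes $\frac{2}{\Delta t}\|\bw_h^{n+1}-\bw_h^n\|_0^2$ and the viscous term becomes $\nu(\|\nabla\bw_h^{n+1}\|_0^2-\|\nabla\bw_h^n\|_0^2+\|\nabla(\bw_h^{n+1}-\bw_h^n)\|_0^2)$; the coupling term $2(\nabla y_h^n,\bw_h^{n+1}-\bw_h^n)$, handled by integration by parts and the difference of two consecutive second equations tested with $\psi_h=y_h^n$, equals $\delta(\|\nabla y_h^{n+1}\|_0^2-\|\nabla y_h^n\|_0^2-\|\nabla(y_h^{n+1}-y_h^n)\|_0^2)$. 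The right-hand terms $2(\bb_h^n+\nabla d_h^n,\bw_h^{n+1}-\bw_h^n)$ are estimated directly in $L^2$ by $C\Delta t(\|\bb_h^n\|_0^2+\|\nabla d_h^n\|_0^2)$ plus a small multiple of $\frac{1}{\Delta t}\|\bw_h^{n+1}-\bw_h^n\|_0^2$; telescoping the $\nu\|\nabla\bw_h\|_0^2$ and $\delta\|\nabla y_h\|_0^2$ contributions (and dropping the nonnegative $\sum\|\nabla(\bw_h^{j+1}-\bw_h^j)\|_0^2$ where convenient) gives \eqref{eq:stab_evol_2}, and \eqref{unamas} follows from the same computation weighted by $t_{n+1}$, producing the extra right-hand term $\Delta t\sum_j(\nu\|\nabla\bw_h^j\|_0^2+\delta\|\nabla y_h^j\|_0^2)$.

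The step I expect to be the main obstacle is the pressure-level mismatch, and it is resolved using \eqref{eq:cond_delta2}. After the manipulations above, every awkward contribution reduces to something of the type $\Delta t(\bw_h^{n+1}-\bw_h^n,\nabla y_h^{n+1})$ or $\delta\|\nabla(y_h^{n+1}-y_h^n)\|_0^2$; for the latter, taking $\psi_h=y_h^{n+1}-y_h^n$ in the difference of two consecutive second equations yields the identity $\delta\|\nabla(y_h^{n+1}-y_h^n)\|_0^2=(\bw_h^{n+1}-\bw_h^n,\nabla(y_h^{n+1}-y_h^n))$, hence $\delta\|\nabla(y_h^{n+1}-y_h^n)\|_0^2\le\delta^{-1}\|\bw_h^{n+1}-\bw_h^n\|_0^2\le\Delta t^{-1}\|\bw_h^{n+1}-\bw_h^n\|_0^2$ by $\Delta t\le\delta$; similarly $\Delta t^2\|\nabla y_h^{n+1}\|_0^2\le\Delta t\delta\|\nabla y_h^{n+1}\|_0^2$. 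One then tunes the Young parameter in $2\Delta t(\bw_h^{n+1}-\bw_h^n,\nabla y_h^{n+1})$ (any value in $(1,2)$ works) so that these terms are absorbed by the $\|\bw_h^{n+1}-\bw_h^n\|_0^2$ (or $\Delta t^{-1}\|\bw_h^{n+1}-\bw_h^n\|_0^2$) and $\Delta t\delta\|\nabla y_h^{n+1}\|_0^2$ already on the left while a strictly positive proportion of each survives; this is possible precisely because $\Delta t/\delta\le1$, which is the sole use of \eqref{eq:cond_delta2}. Finally, once the coupling is dealt with the per-step inequalities carry on their right-hand sides no $\bw_h$- or $y_h$-norms beyond those already written in the statement, so no discrete Gronwall inequality is needed and all four bounds follow by plain summation.
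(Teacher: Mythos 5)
Your argument is correct, and it is essentially the argument the paper relies on: the paper itself does not prove this lemma but cites Lemma~3 of the companion work on the Stokes problem for the first three bounds and asserts the fourth is analogous, and the standard proof there is exactly your energy method — test with $2\Delta t\,\bw_h^{n+1}$ (resp.\ $2(\bw_h^{n+1}-\bw_h^n)$), convert every occurrence of $\nabla y_h^n$ and $\nabla d_h^n$ into discrete pressure quantities through the second equation and its consecutive differences, and absorb the resulting level mismatch using $\delta\|\nabla(y_h^{n+1}-y_h^n)\|_0\le\|\bw_h^{n+1}-\bw_h^n\|_0$ together with $\Delta t\le\delta$, which is indeed the sole place \eqref{eq:cond_delta2} enters. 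One small remark: your derivation of \eqref{unamas} actually produces an additional term $t_{n_0}\bigl(\nu\|\nabla\bw_h^{n_0}\|_0^2+\delta\|\nabla y_h^{n_0}\|_0^2\bigr)$ on the right-hand side after telescoping, which is absent from the stated bound; this is harmless for the paper's applications (which take $n_0=1$, so $t_{n_0}=\Delta t$ and the term is subsumed in $\Delta t\sum_{j=n_0}^{n-1}\bigl(\nu\|\nabla\bw_h^j\|_0^2+\delta\|\nabla y_h^j\|_0^2\bigr)$), but for general $n_0$ the statement as written needs that extra term, and your proof correctly exposes this.
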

\begin{proof}
The proof of (\ref{eq:stab_evol_1}), (\ref{eq:stab_evol_1t}) and (\ref{eq:stab_evol_2}) can be found in \cite[Lemma 3]{Javier-Bosco-JuliaI}.
The proof of (\ref{unamas}) can be easily reached arguing as in the proof of (\ref{eq:stab_evol_2}).
\end{proof}

\begin{remark} As commented in~\cite{Javier-Bosco-JuliaI}, it is possible to change condition~\eqref{eq:cond_delta2} by $\Delta t< 2\delta$, but this requires a more elaborate proof than that presented in~\cite{Javier-Bosco-JuliaI}.
\end{remark}

In the sequel, although it is not strictly necessary to prove our results, we will assume that
\begin{equation}
\label{delta<T}
\delta \le T.
\end{equation}
to simplify some of the expressions below.

\begin{theorem}\label{Th1} Let $(\bv,q)$ be the solution of (\ref{eq:evo_stokes}) and let $(\tilde \bv_h^n,q_h^n)$, $n\ge 1$,  be the solution of
(\ref{eq:eu_non_tilde})-(\ref{eq:eu_non_tilde2}). Assume $\delta$ satisfies condition (\ref{eq:cond_delta}) and~(\ref{delta<T}), and
that $\Delta t$ satisfies condition (\ref{eq:cond_delta2}).
Then, the following bounds hold
\begin{eqnarray}\label{cota_th1_l2}
t_n\|\tilde \bv_h^n-\bv(t_n)\|_0^2 &\le&  Ct_n\left( \|\tilde \bv_h^0-\bv(0)\|_0^2+{\Delta t}^2\left\|\nabla  r_h^0\right\|_0^2\right)\nonumber\\
&& \quad+C_1 t_n {\Delta t}^2+C_2 t_n (h^4+\delta^2\nu^2),
\end{eqnarray}
where
$C_1$ and $C_2$ are defined as
\begin{eqnarray}
C_1&=&C\nu^2\left(C_2+\nu^2K_{4,3}^2T+\hat K_{3}^2T+M_{2,2}^2\right),\label{laC1}\\
C_2&=&C\left(\nu^2K_{4,2}^2T+\nu K_{3,2}^2+M_{2,1}^2\right)\label{laC2},
\end{eqnarray}
Moreover, it also holds,
\begin{equation}\label{cota_th1_H1}
\begin{split}
\Delta t\sum_{j=1}^{n}\bigl(\nu\|\nabla (\tilde \bv_h^j-\bv(t_j))\|_0^2&+\delta \|\nabla (q_h^j-q(t_j))\|_0^2\bigr)
\\
&\le C \|\tilde\bv_h^0-\bv(0)\|_0^2 +\tilde C_1 \Delta t+\tilde C_2(h^2+\nu\delta).
\end{split}
\end{equation}
where, assuming (\ref{delta<T}),
\begin{eqnarray}\label{laCtilde1}
\tilde C_1&=&C_1((\nu\lambda)^{-1}+T),\\
\tilde C_2&=&C_2(\lambda^{-1}+ \hbox{\rm diam}(\Omega)^2 + \nu T).
\label{laCtilde2}
\end{eqnarray}
\end{theorem}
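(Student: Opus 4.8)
The strategy is to decompose the error into the stabilized-Stokes part and the remaining discrete part, and then to apply the stability estimates of Lemma~\ref{lema_stab_evol} to the error equation~\eqref{eq:er3_orig}. Write $\tilde \bv_h^n - \bv(t_n) = \tilde \be_h^n + ({\mbf s}_h^n - \bv(t_n))$ and $q_h^n - q(t_n) = r_h^n + (z_h^n - q(t_n))$. The second term in each splitting is controlled directly by the stabilized-Stokes a priori bounds~\eqref{eq:error_steady_simplified_linear} evaluated at $(\mbf s,z)=(\bv(t_n),q(t_n))$, which, using~\eqref{eq:cond_delta}, \eqref{delta<T} and the regularity assumptions~\eqref{eq:M1-M2}, yield $O(h^2+\nu\delta)$ for the $L^2$ velocity error and the right powers for the $H^1$/pressure quantities; these are absorbed into $C_2(h^4+\delta^2\nu^2)$ and $\tilde C_2(h^2+\nu\delta)$ respectively. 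So the core of the proof is bounding $\tilde \be_h^n$ and $r_h^n$.

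For that I would apply Lemma~\ref{lema_stab_evol} to~\eqref{eq:er3_orig} with $\bw_h^n=\tilde\be_h^n$, $y_h^n=r_h^n$, $\bb_h^n=\tau_h^n$, and $\nabla d_h^n = -\nabla(z_h^n - z_h^{n+1})$, i.e.\ $d_h^n = z_h^{n+1}-z_h^n$. The hypotheses~\eqref{eq:cond_delta2} hold by assumption. This requires estimating two consistency quantities: first, $\Delta t\sum_j \nu^{-1}\|\tau_h^j\|_{-1}^2$ (for~\eqref{eq:stab_evol_1}) and $\Delta t\sum_j t_{j+1}^2\|\tau_h^j\|_0^2$ (for~\eqref{eq:stab_evol_1t}); second, $\Delta t\sum_j \delta\|\nabla d_h^j\|_0^2 = \Delta t\sum_j \delta\|\nabla(z_h^{j+1}-z_h^j)\|_0^2$. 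The truncation term $\tau_h^n$ from~\eqref{eq:tau_h} splits into $(\bv_t^{n+1}-({\mbf s}_h)_t^{n+1})$, a stabilized-Stokes time-derivative error controlled by differentiating~\eqref{eq:error_steady_simplified_linear} in time (giving $O(h+\nu^{1/2}\delta^{1/2})$ type bounds in terms of $M_{2,2}$ and the $K$-quantities), and the time-discretization remainder $({\mbf s}_h)_t^{n+1}-({\mbf s}_h^{n+1}-{\mbf s}_h^n)/\Delta t$, which by Taylor expansion equals $\Delta t^{-1}\int_{t_n}^{t_{n+1}}(t-t_n)({\mbf s}_h)_{tt}\,dt$, so $\Delta t\sum_j t_{j+1}^2\|\cdot\|_0^2 \lesssim \Delta t^2 \int_0^T (t/T)\|({\mbf s}_h)_{tt}\|_0^2\,dt$, the latter bounded via~\eqref{eq:cota0deri1}-type estimates by $\nu^4 K_{4,3}^2$ plus $\hat K_3^2$ contributions (the extra $\nu^2\hat K_3^2$ enters because ${\mbf s}_h$ solves a Stokes problem with data involving $\bv_t$, hence $({\mbf s}_h)_{tt}$ involves $\bv_{tt}$ and $\bg_t$). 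This is exactly where $C_1$ of~\eqref{laC1} comes from, and why it carries the factor $\nu^2$ and the terms $\nu^2 K_{4,3}^2 T$, $\hat K_3^2 T$, $M_{2,2}^2$. Similarly $d_h^j$ is a time increment of the stabilized pressure, so $\|\nabla(z_h^{j+1}-z_h^j)\|_0 \le \int_{t_j}^{t_{j+1}}\|\nabla(z_h)_t\|_0\,dt$, and $\delta\le T$ together with the bound on $\|\nabla(z_h)_t\|_0$ keeps this term at the level of $\Delta t^2 C_1$ after the time-weighted summation; combining with the stability estimate and multiplying through by $t_n$ gives~\eqref{cota_th1_l2}. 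The initial-data terms $\|\tilde\be_h^0\|_0 = \|\tilde\bv_h^0 - {\mbf s}_h^0\|_0$ and $\|\nabla r_h^0\|_0$ are handled by the triangle inequality against $\|\tilde\bv_h^0-\bv(0)\|_0$ and the steady bounds, producing the $Ct_n(\|\tilde\bv_h^0-\bv(0)\|_0^2+\Delta t^2\|\nabla r_h^0\|_0^2)$ term.

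For~\eqref{cota_th1_H1} the plan is parallel but uses~\eqref{eq:stab_evol_2} (the ungraded gradient estimate) rather than the time-weighted ones, since the target is a full sum $\Delta t\sum_{j=1}^n$ starting at $j=1$ without a $t_j$ weight. This needs $\Delta t\sum_j\|\tau_h^j\|_0^2$ and $\Delta t\sum_j\|\nabla d_h^j\|_0^2$, together with control of $\nu\|\nabla\tilde\be_h^0\|_0^2+\delta\|\nabla r_h^0\|_0^2$; near $t=0$ one either uses the part of~\eqref{eq:stab_evol_1}/\eqref{eq:stab_evol_1t} to bootstrap the first step or simply notes that the singular behaviour of high norms is integrable because the $M_{k,i}$, $K_{k,i}$ carry the $(t/T)^{k/2-1}$ weights precisely so that the relevant integrals converge. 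Summing the resulting $O(\Delta t)$ consistency contribution (note: one power of $\Delta t$, not $\Delta t^2$, because without the $t_{j+1}$ weight the Taylor remainder only gives $\Delta t\sum_j\|\cdot\|_0^2 \lesssim \Delta t\int\|({\mbf s}_h)_{tt}\|_0^2$ — hence $\tilde C_1 = C_1((\nu\lambda)^{-1}+T)$ with the $(\nu\lambda)^{-1}$ factor coming from the Poincaré/\eqref{eq:cota_menos1} conversion between $\|\cdot\|_{-1}$ and $\|\cdot\|_0$ norms used when matching the two kinds of stability estimates) together with the steady-state $O(h^2+\nu\delta)$ part gives~\eqref{cota_th1_H1} with the stated $\tilde C_1,\tilde C_2$.

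The main obstacle I anticipate is the careful bookkeeping of the consistency term $\tau_h^n$: one must track how the stabilized-Stokes operator interacts with time differentiation, so that $({\mbf s}_h)_t$ and $({\mbf s}_h)_{tt}$ inherit the right bounds from the data $\hat{\mbf g} = {\mbf g} - \bv_t$ — this is what forces the appearance of $\hat K_3^2$ (through $\bg_t$) and $K_{4,3}^2$ (through $\bv_{tt}$) and explains the precise algebraic form of $C_1$ in~\eqref{laC1}. A secondary but delicate point is the treatment near $t=0$: all the $M$ and $K$ quantities are weighted by powers of $t/T$ exactly so that the singularities of the solution's higher derivatives (in the absence of nonlocal compatibility) are absorbed, and one must use the time-weighted stability estimates~\eqref{eq:stab_evol_1t} and~\eqref{unamas} for the $t_n$-weighted $L^2$ bound while being content with a slightly weaker (one power of $\Delta t$) estimate for the unweighted $H^1$ sum.
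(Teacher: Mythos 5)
Your overall skeleton --- splitting $\tilde\bv_h^n-\bv(t_n)=\tilde\be_h^n+({\mbf s}_h^n-\bv(t_n))$, applying Lemma~\ref{lema_stab_evol} to the error equation~(\ref{eq:er3_orig}) with $\bb_h^n=\tau_h^n$ and $d_h^n=z_h^{n+1}-z_h^n$, and estimating $\tau_h^n$ by Taylor expansion plus the stabilized-Stokes error bounds --- is the paper's. But there is a genuine gap at the heart of the proof of~(\ref{cota_th1_l2}): the time-weighted stability estimate~(\ref{eq:stab_evol_1t}) leaves the \emph{unweighted} sum $\Delta t\sum_{j=0}^{n}\|\tilde\be_h^j\|_0^2$ on its right-hand side, and your plan provides no mechanism to bound it at the required order $O(\Delta t^2+h^4+(\nu\delta)^2)$. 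A discrete Gronwall argument fails here (the weights $t_j^{-1}$ accumulate a logarithm and destroy the rate), and bounding the sum crudely by $T\max_j\|\tilde\be_h^j\|_0^2$ via~(\ref{eq:stab_evol_1}) only yields first order, since $\Delta t\sum_j\|\tau_h^j\|_0^2$ is itself only $O(\Delta t+h^2+\nu\delta)$, cf.~(\ref{cota_pre_l2_45}). The paper closes this gap with a separate, nontrivial argument that your proposal omits entirely: it integrates the scheme and the stabilized Stokes problem in time, forms the accumulated errors $\tilde\bE_h^n=\widetilde{\mbf V}_h^n-{\mbf S}_h(t_n)$ and $R_h^n$, observes that $\tilde\be_h^{n+1}$ is, up to a quadrature error, the discrete time derivative of $\tilde\bE_h^n$, and applies~(\ref{eq:stab_evol_2}) to that integrated system to obtain~(\ref{eq:stab_evol_2I})--(\ref{eq:stab_evol_3I}). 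This is also where the constants you could not quite account for actually originate: $\hat K_3^2$ enters through the quadrature error $\Delta t\sum_l\bg(t_l)-\bG(t_{n+1})$ (not through a second time derivative of ${\mbf s}_h$, which never appears in the proof), and the term $\Delta t^2\|\nabla r_h^0\|_0^2$ enters through $D_h^n=\Delta t\,q_h^0-\int_{t_n}^{t_{n+1}}z_h\,ds$, not through a triangle inequality on the initial data.

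A secondary point: for~(\ref{cota_th1_H1}) you propose the gradient estimate~(\ref{eq:stab_evol_2}), which would force you to control $\nu\|\nabla\tilde\be_h^0\|_0^2+\delta\|\nabla r_h^0\|_0^2$ (terms absent from the stated bound) and to measure $\tau_h^j$ in $L^2$. The paper instead uses~(\ref{eq:stab_evol_1}), whose left-hand side is exactly the sum in~(\ref{cota_th1_H1}) and whose right-hand side only needs $\|\tilde\be_h^0\|_0^2$ and $\nu^{-1}\|\tau_h^j\|_{-1}^2$, converted to the $L^2$ norm via~(\ref{eq:cota_menos1}); that conversion is indeed where the $(\nu\lambda)^{-1}$ in $\tilde C_1$ comes from, as you correctly guessed. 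Your consistency estimates for $\tau_h^n$ and for the pressure increments, and your identification of the first-order loss in the unweighted sums, are otherwise in line with the paper.
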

\begin{proof}
In view of (\ref{eq:er3_orig}) we can apply~(\ref{eq:stab_evol_1t}) for
$\bw_h^n=\tilde\be_h^n$, $y_h^n= r_h^n$, $\bb_h^n=\tau_h^n$
and~$d_h^n=z_h^{n+1}-z_h^n$. It follows that
\begin{equation}
\label{eq:auxa1}
\begin{split}
t_n\|&\tilde \be_h^{n}\|_0^2
+\Delta t\sum_{j=1}^{n}t_{j}\bigl(\nu\|\nabla\tilde \be_h^{j}\|_0^2+{\delta}\|\nabla r_h^{j}\|_0^2\bigr)
\\
& \le c_0\Bigl(\Delta t\sum_{j=0}^{n}
\|\tilde\be_h^{j}\|_0^2+\sum_{j=0}^{n-1}\Delta t\bigl(t_{j+1}^2\|\tau_h^j\|_{0}^2+\delta t_{j+1}\|\nabla (z_h^{j+1}-z_h^j)\|_0^2\bigr)\Bigr).
\end{split}
\end{equation}
To bound the second term on the right-hand side of~(\ref{eq:auxa1}),
we notice that
$t_{j+1}/t_j\le 2$ for~$j=1,\ldots,n-1$, so that we may
write
$$
\Delta t\sum_{j=0}^{n-1} t_{j+1}^2\|\tau_h^j\|_{0}^2
\le C t_n\Delta t
\sum_{j=0}^{n-1}t'_{j}\|\tau_h^j\|_{0}^2,
$$
where $t'_j=\max(\Delta t, t_j)$.
From  definition (\ref{eq:tau_h}) we may write
\begin{align}\label{tercero1}
\|\tau_h^j\|_0^2\le 2 \left\|\bv_t^{j+1}-\frac{\bv^{j+1}-\bv^j}{\Delta t}\right\|_0^2+\frac{2}{{\Delta t}^2}\left\|(\bv^{j+1}-{\mbf s}_h^{j+1})
-(\bv^j-{\mbf s}_h^j)\right \|_0^2.
\end{align}
To bound the first term on the right-hand side of (\ref{tercero1}),
after taking Taylor expansion with integral reminder and applying
H\"older's inequality we
have
$$
\Delta t \sum_{j=0}^{n-1}t'_{j} \left\|\bv_t^{j+1}-\frac{\bv^{j+1}-\bv^j}{\Delta t}\right\|_0^2
\le \sum_{j=0}^{n-1} t'_j \int_{t_j}^{t_{j+1}}(s-t_j)^2\|\bv_{ss}\|_0^2.
$$
Now, for $j\ge 1$, we write $t_j'(s-t_j)^2=t_j(s-t_j)^2\le t_j {\Delta t}^2 \le s{\Delta t}^2$, and, for $j=0$, $t'_0(s-t_j)^2=\Delta t (s)^2\le s{\Delta t}^2$,
so that applying (\ref{eq:cota_vtt}) we get
\begin{equation}
\Delta t \sum_{j=0}^{n-1}t'_{j} \left\|\bv_t^{j+1}-\frac{\bv^{j+1}-\bv^j}{\Delta t}\right\|_0^2
\le {\Delta t}^2\int_{t_0}^{t_{n}}s\|\bv_{ss}\|_0^2
\le {\Delta t}^2 T \nu^4K_{4,3}^2.\label{cota_pre_l2_2}
\end{equation}
To bound the second term on the right-hand side of (\ref{tercero1}) we observe that
\begin{equation*}
\begin{split}
\left\|(\bv^{j+1}-{\mbf s}_h^{j+1})
-(\bv^j-{\mbf s}_h^j)\right \|_0^2&=\Bigl\|\int_{t_j}^{t_{j+1}}(\bv-{\mbf s}_h)_s~ds\Bigr\|_0^2\\
&\le \Delta t \int_{t_j}^{t_{j+1}}\left\|(\bv-{\mbf s}_h)_s\right\|_0^2~ds,
\end{split}
\end{equation*}
where, in the last inequality we have applied H\"older's inequality.
Now, for $j\ge 1$ we write $t'_j=t_j\le s$ and
apply~(\ref{eq:error_steady_simplified_linear}) to bound~$\left\|(\bv-{\mbf s}_h)_s\right\|_0^2$, and, for $j=0$, $t'_0=\Delta t$ and apply~(\ref{complemento24}), so that we have
\begin{align}
\frac{1}{\Delta t}\sum_{j=0}^{n-1} t_j'&\bigl\|(\bv^{j+1}-{\mbf s}_h^{j+1})
-(\bv^j-{\mbf s}_h^j)\bigr \|_0^2\nonumber\\
&\le C \int_{t_1}^{t_{n}}
{(h^4+\delta^2\nu^2)}s\left(\|\bv_s\|_2^2+\nu^{-2}\|q_s\|_1^2\right)~ds
\nonumber\\&\quad+C\int_0^{t_1}\left(\nu\Delta t^2+\frac{h^4}{\nu}+\nu\delta^2\right) \left(\|\bv_s\|_1^2+\nu^{-2}\|q_s\|_0^2\right)~ds\nonumber\\
&\le C(h^4+\nu^2(\Delta t^2+\delta^2)(\nu^2 K_{4,2}^2T+\nu K_{3,2}^2),\label{cota_pre_l2_3}
\end{align}
where in the last inequality we have applied~(\ref{eq:u-int}).
Thus, from~(\ref{tercero1}), (\ref{cota_pre_l2_2}) and (\ref{cota_pre_l2_3}) we finally reach
\begin{equation}\label{cota_pre_l2_4}
\Delta t \sum_{j=0}^{n-1}t_{j}'\|\tau_h^j\|_{0}^2\le C \left({\Delta t}^2\nu^4TK_{4,3}^2+(h^4+\nu^2(\Delta t^2+\delta^2)(\nu^2 K_{4,2}^2T+\nu K_{3,2}^2)\right),
\end{equation}
so that for the second term on the right-hand side of~(\ref{eq:auxa1})
we write
\begin{equation}\label{cota_pre_l2_4bosco}
\begin{split}
\Delta t \sum_{j=0}^{n-1}t_{j+1}^2\|\tau_h^j\|_{0}^2\le& Ct_n {\Delta t}^2\nu^4TK_{4,3}^2\\
&+Ct_n(h^4+\nu^2(\Delta t^2+\delta^2)(\nu^2 K_{4,2}^2T+\nu K_{3,2}).
\end{split}
\end{equation}
Let us also observe that by writing $\Delta t^{-1} t'_j\ge 1$ and using~(\ref{tercero1}), and repeating the arguments to~prove~(\ref{cota_pre_l2_3}), but~using~(\ref{complemento24}) instead of~(\ref{eq:error_steady_simplified_linear}) for $j\ge 1$ we get
\begin{equation}\label{cota_pre_l2_45}
\Delta t \sum_{j=0}^{n-1}\|\tau_h^j\|_{0}^2\le C \left({\Delta t}\nu^4T(K_{4,2}^2+K_{4,3}^3)+(h^2+\nu\delta)\nu^2 K_{3,2}^2\right),
\end{equation}

For the last term on the right-hand side of~(\ref{eq:auxa1}),
applying H\"older's inequality and~(\ref{eq:cota1}), we may write
%
\begin{align}
\delta \|\nabla (z_h^{j+1}-z_h^j)\|_{0}^2&= \delta\bigl\|\int_{t_j}^{t_{j+1}}(\nabla z_h)_s\bigr\|_0^2\le {\Delta t}\int_{t_j}^{t_{j+1}}\delta\|(\nabla z_h)_s\|_0^2~ds\nonumber\\
&\le C {\Delta t} \int_0^{t_1} (\nu\|\bv_s\|_1^2+\nu^{-1}\|q_s\|_0^2)~ds. 
\label{eq:dos_ast}
\end{align}
Thus, 
\begin{align}
\label{eq:cons_a_222}
{\Delta t}\sum_{j=0}^{n-1}
\|\nabla (z_h^{j+1}-z_h^j)\|_{0}^2  &\le \frac{\Delta t^2}{\delta} \int_0^{t_n} (\nu\|\bv_s\|_1^2+\nu^{-1}\|q_s\|_0^2)~ds,
\nonumber\\
\end{align}
and, consequently, for the last term on the right-hand side of~(\ref{eq:auxa1}), using also~(\ref{eq:u-int}), we have
\begin{align}
\label{eq:cons_a_2}
{\Delta t} \delta\sum_{j=0}^{n-1}
t_{j+1}\|\nabla (z_h^{j+1}-z_h^j)\|_{0}^2
&\le C\nu^{3}{\Delta t}^2 K_{3,2}^2 t_{n+1}.
\end{align}
To conclude we need to bound the first term on the right-hand side of (\ref{eq:auxa1}). For this purpose
we denote
$$
{\mbf S}_h(t)=\int_0^t {\mbf s}_h(s)\,ds, \ \widetilde {\mbf V}_h^n=\Delta t\sum_{j=1}^n\tilde \bv_h^j,\ P_h^n=\Delta t \sum_{j=1}^n q_h^j, \  \bG(t)=\int_0^t \bg(s)\,ds
$$
and~integrate~(\ref{eq:pro_stokes_evo})  with respect to~time taking into account that $\hat{\mbf  g}={\mbf g}-\bv_t$. Thus,
\begin{align}\label{eq:appr_stokes_evol1t}
(\bv(t),{\bs\chi}_h)+\nu(\nabla {\mbf S}_h,\nabla {\bs\chi}_h)+(\nabla \int_0^tz_h\,ds,{\bs\chi}_h)&=(\bG+\bv(0),{\bs\chi}_h),\quad \forall {\bs\chi}_h\in V_h,\\
(\nabla \cdot {\mbf S}_h,\psi_h)+\delta(\nabla \int_0^t z_h\,ds,\nabla \psi_h)&=0,\quad \forall \psi_h\in Q_h.\label{eq:appr_stokes_evol2t}
\end{align}
We also define
$$
\tilde \bE_h^n=\widetilde {\mbf V}_h^n-{\mbf S}_h(t_n), \qquad R_h^n= P_h^n-\int_0^{t_{n}}z_h\,ds,
$$
and
\begin{equation}\label{gammahj}
\begin{split}
\Upsilon_h^n=
(\tilde \bv_h^0-\bv(0))&+\Bigl(\Delta t\sum_{j=1}^{n+1} \bg(t_j)-\bG(t_{n+1})\Bigr)
\\&+\Bigl(\frac{1}{\Delta t}\int_{t_n}^{t_{n+1}}{\mbf s}_h(s)~ds-\bv(t_{n+1})\Bigr).
\end{split}
\end{equation}
We multiply~(\ref{eq:eu_non_tilde})-(\ref{eq:eu_non_tilde2}) by~$\Delta t$, sum from $j=0$ to $n$,
and subtract from~(\ref{eq:appr_stokes_evol1t})-(\ref{eq:appr_stokes_evol2t}) evaluated at $t=t_{n+1}$ to get
\begin{align*}
\left(\frac{\tilde \bE_h^{n+1}-\tilde \bE_h^n}{\Delta t},{\bs\chi}_h\right)+&\nu (\nabla \tilde \bE_h^{n+1},\nabla {\bs\chi}_h)+(\nabla R_h^n,{\bs\chi}_h)\nonumber\\
&{}=(\Upsilon_h^n,{\bs\chi}_h)-(\nabla D_h^n,{\bs\chi}_h),\quad\forall {\bs\chi}_h\in V_h,
\\
(\nabla \cdot \tilde \bE_h^{n+1},\psi_h)+\delta(\nabla& R_h^{n+1},\nabla \psi_h)=0,
\quad \psi_h\in Q_h,
\end{align*}
where
\begin{equation}
\label{eq:D_h}
D_h^n= {\Delta t} q_h^0 -\int_{t_n}^{t_{n+1}} z_h(s)\,ds=
{\Delta t}  r_h^0+{\Delta t}z_h(0)-\int_{t_n}^{t_{n+1}} z_h(s)\,ds.
\end{equation}
We notice that
$$
\frac{\tilde \bE_h^{n+1}-\tilde \bE_h^n}{\Delta t}=\tilde\be_h^{n+1}+\left({\mbf s}_h^{n+1}-\frac{1}{\Delta t}\int_{t_n}^{t_{n+1}}{\mbf s}_h(s)~ds\right),
\qquad \tilde \bE_h^0=0, \qquad R_h^0=0
$$so that
applying~(\ref{eq:stab_evol_2}) for $\bw_h^n=\tilde \bE_h^n$, $y_h^n=R_h^n$, $\bb_h^n=
\Upsilon_h^n$ and~$d_h^n=D_h^n$ we get
\begin{equation*}
\sum_{j=0}^{n-1}\Delta t \Bigl\|\frac{\tilde \bE_h^{n+1}-\tilde \bE_h^n}{\Delta t}\Bigr\|_0^2\le
 c_0\Delta t\sum_{j=0}^{n-1}(\|\Upsilon_h^j\|_0^2+\|\nabla D_h^j\|_0^2)\quad
\end{equation*}
and then
\begin{equation}
\label{eq:stab_evol_2I}
\sum_{j=1}^{n}\Delta t \|\tilde \be_h^n\|_0^2
\le
 C\Bigl(\Delta t\sum_{j=0}^{n-1}\bigl(\|\Upsilon_h^j\|_0^2+\|\nabla D_h^j\|_0^2+\bigl\|{\mbf s}_h^{j+1}
 -\frac{1}{\Delta t}\int_{t_j}^{t_{j+1}}{\mbf s}_h(s)~ds\bigr\|_0^2\bigr)\Bigr).
\end{equation}
We now bound the right-hand side of~(\ref{eq:stab_evol_2I}). We start with the second term of $\Upsilon_h^j$ in (\ref{gammahj}).
We notice that
$$
\int_{t_{j}}^{t_{j+1}} \bg(t)\,dt-\Delta t\bg(t_j)=\int_{t_{j}}^
{t_{j+1}} (\bg(t)-\bg(t_j))\,dt
$$
so that
by successively applying H\"older's inequality and the mean value theorem
we have
\begin{align*}
\bigl\|\int_{t_{j}}^{t_{j+1}} \bg(t)\,dt-\Delta t\bg(t_j)\bigr\|_0^2&\le \Delta t\int_{t_{j}}^{t_{j+1}}\|\bg(t)-\bg(t_j)\|_0^2\,dt\\
&\le
\Delta t \int_{t_{j}}^{t_{j+1}}\bigl\|\int_{t_j}^t\bg_s(s)\bigr\|_0^2\,dt
\\
&\le
\Delta t\int_{t_{j}}^{t_{j+1}}(t-t_j)\int_{t_j}^t \left\|\bg_s(s)\right\|_0^2\,ds\,dt
\\
&\le
 \Delta t \int_{t_j}^{t_{j+1}}\left\|\bg_s(s)\right\|_0^2\,ds\int_{t_{j}}^{t_{j+1}}(t-t_j)\,dt\\
 &\le
\frac{1}{2}{\Delta t}^{3} \int_{t_{j}}^{t_{j+1}}\left\|\bg_s(s)\right\|_0^2\,ds.
\end{align*}
Thus, applying H\"older's inequality we have
\begin{align*}
\Bigl\|\Delta t\sum_{l=1}^{j+1} \bg(t_l)-\bG(t_{j+1})\Bigr\|_0^2
&=
\Bigl\|\sum_{l=1}^{j+1}\Delta t \bg(t_l)-\int_{t_{l-1}}^{t_l}\bg(t)\,dt\Bigr\|_0^2
\\
&\le (j+1)\sum_{l=1}^{j+1}
\Bigl\|\Delta t \bg(t_l)-\int_{t_{l-1}}^{t_l}\bg(t)\,dt\Bigr\|_0^2
\\
&\le t_{j+1}\frac{1}{2}{\Delta t}^2
 \int_{0}^{t_{j+1}}\left\|\bg_t(t)\right\|_0^2\,dt.
\end{align*}
And then
\begin{equation}\label{lasgs}
\sum_{j=0}^{n-1}\Delta t\Bigl\|\sum_{l=1}^{j+1}\Delta t\bg(t_l)-\bG(t_{j+1})\Bigr\|_0^2\le Ct_n^2{{\Delta t}^2}\int_0^{t_n}\left\|\bg_t(t)\right\|_0^2\,dt\le Ct_n^2{\Delta t}^2\nu^2\hat K_3^2,
\end{equation}
where in the last inequality we have applied (\ref{eq:cota_gt}).

To bound the third term of $\Upsilon_h^j$ in (\ref{gammahj}) we observe that
\begin{equation}\label{eq:casitodo2}
\begin{split}
\sum_{j=0}^{n-1}\Delta t\bigl\|&\frac{1}{\Delta t}\int_{t_n}^{t_{n+1}}{\mbf s}_h(s)~ds-\bv(t_{n+1})\bigr\|_0^2
\\&=\sum_{j=0}^{n-1}\frac{1}{\Delta t}\bigl\|\int_{t_n}^{t_{n+1}}({\mbf s}_h(s)-{\Delta t}\bv(t_{n+1}))~ds\bigr\|_0^2\\
&\le\sum_{j=0}^{n-1}\frac{2}{\Delta t}\Bigl\|\int_{t_n}^{t_{n+1}}({\mbf s}_h(s)-{\Delta t}\mbf s(t_{n+1}))~ds\bigr\|_0^2\\
&\quad+\sum_{j=0}^{n-1}2{\Delta t}\bigl\|{\mbf s}_h(t_{n+1})-\bv(t_{n+1})\bigr\|_0^2.
\end{split}
\end{equation}
For the first term on the right-hand side of (\ref{eq:casitodo2}) arguing as in (\ref{lasgs}) and then applying (\ref{eq:cota0deri1}),
(\ref{eq:u-inf}) and (\ref{eq:u-int}) we finally get
\begin{align}\label{eq:casitodo3}
\sum_{j=0}^{n-1}\frac{2}{\Delta t}\Bigl\|\int_{t_n}^{t_{n+1}}({\mbf s}_h(s)-{\Delta t}\mbf s(t_{n+1}))~ds\Bigr\|_0^2&\le 2\sum_{j=0}^{n-1}{\Delta t}^2\int_{t_n}^{t_{n+1}}\|({\mbf s}_h)_s\|_0^2~ds\nonumber\\
&\le C {\Delta t}^2\nu^2\left( K_{3,2}^2h^2+M_{2,2}^2t_n\right).
\end{align}
For the second term on the right-hand side of (\ref{eq:casitodo2}) applying (\ref{eq:error_steady_simplified_linear}) and (\ref{eq:u-infa}) we get
\begin{eqnarray}\label{eq:casitodo4}
\sum_{j=0}^{n-1}2{\Delta t}\left\|{\mbf s}_h(t_{n+1})-\bv(t_{n+1})\right\|_0^2
&\le& 2t_n\max_{t_1\le s\le {t_n}}\left\|{\mbf s}_h(s)-\bv(s)\right\|_0^2
\nonumber\\
&
\le & Ct_n  (h^4+\delta^2\nu^2) M_{2,1}^2.
\end{eqnarray}
Inserting (\ref{eq:casitodo3}) and (\ref{eq:casitodo4}) in (\ref{eq:casitodo2}) we obtain
\begin{eqnarray}\label{second_in_gamma}
\sum_{j=0}^{n-1}\Delta t\Bigl\|\frac{1}{\Delta t}\int_{t_n}^{t_{n+1}}{\mbf s}_h(s)~ds-\bv(t_{n+1})\Bigr\|_0^2&\le& C {\Delta t}^2\nu^2\left( K_{3,2}^2h^2+M_{2,2}^2t_n\right)
\nonumber\\
&&\quad + Ct_n  (h^4+\delta^2\nu^2) M_{2,1}^2.
\end{eqnarray}
Then, from (\ref{lasgs}) and (\ref{second_in_gamma}) and taking into account the definition of $\Upsilon_h^j$ in (\ref{gammahj}) we finally reach
\begin{eqnarray}\label{eq:unadel_fin}
\Delta t\sum_{j=0}^{n-1}\|\Upsilon_h^j\|_0^2&\le& C\left( t_n\|\tilde \bv_h^0-\bv(0)\|_0^2+t_n^2{\Delta t}^{2} \nu^2 \hat K_3^2\right)\\
&&\quad+C \left({\Delta t}^2\nu^2\left( K_{3,2}^2h^2+M_{2,2}^2t_n\right)
+ t_n(h^4+\delta^2\nu^2) M_{2,1}^2\right)\nonumber.
\end{eqnarray}
To bound the second term on the right-hand side of (\ref{eq:stab_evol_2I}) we notice that using definition (\ref{eq:D_h}) we get
\begin{align*}
\|\nabla D_h^j\|_0^2&\le 4\Bigl\|\int_{t_j}^{t_{j+1}} \nabla z_h(s)\,ds\Bigr\|_0^2+4{\Delta t}^2\left\|\nabla z_h(0)
\right\|_0^2 +2{\Delta t}^2\left\|\nabla  r_h^0\right\|_0^2
\nonumber\\
&\le 4\Delta t \int_{t_j}^{t_{j+1}} \left \|\nabla z_h(s)\right\|_0^2\,ds +
4{\Delta t}^2\left\|\nabla z_h(0)
\right\|_0^2+2{\Delta t}^2\left\|\nabla   r_h^0\right\|_0^2.\nonumber\\
\end{align*}
Thus, applying (\ref{eq:cota_nabla_z_h}) and (\ref{eq:u-infa}) we obtain
\begin{eqnarray}\label{eq:dosdel_fin}
 \Delta t\sum_{j=0}^{n-1}\|\nabla D_h^j\|_0^2&\le& C{\Delta t}^2  \left(
\int_{0}^{t_{n}} \left \|\nabla z_h(s)\right\|_0^2\,ds
+t_n\left\|\nabla z_h(0)\right\|_0^2+t_n \left\|\nabla  r_h^0\right\|_0^2\right)\nonumber\\
&\le&C{\Delta t}^2t_n\left(\max_{0\le t\le t_n}\|\nabla z_h(t)\|_0^2+\left\|\nabla  r_h^0\right\|_0^2\right)
\nonumber\\
&\le&C{\Delta t}^2t_n\left(\nu^2 M_{2,1}^2+\left\|\nabla  r_h^0\right\|_0^2\right).
\end{eqnarray}
On the other hand, for the last term in~(\ref{eq:stab_evol_2I}) we get
\begin{align*}
\bigl\|{\mbf s}_h^{j+1}-\frac{1}{\Delta t}\int_{t_j}^{t_{j+1}}&{\mbf s}_h(s)~ds\bigr\|_0^2
=\bigl\|({\mbf S}_h)_t^{j+1}-\frac{{\mbf S}_h^{j+1}-{\mbf S}_h^j}{\Delta t}\bigr\|_0^2\\
&=\bigl\|\frac{1}{\Delta t}\int_{t_j}^{t_{j+1}} (t_{j}-s) ({\mbf S}_h)_{ss}\,ds\bigr\|_0^2\\
&\le\Delta t \int_{t_j}^{t_{j+1}} \left\|({\mbf s}_h)_{s}\right\|_0^2ds\\
&\le C{\Delta t}\int_{t_j}^{t_{j+1}} h^2\bigl(\|\nabla \bv_t\|_1^2+\nu^{-2}\|q_t\|_0^2~ds+\|\bv_t\|_0^2\,\bigr)ds,
\end{align*}
where in the last inequality we have applied (\ref{eq:cota0deri1}).
Consequently, using (\ref{eq:u-inf}) and (\ref{eq:u-int}) we reach
\begin{align}\label{eq:casitodo1}
\Delta t\sum_{j=0}^{n-1}\Bigl\|{\mbf s}_h^{j+1}&-\frac{1}{\Delta t}\int_{t_j}^{t_{j+1}}{\mbf s}_h(s)~ds\Bigr\|_0^2\nonumber\\
&\le C{\Delta t}^2
\int_{0}^{t_{n}} \left(h^2\left(\|\nabla \bv_t\|_1^2+\nu^{-2}\|q_t\|_0^2~ds\right)+\|\bv_t\|_0^2\,\right)ds\nonumber\\
&\le C {\Delta t}^2\nu^2 (K_{3,2}^2h^2 +M_{2,2}^2t_n).
\end{align}
Thus, in view of~(\ref{eq:stab_evol_2I}), (\ref{eq:unadel_fin}), (\ref{eq:dosdel_fin}) and~(\ref{eq:casitodo1}) we get
\begin{equation}
\label{eq:stab_evol_3I}
\begin{split}
\sum_{j=1}^{n}\Delta t\|\tilde\be_h^j\|_0^2\le& C\left( t_n\|\tilde \bv_h^0-\bv(0)\|_0^2+t_n{\Delta t}^2\left\|\nabla  r_h^0\right\|_0^2+\nu^2 t_n^2
\hat K_3^2{\Delta t}^{2}
\right)\\
+&C \left({\Delta t}^2\nu^2\left( K_{3,2}^2h^2+t_n(M_{2,1}^2+M_{2,2}^2)\right)
+ t_n  (h^4+\delta^2\nu^2) M_{2,1}^2\right).
\end{split}
\end{equation}
Going back to (\ref{eq:auxa1}) and inserting (\ref{cota_pre_l2_4bosco}), (\ref{eq:cons_a_2}) and  (\ref{eq:stab_evol_3I}) we finally reach
\begin{align}
\label{eq:auxa2bis}
t_n\|\tilde \be_h^{n}\|_0^2&
+\Delta t\sum_{j=1}^{n}t_{j}\bigl(\nu\|\nabla\tilde \be_h^{j}\|_0^2+{\delta}\|\nabla r_h^{j}\|_0^2\bigr)\nonumber
\\
&{}\le C\left( t_n\|\tilde \bv_h^0-\bv(0)\|_0^2+{\Delta t}\|\tilde \be_h^0\|_0^2+t_n({\Delta t})^2\left\|\nabla  r_h^0\right\|_0^2+t_n^2{\Delta t}^{2} \nu^2\hat K_3^2\right)
\nonumber\\
&\quad+ C\left(t_n (h^4+\delta^2\nu^2) (\nu^2K_{4,2}^2T+\nu K_{3,2}^2+  M_{2,1}^2)\right)\nonumber\\
&\quad+C{\Delta t}^2\left(\nu^4 T  (K_{4,2}^2+K_{4,3}^2)t_n+t_{n+1}\nu^{3} K_{3,2}^2\right)\nonumber\\
&\quad+C {\Delta t}^2\left(\nu^2\left( K_{3,2}^2h^2+t_n(M_{2,1}^2+M_{2,2}^2)\right)\right)\nonumber\\
&\le C t_n\left( \|\tilde \bv_h^0-\bv(0)\|_0^2+\|\tilde \be_h^0\|_0^2+{\Delta t}^2\left\|\nabla  r_h^0\right\|_0^2\right)\nonumber\\
&\quad+C_1 t_n {\Delta t}^2+C_2 t_n (h^4+\delta^2\nu^2),
\end{align}
where $C_1$ and $C_2$ are the constants in (\ref{laC1}) and (\ref{laC2}) and we have used the bounds  $t_{n+1}\le C t_n$, $\Delta t\le t_n$ and that $(\Delta t)^2\nu^2K_{3,2}^2h^2\le t_n(\Delta t)\nu^2 K_{3,2}^2h^2\le t_n((\Delta t)^2\nu^3+h^4\nu)K_{3,2}^2$.

To conclude (\ref{cota_th1_l2})  we apply (\ref{eq:auxa2bis}) together with triangle inequality, (\ref{eq:error_steady_simplified_linear})
and (\ref{eq:u-infa}).

Finally to prove~(\ref{cota_th1_H1}) we apply~(\ref{eq:stab_evol_1}) instead of~(\ref{eq:stab_evol_1t}). Then,
using~(\ref{eq:cota_menos1}) and then
applying~(\ref{cota_pre_l2_45}), (\ref{eq:cons_a_222}) and~(\ref{eq:u-int}),
we have that
\begin{align}\label{eq:segun_esti}
\Delta t&\sum_{j=1}^n\bigl(\nu \|\nabla \tilde\be_h^h\|_0^2+\delta\|\nabla r_h\|_0^2\bigr) \le
 C \left(\|\tilde\bv_h^0-\bv(0)\|_0^2 +M_{2,1}^2(h^4+(\nu\delta)^2)\right)
\nonumber\\
&\quad +\frac{C}{\nu\lambda}\Bigl(\nu^4(K_{4,2}^2+K_{4,3}^2)T\Delta t + \nu^2K_{3,2}(h^2+\nu\delta +\nu^2\lambda \Delta t^2\Bigr)
\nonumber\\
&\le C(\|\tilde\bv_h^0-\bv(0)\|_0^2 \nonumber\\
&\quad +C\bigl(C_1\Delta t\bigl((\nu\lambda)^{-1} +\Delta t\bigr)
+ C_2(\lambda^{-1}(h^2+\nu\delta) + h^4+(\nu\delta)^2\bigr )\nonumber\\
&\le C \|\tilde\bv_h^0-\bv(0)\|_0^2 +\tilde C_1 \Delta t+\tilde C_2(h^2+\nu\delta),
\end{align}
where $\tilde C_1$ and $\tilde C_2$ are the constants in (\ref{laCtilde1}) and (\ref{laCtilde2}). Taking into account~(\ref{eq:error_steady_simplified_linear}) the estimate
(\ref{cota_th1_H1}) follows.

\end{proof}
\begin{remark}\label{remark1} Let us observe that taking $\delta=\Delta t$ the analysis carried out applies to  the standard Euler non-incremental scheme. However, since condition (\ref{eq:cond_delta}) implies
\begin{equation}\label{eq:cond_pidomask}
\frac{1}{\nu\rho_1^2} h^2\le \Delta t,
\end{equation}
the analysis for the standard Euler non-incremental scheme holds under condition (\ref{eq:cond_pidomask}). This result is in agreement with the
error bounds in \cite{badia-codina} where the authors prove error bounds for the Euler non-incremental scheme for LBB stable elements assuming $\Delta t\ge C h^2$. It is also in agreement with the classical results for the continuous in space Euler non-incremental method (see for example \cite{Guermond_overview}) since for $\delta=\Delta t$ the rate of convergence in terms of $\Delta t$ in the $L^2$ norm of the velocity is one and the rate of convergence in the $H^1$ norm of the velocity and the $L^2$ norm of the pressure is one half, see (\ref{cota_th1_l2})-(\ref{cota_th1_H1}).
\end{remark}
\begin{remark}\label{remark2}
In view of (\ref{cota_th1_l2}) and (\ref{cota_th1_H1}) any initial approximation $\tilde \bv_h^0$  based on linear elements such us the linear
interpolant of the initial condition gives the optimal order for the term $\|\tilde \bv_h^0-\bv(0)\|_0$. For the initial pressure
any initial pressure giving $\|\nabla r_h^0\|_0=O(1)$ keeps the optimal rate of convergence. In particular, choosing $q_h^0=0$ which means taking in (\ref{eq:eu_non}) $\bv_h^0=\tilde\bv_h^0$
gives $\|\nabla r_h^0\|_0=\|\nabla z_h\|_0$ that is bounded thanks to (\ref{eq:cota_nabla_z_h}).
\end{remark}
\begin{remark} As commented in~\cite{Javier-Bosco-JuliaI},
the restriction~(\ref{eq:cond_delta2}) for the modified Euler non-incremental scheme is not just a requirement of the proof but, as it can be easily checked in practice, the method becomes unstable if $\Delta t$
is taken larger than $2\delta$.
\end{remark}
We will now prove a bound for the pressure error.
\begin{theorem}\label{Th2} Under the assumptions of Theorem~\ref{Th1} the following bound holds
\begin{align}\label{cota_prefinal_mod}
\Delta t\sum_{j=1}^nt_j\|q_h^j-q(t_j)\|_0^2\le  &C(t_{n+1}\nu+\lambda^{-1}) \|\tilde\bv_h^0-\bv(0)\|_0^2\nonumber\\
&{}+C t_{n+1}\nu\left(\|\tilde \be_h^0\|_0^2+{\Delta t}^2\|\nabla r_h^0\|_0^2\right)\nonumber\\
&{}+ C_3\Delta t + C_4 (h^2+\nu\delta),
\end{align}
where, (assuming (\ref{delta<T})),
\begin{align}
\label{laC3}
C_3&=C(\nu T+\lambda^{-1}) \tilde C_1\\
C_4&=C\bigl(\nu T+\lambda^{-1})\tilde C_2,
\label{laC4}
\end{align}
and where~$\tilde C_1$ and~$\tilde C_2$ are the constants in~(\ref{laCtilde1}) and~(\ref{laCtilde2}) respectively.
\end{theorem}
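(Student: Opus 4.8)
The plan is to bound the pressure error $q_h^j - q(t_j)$ by splitting it as $q_h^j - q(t_j) = r_h^j + (z_h^j - q(t_j))$, where $r_h^j = q_h^j - z_h^j$ and $z_h^j$ is the stabilized Stokes approximation pressure. The second piece is controlled directly by the stabilized Stokes error bound~(\ref{eq:error_steady_simplified_linear}) together with the regularity assumption~(\ref{eq:M1-M2}), contributing terms of size $h^2 + \nu\delta$ after multiplying by $t_j$, summing in $j$ with the $\Delta t$ weight, and using $\Delta t \sum_j t_j \le C t_{n+1}^2 \le C T t_{n+1}$. So the work is in estimating $\Delta t \sum_{j=1}^n t_j \|r_h^j\|_0^2$.

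For the $r_h^j$ term I would use Lemma~\ref{lema_presion} applied to $r_h^j \in Q_h$: this gives $\|r_h^j\|_0 \le C h \|\nabla r_h^j\|_0 + C \sup_{\bs\chi_h} (r_h^j, \nabla\cdot\bs\chi_h)/\|\bs\chi_h\|_1$. The first term, after weighting by $t_j$ and summing, is controlled by $\Delta t \sum_j t_j \nu \|\nabla \tilde\be_h^j\|_0^2$ — no wait, it is $h^2 \Delta t \sum t_j \|\nabla r_h^j\|_0^2$, and using condition~(\ref{eq:cond_delta}) in the form $h^2 \le \nu\rho_1^2\delta$, this is bounded by $\nu\rho_1^2 \cdot \delta \Delta t\sum t_j\|\nabla r_h^j\|_0^2$, which is exactly the quantity appearing on the left-hand side of~(\ref{cota_th1_H1})... actually of the intermediate bound~(\ref{eq:auxa2bis}) (the $\delta\|\nabla r_h^j\|_0^2$ sum), so it is already controlled. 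The supremum term is the crux: from the first equation of the error system~(\ref{eq:er3_orig}) one solves for $(\nabla r_h^n, \bs\chi_h) = -(\tilde\be_h^{n+1}-\tilde\be_h^n)/\Delta t, \bs\chi_h) - \nu(\nabla\tilde\be_h^{n+1},\nabla\bs\chi_h) + (\tau_h^n,\bs\chi_h) - (\nabla(z_h^n - z_h^{n+1}),\bs\chi_h)$, and since $-(\nabla r_h^n,\bs\chi_h) = (r_h^n,\nabla\cdot\bs\chi_h)$, dividing by $\|\bs\chi_h\|_1$ and using Cauchy--Schwarz plus the Poincaré inequality~(\ref{eq:poincare2}) on $\bs\chi_h$ gives
$$
\|r_h^n\|_0 \le C\Bigl(\Bigl\|\frac{\tilde\be_h^{n+1}-\tilde\be_h^n}{\Delta t}\Bigr\|_{-1} + \nu\|\nabla\tilde\be_h^{n+1}\|_0 + \|\tau_h^n\|_{-1} + \|\nabla(z_h^{n+1}-z_h^n)\|_{-1}\Bigr) + Ch\|\nabla r_h^n\|_0.
$$

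So after squaring, weighting by $t_j$, and summing with $\Delta t$, I need four ingredients: (i) $\Delta t\sum t_{j+1}\|(\tilde\be_h^{j+1}-\tilde\be_h^j)/\Delta t\|_{-1}^2$ — controlled via~(\ref{eq:cota_menos1}) by $\lambda^{-1}$ times the $L^2$-norm version, which is the left-hand side of the weighted bound~(\ref{unamas}) applied to the error system (this is presumably why~(\ref{unamas}) was added to Lemma~\ref{lema_stab_evol}); (ii) $\nu^2 \Delta t\sum t_{j+1}\|\nabla\tilde\be_h^{j+1}\|_0^2 \le \nu \cdot [\nu\Delta t\sum t_j\|\nabla\tilde\be_h^j\|_0^2]$ — controlled by $\nu$ times the left side of~(\ref{eq:auxa2bis}); (iii) the consistency term $\Delta t\sum t_{j+1}\|\tau_h^j\|_{-1}^2 \le \lambda^{-1}\Delta t\sum t_{j+1}\|\tau_h^j\|_0^2$, already estimated in~(\ref{cota_pre_l2_4bosco}); (iv) the pressure-increment term, bounded via~(\ref{eq:cota_menos1}) and~(\ref{eq:cons_a_2}). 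Collecting everything and tracking the powers of $\nu$, $\lambda^{-1}$, $t_{n+1}$, one assembles the constants $C_3 = C(\nu T + \lambda^{-1})\tilde C_1$ and $C_4 = C(\nu T + \lambda^{-1})\tilde C_2$. The main obstacle I anticipate is term (i): one must make sure the weighted stability estimate~(\ref{unamas}) (with $\bw_h^j = \tilde\be_h^j$, $y_h^j = r_h^j$, $\bb_h^j = \tau_h^j$, $d_h^j = z_h^{j+1}-z_h^j$) genuinely controls $\Delta t\sum t_{j+1}\|(\tilde\be_h^{j+1}-\tilde\be_h^j)/\Delta t\|_0^2$ in terms of quantities already bounded in the proof of Theorem~\ref{Th1} — in particular its right-hand side involves $\Delta t\sum t_{j+1}\|\tau_h^j\|_0^2$, $\Delta t\sum t_{j+1}\|\nabla(z_h^{j+1}-z_h^j)\|_0^2$ and the unweighted sums $\Delta t\sum(\nu\|\nabla\tilde\be_h^j\|_0^2 + \delta\|\nabla r_h^j\|_0^2)$, all of which are available from~(\ref{cota_pre_l2_4bosco}), (\ref{eq:cons_a_2}) and~(\ref{cota_th1_H1})/(\ref{eq:segun_esti}) — so the bookkeeping of constants, rather than any new idea, is where care is required. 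Finally the triangle inequality with~(\ref{eq:error_steady_simplified_linear}) converts the bound on $r_h^j$ into the stated bound on $q_h^j - q(t_j)$.
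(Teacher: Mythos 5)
Your proposal is correct and follows essentially the same route as the paper: Lemma~\ref{lema_presion} applied to $r_h^j$, the error equation~(\ref{eq:er3_orig}) to express the supremum term, condition~(\ref{eq:cond_delta}) to absorb $h^2\|\nabla r_h^j\|_0^2$ into the $\nu\delta\|\nabla r_h^j\|_0^2$ sum already controlled by~(\ref{eq:auxa2bis}), the weighted stability estimate~(\ref{unamas}) combined with~(\ref{eq:cota_menos1}) for the discrete time-derivative term (which is indeed exactly why~(\ref{unamas}) was added to Lemma~\ref{lema_stab_evol}), (\ref{cota_pre_l2_4}) for the consistency term, and a final triangle inequality with~(\ref{eq:error_steady_simplified_linear}). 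The only (immaterial) deviation is the pressure-increment term, which the paper bounds directly as $\|z_h^j-z_h^{j+1}\|_0$ via the time derivative of $z_h$ in~(\ref{la116}) rather than through $\lambda^{-1}\|\nabla(z_h^{j+1}-z_h^j)\|_0^2$ and~(\ref{eq:cons_a_2}); both yield a contribution absorbed into $C_3\Delta t$.
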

\begin{proof}
Applying Lemma~\ref{lema_presion} and (\ref{eq:er3_orig}) it is easy to obtain
\begin{equation}\label{eq:conlema1_1}
\begin{split}
\Delta t \sum_{j=1}^{n}t_{j}\|r_h^{j}\|_0^2&\le C\Delta t \sum_{j=1}^{n}t_{j}\nu\delta \|\nabla r_h^{j}\|_0^2\\
&\quad+C\Delta t \sum_{j=1}^{n}t_{j}\Bigl\|\frac{\tilde \be_h^{j+1}-\tilde \be_h^j}{\Delta t}\Bigr\|_{-1}^2
+C\Delta t \sum_{j=1}^{n}t_{j}\|\tau_h^j\|_{-1}^2\\
&\quad+C\Delta t \sum_{j=1}^{n}t_{j}\nu^2\|\nabla \tilde e_h^{j+1}\|_0^2+C\Delta t \sum_{j=1}^{n}t_{j}\|(z_h^j-z_h^{j+1})\|_0^2.
\end{split}
\end{equation}
We will bound all the terms on the right-hand side of (\ref{eq:conlema1_1}). We first observe that the first and forth terms
are already bounded in (\ref{eq:auxa2bis}) and then
 \begin{align}\label{cota_pre_l2_1}
 C\Delta t &\sum_{j=1}^{n}t_{j}\nu\delta \|\nabla r_h^{j}\|_0^2+C\Delta t \sum_{j=1}^{n}t_{j}\nu^2\|\nabla \tilde e_h^{j+1}\|_0^2\nonumber\\
 &\le C t_{n+1}\nu \left(\|\tilde\bv_h^0-\bv(0)\|_0^2+\|\tilde \be_h^0\|_0^2+{\Delta t}^2\|\nabla r_h^0\|_0^2\right)\nonumber\\
 &\quad+\nu t_{n+1}\bigl(C_1{\Delta t}^2+C_2(h^4+(\nu\delta)^2)\big).
 \end{align}
 To bound the third term on the right-hand side of (\ref{eq:conlema1_1}) we first apply~(\ref{eq:cota_menos1}) and then~(\ref{cota_pre_l2_4}) to get
\begin{equation}\label{tercero}
\begin{split}
\Delta t \sum_{j=1}^{n}&t_{j}\|\tau_h^j\|_{-1}^2\le \frac{C}{\lambda}\Delta t \sum_{j=1}^{n}t_{j}\|\tau_h^j\|_{0}^2\\
&\le \frac{C}{\lambda} \left({\Delta t}^2\nu^4TK_{4,3}^2+(h^4+\nu^2(\Delta t^2+\delta^2))(\nu^2 K_{4,2}^2T+\nu K_{3,2}^2)\right)\\
&\le {C}\lambda^{-1} \left(C_1\Delta t^2+C_2(h^4+(\nu\delta)^2)\right).
\end{split}
\end{equation}
For the last term on the right-hand side of (\ref{eq:conlema1_1}) arguing as in~(\ref{eq:dos_ast}) and applying (\ref{eq:cota_z_h_0}) to the time derivative $\|(z_h)_t\|_0$, and (\ref{eq:u-int}) we get
\begin{equation}\label{la116}
\begin{split}
\Delta t \sum_{j=1}^{n}t_{j}\|(z_h^j-z_h^{j+1})\|_0^2&\le  {\Delta t}^2 \sum_{j=1}^nt_j\int_{t_j}^{t_{j+1}}\|(z_h)_s\|_0^2~ds\\
&\le {\Delta t}^2Ct_n\int_{t_1}^{t_{n+1}}s\left(\nu^2\|\bv_s\|_1^2+\|q_s\|_0^2\right)
\\&
\le Ct_{n}{\Delta t}^2 \nu^4 K_{3,2}^2\\
&\le C\nu T C_1\Delta t^2.
\end{split}
\end{equation}
To conclude we will bound the second term on the right-hand side of (\ref{eq:conlema1_1}).
Since $t_j\le t_{j+1}$ and taking into account (\ref{eq:cota_menos1}) we can write
\begin{eqnarray*}
\Delta t \sum_{j=1}^{n}t_{j}\Bigl\|\frac{\tilde \be_h^{j+1}-\tilde \be_h^j}{\Delta t}\Bigr\|_{-1}^2
&\le&  \Delta t \sum_{j=1}^{n}t_{j+1}\Bigl\|\frac{\tilde \be_h^{j+1}-\tilde \be_h^j}{\Delta t}\Bigr\|_{-1}^2\nonumber\\
&\le&
C \lambda^{-1}\Delta t \sum_{j=1}^{n}t_{j+1}\Bigl\|\frac{\tilde \be_h^{j+1}-\tilde \be_h^j}{\Delta t}\Bigr\|_{0}^2.
\end{eqnarray*}
Applying (\ref{unamas}) we get
\begin{align}
\lambda^{-1}\Delta t \sum_{j=1}^{n}t_{j+1}&\Bigl\|\frac{\tilde \be_h^{j+1}-\tilde \be_h^j}{\Delta t}\Bigr\|_{0}^2\nonumber\\
&\le c_0\lambda^{-1}
\Bigl( \Delta t\sum_{j=1}^nt_{j+1}\|\tau_h^j\|_0^2+ \Delta t\sum_{j=1}^nt_{j+1}\|\nabla (z_h^{j+1}-z_h^j)\|_0^2\nonumber\\
&\quad+\Delta t\nu\sum_{j=1}^n\|\nabla \tilde \be_h^j\|_0^2
+\Delta t \delta \sum_{j=1}^n\|\nabla r_h^j\|_0^2\Bigr).\label{cota_pre_l2_5}
\end{align}
To conclude we will bound the four terms on the right-hand side of (\ref{cota_pre_l2_5}).
For the first one recalling that $t_{j+1}/t_j\le 2$ for $j\ge 1$ and applying  (\ref{cota_pre_l2_4}) we get
\begin{align}\label{cota_pre_l2_6}
\Delta t \sum_{j=1}^{n}t_{j+1}\|\tau_h^j\|_{0}^2 & \le C \left({\Delta t}^2\nu^4TK_{4,3}^2+(h^4+\nu^2
(
\Delta t^2
+\delta^2))(\nu^2 K_{4,2}^2T+\nu K_{3,2}^2)\right)
\nonumber\\
&\le {C}\lambda^{-1} \left(C_1\Delta t^2+C_2(h^4+(\nu\delta)^2)\right).
\end{align}
To bound the second term on the right-hand side of (\ref{cota_pre_l2_5}) arguing as usual and applying (\ref{eq:cons_a_222}),(\ref{eq:cond_delta2})
and (\ref{eq:u-int}) we get
\begin{align}\label{cota_pre_l2_7}
\Delta t\sum_{j=1}^nt_{j+1}\|\nabla (z_h^{j+1}-z_h^j)\|_0^2
&\le Ct_{n+1}{\Delta t}\nu^3 K_{3,2}^2\le CTC_1\Delta t \le C\tilde C_1 \Delta t.
\end{align}
To conclude we observe that the last two terms in (\ref{cota_pre_l2_5}) have been bounded in (\ref{eq:segun_esti}). Then
\begin{align}\label{eq:forma0}
\Delta t\nu\sum_{j=1}^n\|\nabla \tilde \be_h^j\|_0^2+\Delta t \delta \sum_{j=1}^n\|\nabla r_h^j\|_0^2 \le   C \|\tilde\bv_h^0-\bv(0)\|_0^2 +\tilde C_1 \Delta t+\tilde C_2(h^2+\nu\delta).
\end{align}
Thus, inserting~(\ref{cota_pre_l2_6}), (\ref{cota_pre_l2_7}) and~(\ref{eq:forma0}) into~(\ref{cota_pre_l2_5}) we have
\begin{align}\label{cota_pre_l2_11_f2}
\lambda^{-1}\Delta t \sum_{j=1}^{n}t_{j+1}\Bigl\|\frac{\tilde \be_h^{j+1}-\tilde \be_h^j}{\Delta t}\Bigr\|_{0}^2
\le& C\lambda^{-1}\bigl(\|\tilde\bv_h^0-\bv(0)\|_0^2 +\tilde C_1\Delta t \bigr)\nonumber\\
&{}+C\lambda^{-1}\bigl((h^2+\nu\delta) \tilde C_2+C_1 \Delta t^2\bigr)\\
 &+\lambda^{-1}C_2(h^4+(\nu\delta)^2)
 \nonumber\\
{}\le& C\lambda^{-1}\bigl(\|\tilde\bv_h^0-\bv(0)\|_0^2 +\tilde C_1\Delta t+ \tilde C_2(h^2+\nu\delta)\bigr).\nonumber
\end{align}
Finally, inserting (\ref{cota_pre_l2_1}), (\ref{tercero}), (\ref{la116}) and (\ref{cota_pre_l2_11_f2}) in (\ref{eq:conlema1_1})  we obtain
for the modified Euler non-incremental method
\begin{align}\label{a_ver_si_esta}
\Delta t\sum_{j=1}^nt_j\|r_h^j\|_0^2 \le &C(t_{n+1}\nu+\lambda^{-1}) \|\tilde\bv_h^0-\bv(0)\|_0^2+C t_{n+1}\nu\left(\|\tilde \be_h^0\|_0^2+{\Delta t}^2\|\nabla r_h^0\|_0^2\right)\nonumber\\
&{}+C\lambda^{-1}\bigl( \tilde C_1\Delta t+\tilde C_2(h^2+\nu\delta)\bigr)\nonumber\\
 &{}+C(\nu t_{n+1}+\lambda^{-1})\bigl(C_1{\Delta t}^2+C_2(h^4+(\nu\delta)^2)\bigr)
\nonumber \\
 {}\le&\,\, C(\nu T+\lambda^{-1})\bigl( \|\tilde\bv_h^0-\bv(0)\|_0^2+\|\tilde \be_h^0\|_0^2+{\Delta t}^2\|\nabla r_h^0\|_0^2\bigr)
 \nonumber\\
&{}+C(\nu T+\lambda^{-1})\bigl(\tilde C_1\Delta t +\tilde C_2(h^2+\nu\delta) \bigr)
 \end{align}
Now observing that due to~(\ref{eq:error_steady_simplified_linear}) we have
$$
\|z_h^j - q(t_j) \|_0^2 \le \nu^2 M_{2,1}^2 (h^2+\nu\delta)\le \nu^2 C_2(h^2+\nu\delta) \le (\nu/T) \tilde C_2(h^2+\nu\delta),
$$
for $j=0,1,\ldots,N$, applying the triangle inequality in~(\ref{a_ver_si_esta}),  we finally reach~(\ref{cota_prefinal_mod}).
\end{proof}
\begin{remark}
Let us observe that any initial approximation for the velocity such that $\|\tilde \bv_h^0-\bv(0)\|_0=O(h^2)$ and
any initial approximation for the pressure satisfying $\|\nabla r_h^0\|_0=O(1)$ (which includes the choice $q_h^0=0$)
keep the optimal rate of convergence for the pressure $O(h^2+\Delta t)$ for  the  Euler non-incremental method
or $O(h^2+\delta +\Delta t)$ for the modified Euler non-incremental method.

We also observe that the error bounds (\ref{cota_th1_l2}) and~(\ref{cota_th1_H1})
for the Euler non-incremental method hold under assumption (\ref{eq:cond_pidomask}),
$h^2/(\nu\rho_1^2)\le \Delta t $. This means that for the Euler non-incremental method  $\Delta t =O(h)$ could be a possible choice.

On the other hand, the bounds (\ref{cota_th1_l2}) and~(\ref{cota_th1_H1}) and (\ref{cota_prefinal_mod})
 for the modified Euler non-incremental method hold only under assumption
$\Delta t\le \delta$. Then, for the modified Euler non-incremental one can choose $\Delta t$ as small as possible, and, in particular, one can make $\Delta t \rightarrow 0$.
\end{remark}
\section{Navier-Stokes equations}
We now consider the following initial value problem associated with the Navier-Stokes equations.
\begin{align}
\label{NS} \partial_t\bu -\nu \Delta \bu + (\bu\cdot\nabla)\bu + \nabla p &= \bff &&\text{in }\ (0,T)\times\Omega,\nonumber\\
\nabla \cdot \bu &=0&&\text{in }\ (0,T)\times\Omega,\\
\bu(0, \cdot) &= \bu_0(\cdot)&&\text{in }\ \Omega,\nonumber
\end{align}
and its discretization by the modified semi implicit Euler non-incremental method,
\begin{align}\label{eq:ns_non_tilde}
\left(\frac{\tilde \bu_h^{n+1}-\tilde \bu_h^n}{\Delta t},{\bs\chi}_h\right)+\nu(\nabla \tilde \bu_h^{n+1},\nabla {\bs\chi}_h)+
&(B(\tilde \bu_h^{n},\tilde \bu_h^{n}),{\bs\chi}_h)+(\nabla p_h^n,{\bs \chi}_h)
\nonumber\\
&{}=({\mbf g}^{n+1},{\bs\chi}_h),\qquad \forall {\bs\chi}_h\in V_h\qquad\\
(\nabla \cdot \tilde \bu_h^{n+1},\psi_h)+\delta(\nabla p_h^{n+1},\nabla \psi_h)&=0,\qquad \forall \psi_h\in Q_h,\nonumber
\end{align}
together with the initial condition to be specified later.
In~(\ref{eq:ns_non_tilde}) and in the sequel, $B(\cdot,\cdot)$ denotes the
following bilinear form
$$
B(\bv,\bw)=\bu\cdot\nabla\bw + \frac{1}{2}(\nabla\cdot \bv)\bw,
\qquad \bv,\bw\in V.
$$
Notice the well-known skew-symmetric property,
\begin{equation}
\label{eq:skew}
(B(\bv,\bw),{\bf y})=-(B(\bv,{\bf y}),\bw),\qquad \bv,\bw,{\bf y}\in V,
\end{equation}
so that in particular, $(B(\bv,\bw),\bw)=0$.

The numerical approximation $(\tilde \bu_h^n,p_h^n)$ of (\ref{eq:ns_non_tilde}) will be compared with the solution $(\tilde \bv_h^n,q_h^n)$
of~(\ref{eq:eu_non_tilde})-(\ref{eq:eu_non_tilde2}) for
\begin{equation}
\label{eq:ns_lag}
\bg = {\bf f} - B(\bu,\bu).
\end{equation}
On the other hand, along this section we apply to  $(\tilde \bv_h^n,q_h^n)$ the error bounds obtained in the previous section where  $(\tilde \bv_h^n,q_h^n)$
is compared with the stabilized
Stokes approximation $({\mbf s}_h^n,z_h^n)$ defined in (\ref{eq:pro_stokes})-(\ref{eq:pro_stokes2}) for
\begin{equation}\label{eq:lag}
\hat\bg=\bg-{\bu}_t.
\end{equation}
Whenever $\left\| \bg_t\right\|^2$ is integrable
in~$(0,T]$, i.e. the constant $\hat K_3^2$ in (\ref{eq:cota_gt}) is finite, this approximation will satisfy the error bounds (\ref{cota_th1_l2}), (\ref{cota_th1_H1}) and (\ref{cota_prefinal_mod}).

In the rest of this section we shall assume that ${\bf f},{\bf f}_t,{\bf f}_{tt}\in L^2(0,T]$ and that
$\bv=\bu$, $q=p$ satisfies the bounds~(\ref{eq:M1-M2}--\ref{eq:cota_vtt}). Since we now prove $\hat K_3^2$ in (\ref{eq:cota_gt}) is finite
 all appearances of the constants in~(\ref{eq:M1-M2}--\ref{eq:cota_gt}) will be for $\bv=\bu$ and $q=p$.

To prove $\hat K_3^2$ is finite we first observe that $\bg_t = {\bf f}_t - B(\bu,\bu)_t\in L^2(0,T)$ if
$\bu\cdot \nabla \bu_t \in L^2(0,T)$ and~$\bu_t\nabla \bu\in L^2(0,T)$.
We now show that this is so for the more difficult case~$d=3$. Applying~(\ref{eq:agmon}) we have
$$
\left\| \bu\cdot \nabla \bu_t \right\|_0 \le
\left\| \bu\right\|_\infty\left\| \nabla \bu_t \right\|_0
\le c_{\mathrm{A}} \left\| \bu\right\|^{1/2}_1 \left\|\bu\right\|_2^{1/2}\left\|\nabla \bu_t\right\|_0
\le c_{\mathrm{A}} M_1^{1/2}M_2^{1/2}\|\nabla \bu_t\|_0.
$$
Similarly, applying H\"older's inequality with~$p=d$ and
$q=d/(d-1)$, and then~(\ref{eq:sob}) and~(\ref{eq:parti_ineq}), we have
\begin{align*}
\left\| \bu_t\cdot \nabla \bu \right\|_0 \le
\left\| \bu_t\right\|_{L^6}\left\| \nabla \bu \right\|_{L^3}
&\le c_{1}^{3/2}\left\|\nabla\bu_t\right\|_0 \left\|\nabla \bu\right\|_0^{1/2}
\left\|\bu\right\|_2^{1/2}\nonumber\\
&\le c_{1}^{3/2}M_1^{1/2}M_2^{1/2}
\left\|\nabla\bu_t\right\|_0,
\end{align*}
so that $\left\| \bu\cdot\nabla \bu_t\right\|_{L^2(0,T)}+\left\| \bu_t\cdot\nabla \bu\right\|_{L^2(0,T)}\le C\nu (M_1M_2)^{1/2}K_{3,2}$
and consequently $\hat K_3^2$ in (\ref{eq:cota_gt}) is finite.
 We can state the following result.
\begin{theorem}\label{Th3} Let $(\bu,p)$ be the solution of (\ref{NS}) and let $(\tilde \bv_h^n,q_h^n)$  be the solution
of (\ref{eq:eu_non_tilde})-(\ref{eq:eu_non_tilde2}) with $\bg$ defined in (\ref{eq:ns_lag}) and $(\tilde \bv_h^0,q_h^0)=({\mbf s}_h^0,z_h^0)$.
Under the assumptions of Theorem~\ref{Th1} the following bounds hold
\begin{eqnarray}\label{cota_th1_l2_bis}
&&\max_{0\le t_n\le T}\|\tilde \bv_h^n-\bu(t_n)\|_0^2\le C_1 (\Delta t)^2+C_2 (h^4+(\nu\delta)^2),
\end{eqnarray}
where $C_1$ and $C_2$ are the constants in (\ref{laC1}) and (\ref{laC2}), and, assuming for simplicity that~(\ref{delta<T}) holds,
\begin{align}\label{cota_th1_h1_bis}
&\Delta t\sum_{j=1}^{n}\bigl(\nu\|\nabla (\tilde \bv_h^j-\bu(t_j))\|_0^2+\delta \|q_h^j -p(t_j)\|_0^2\bigr)\le \tilde C_1\Delta t + \tilde C_2(h^2+\nu\delta),\\
&\Delta t\sum_{j=1}^{n}t_j\bigl(\nu\|\nabla (\tilde \bv_h^j-\bu(t_j))\|_0^2+\delta \|q_h^j -p(t_j)\|_0^2\bigr)\le  C_1t_n\Delta t^2
\nonumber\\
&\quad+\tilde C_2 t_n(h^2+\nu\delta).
\label{cota_th1_H1_bis}
\end{align}
where $\tilde C_1$ and $\tilde C_2$ are the constants in (\ref{laCtilde1}) and (\ref{laCtilde2}).
\end{theorem}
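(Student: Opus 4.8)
The proof rests on the fact that Theorem~\ref{Th3} is just a specialization of Theorems~\ref{Th1} and~\ref{Th2}. Since $\nabla\cdot\bu=0$, the convection term reduces to $B(\bu,\bu)=(\bu\cdot\nabla)\bu$, so the exact solution $(\bu,p)$ of~(\ref{NS}) is also a solution of the transient Stokes problem~(\ref{eq:evo_stokes}) with forcing $\bg=\bff-B(\bu,\bu)$ and initial datum $\bu_0$. Under the standing assumption that $(\bv,q)=(\bu,p)$ satisfies~(\ref{eq:M1-M2}--\ref{eq:cota_vtt}) and that $\bff,\bff_t,\bff_{tt}\in L^2(0,T]$, the constant $\hat K_3^2$ in~(\ref{eq:cota_gt}) is finite (as just shown), so every hypothesis of Theorems~\ref{Th1} and~\ref{Th2} holds with $(\bv,q)=(\bu,p)$; consequently~(\ref{cota_th1_l2}), (\ref{cota_th1_H1}) and~(\ref{cota_prefinal_mod}), and the intermediate estimates used to prove them (notably~(\ref{eq:auxa2bis}), (\ref{cota_pre_l2_45}) and~(\ref{eq:conlema1_1})), are available with $\bv=\bu$, $q=p$. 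The plan is to read off~(\ref{cota_th1_l2_bis})--(\ref{cota_th1_H1_bis}) from these.

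First I would dispose of the initial-data terms. With the prescribed $(\tilde\bv_h^0,q_h^0)=({\mbf s}_h^0,z_h^0)$ one has $\tilde\be_h^0=\tilde\bv_h^0-{\mbf s}_h^0=0$ and $r_h^0=q_h^0-z_h^0=0$, while $\|\tilde\bv_h^0-\bu(0)\|_0=\|{\mbf s}_h(0)-\bu_0\|_0$ is bounded, by the third estimate in~(\ref{eq:error_steady_simplified_linear}) at $t=0$, by $C(h^2+\nu\delta)\bigl(\|\bu_0\|_2+\nu^{-1}\|p(0)\|_1\bigr)\le C(h^2+\nu\delta)M_{2,1}$, so that $\|\tilde\bv_h^0-\bu(0)\|_0^2\le C_2(h^4+(\nu\delta)^2)$ since $C_2\ge CM_{2,1}^2$. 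Hence every term containing $\|\tilde\bv_h^0-\bu(0)\|_0$, $\|\tilde\be_h^0\|_0$ or $\|\nabla r_h^0\|_0$ in~(\ref{cota_th1_l2}), (\ref{cota_th1_H1}), (\ref{cota_prefinal_mod}) and~(\ref{eq:auxa2bis}) is absorbed into the $C_1$-, $C_2$-, $\tilde C_1$- or $\tilde C_2$-contributions.

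Then~(\ref{cota_th1_l2_bis}) follows for $n\ge1$ on dividing~(\ref{cota_th1_l2}) by $t_n$ and writing $\tilde\bv_h^n-\bu(t_n)=\tilde\be_h^n+({\mbf s}_h^n-\bu(t_n))$, the last term being controlled by the third estimate of~(\ref{eq:error_steady_simplified_linear}) and~(\ref{eq:u-infa}); the case $n=0$ is the previous paragraph. The velocity parts of~(\ref{cota_th1_h1_bis}) and of~(\ref{cota_th1_H1_bis}) come, respectively, from the energy estimate~(\ref{cota_th1_H1}) and from the display~(\ref{eq:auxa2bis}) (which already controls $\Delta t\sum_j t_j\nu\|\nabla\tilde\be_h^j\|_0^2$), again adding and subtracting ${\mbf s}_h^n$ and using~(\ref{eq:error_steady_simplified_linear}). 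For the pressure parts, writing $q_h^j-p(t_j)=r_h^j+(z_h^j-p(t_j))$, the term $\delta\|z_h^j-p(t_j)\|_0^2$ is bounded by the second estimate of~(\ref{eq:error_steady_simplified_linear}) and $\delta\le T$, while $\delta\|r_h^j\|_0^2$ is handled exactly as in the proof of Theorem~\ref{Th2}: apply Lemma~\ref{lema_presion} to $r_h^j\in Q_h$ and use the error equation~(\ref{eq:er3_orig}) to bound $\sup_{\bs\chi_h}(r_h^j,\nabla\cdot\bs\chi_h)/\|\bs\chi_h\|_1$ by $\|\tau_h^j\|_{-1}+C\|z_h^{j+1}-z_h^j\|_0+\|(\tilde\be_h^{j+1}-\tilde\be_h^j)/\Delta t\|_{-1}+C\nu\|\nabla\tilde\be_h^{j+1}\|_0$; after summing in $j$ (without the weight $t_j$ for~(\ref{cota_th1_h1_bis}), with it for~(\ref{cota_th1_H1_bis})) everything reduces to quantities already estimated in Section~4 through~(\ref{cota_pre_l2_4}), (\ref{cota_pre_l2_45}), (\ref{eq:cons_a_222}), (\ref{unamas}) and~(\ref{eq:segun_esti}), and the factor $\delta\le T$ together with~(\ref{eq:cond_delta}) reorganizes the constants into the form~(\ref{laCtilde1})--(\ref{laCtilde2}).

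There is no substantial obstacle here, since all the genuine analysis was carried out in Sections~3 and~4; the only point needing care is the pressure estimate of the preceding paragraph, where one must re-run the argument of Theorem~\ref{Th2} both with and without the time weight $t_j$ and carry the extra factor $\delta$ through the estimates so that the constants collapse to exactly~(\ref{laCtilde1})--(\ref{laCtilde2}). The genuinely nonlinear part of the analysis --- comparing the Navier--Stokes discretization~(\ref{eq:ns_non_tilde}) with the linear scheme~(\ref{eq:eu_non_tilde})--(\ref{eq:eu_non_tilde2}) by a stability-plus-consistency argument restricted to $h$-dependent thresholds --- lies outside Theorem~\ref{Th3} and is treated separately.
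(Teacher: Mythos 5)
Your overall strategy is the same as the paper's: Theorem~\ref{Th3} is read off from Theorem~\ref{Th1} (and the intermediate estimate~(\ref{eq:auxa2bis})) applied with $\bg=\bff-B(\bu,\bu)$, after noting that the choice $(\tilde\bv_h^0,q_h^0)=({\mbf s}_h^0,z_h^0)$ gives $\tilde\be_h^0=0$ and $\nabla r_h^0=0$ while $\|\tilde\bv_h^0-\bu(0)\|_0^2\le CM_{2,1}^2(h^4+\delta^2\nu^2)$ by~(\ref{eq:error_steady_simplified_linear}) and~(\ref{eq:u-infa}), and then using the decompositions $\tilde\bv_h^j-\bu(t_j)=\tilde\be_h^j+({\mbf s}_h^j-\bu(t_j))$ and $q_h^j-p(t_j)=r_h^j+(z_h^j-p(t_j))$ together with~(\ref{eq:error_steady_simplified_linear}); this is exactly the paper's (very terse) proof. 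The one place you genuinely deviate is the pressure contribution in~(\ref{cota_th1_h1_bis}) and~(\ref{cota_th1_H1_bis}), where you propose to re-run the duality argument of Theorem~\ref{Th2} (Lemma~\ref{lema_presion} plus the error equation~(\ref{eq:er3_orig})) to control $\delta\|r_h^j\|_0^2$. That detour is both unnecessary and risky: in the unweighted case the term $\Delta t\sum_j\bigl\|(\tilde\be_h^{j+1}-\tilde\be_h^j)/\Delta t\bigr\|_{-1}^2$ is \emph{not} among the quantities estimated in Section~4 (only the $t_{j+1}$-weighted sum is controlled, via~(\ref{unamas})), and the prefactor $\delta$ does not make up for the missing weight when $\delta\gg\Delta t$; in the weighted case, multiplying the bound~(\ref{cota_prefinal_mod}) by $\delta$ produces $\delta C_3\Delta t$, which is not dominated by the claimed $C_1t_n\Delta t^2$ for small $\Delta t$. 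The intended route --- consistent with the appearance of $\hbox{\rm diam}(\Omega)^2$ in $\tilde C_2$ in~(\ref{laCtilde2}) --- is simply that $q_h^j-p(t_j)$ has zero mean, so the Poincar\'e--Wirtinger inequality gives $\|q_h^j-p(t_j)\|_0\le C\,\hbox{\rm diam}(\Omega)\,\|\nabla(q_h^j-p(t_j))\|_0$, and the pressure terms are absorbed directly into the gradient bounds already present in~(\ref{cota_th1_H1}) and~(\ref{eq:auxa2bis}). With that one-line replacement your argument coincides with the paper's proof; as you correctly note, all the genuinely nonlinear analysis lies outside this theorem.
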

\begin{proof}
The error bounds  (\ref{cota_th1_l2_bis}),  (\ref{cota_th1_h1_bis}) follow from (\ref{cota_th1_l2}) and (\ref{cota_th1_H1}) respectively
taking into account that applying (\ref{eq:error_steady_simplified_linear}) and
(\ref{eq:u-infa}) $\|\tilde \bv_h^0-\bu(0)\|_0^2\le C M_{2,1}^2 (h^4+\delta^2\nu^2)$ and $\nu\|\nabla(\tilde \bv_h^0-\bu(0))\|_0^2\le C \nu M_{2,1}^2 (h^2+\delta\nu)$ and that $\|\nabla r_h^0\|_0=0$.
The error bound (\ref{cota_th1_H1_bis}) follows from~(\ref{eq:auxa2bis}), the decomposition
$\tilde \bv_h^j-\bu(t_j)=\tilde\be_h^j +{\mathbf s}_h(t_j)-\bu(t_j)$ and~$q_h^j-p(t_j))=r_h^j + z_h(t_j)-p(t_j)$ and the error
bound~(\ref{eq:error_steady_simplified_linear}).
\end{proof}

The error bounds for the discretization~(\ref{eq:ns_non_tilde}) will be
obtained as a consequence of several previous results that we now
state. The first one is a discrete Gronwall lemma whose proof can be
easily obtained by induction (see e.g., \cite{heyran4}).

\begin{lema}\label{le:gronwall}
Let $k$, $B$, and $a_n,b_n,c_n,\gamma_n$ be nonnegative numbers such that
$$
a_n+k\sum_{j=0}^n b_j\le k\sum_{j=0}^{n-1}\gamma_j a_j+k\sum_{j=0}^nc_j+B,\quad n\ge 0.
$$
Then, the following bound holds
$$
a_n+k\sum_{j=0}^n b_j\le \exp\biggl(k\sum_{j=0}^{n-1}\gamma_j\biggr)\biggl(k\sum_{j=0}^n c_j +B\biggr),\quad n\ge 0.
$$
\end{lema}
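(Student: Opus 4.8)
The plan is a direct induction on $n$; the statement is the classical discrete Gronwall estimate (cf.~\cite{heyran4}), so the only points that need care are the monotonicity of the forcing data and the telescoping identity that produces the exponential. First I would abbreviate $A_n=a_n+k\sum_{j=0}^n b_j$ and $G_n=k\sum_{j=0}^n c_j+B$, so that the hypothesis reads $A_n\le k\sum_{j=0}^{n-1}\gamma_j a_j+G_n$. Since the $b_j$ are nonnegative one has $a_j\le A_j$, and since the $c_j$ are nonnegative the sequence $(G_n)_{n\ge 0}$ is nondecreasing; hence the hypothesis implies the cleaner recursion
$$
A_n\le k\sum_{j=0}^{n-1}\gamma_j A_j+G_n,\qquad n\ge 0,
$$
where, as usual, an empty sum is read as zero (so for $n=0$ this is simply $A_0\le G_0$).

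Next I would prove by (strong) induction that $A_n\le P_n G_n$, where $P_n:=\prod_{j=0}^{n-1}(1+k\gamma_j)$, with the convention that the empty product equals $1$. The base case $n=0$ is the inequality $A_0\le G_0$ just noted. For the inductive step, assuming $A_j\le P_j G_j$ for all $j<n$ and using $G_j\le G_n$ together with $\gamma_j P_j\ge 0$, one obtains
$$
A_n\le k\sum_{j=0}^{n-1}\gamma_j P_j G_j+G_n\le G_n\Bigl(1+k\sum_{j=0}^{n-1}\gamma_j P_j\Bigr),
$$
and the bracketed factor is exactly $P_n$, because $P_{j+1}-P_j=k\gamma_j P_j$ telescopes over $j=0,\dots,n-1$ to $P_n-1=k\sum_{j=0}^{n-1}\gamma_j P_j$. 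Finally, the elementary bound $1+x\le e^x$ applied factor by factor gives $P_n\le\exp\bigl(k\sum_{j=0}^{n-1}\gamma_j\bigr)$, and combining this with $A_n\le P_nG_n$ yields precisely the claimed inequality.

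I do not expect any real obstacle: the argument closes without a smallness condition on $k\gamma_n$ precisely because the first sum in the hypothesis runs only to $n-1$, so $a_n$ (equivalently $A_n$) never appears on the right with a positive coefficient. The only thing to be careful about is to propagate the monotone majorant $G_n$ rather than $G_j$ through the estimate, which is legitimate exactly because the $c_j$ are nonnegative; everything else is bookkeeping with empty sums and products.
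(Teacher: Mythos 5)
Your proof is correct and follows exactly the route the paper indicates: the paper omits the argument entirely, remarking only that it ``can be easily obtained by induction (see e.g., \cite{heyran4})'', and your strong induction with $P_n=\prod_{j=0}^{n-1}(1+k\gamma_j)$, the telescoping identity $P_n-1=k\sum_{j=0}^{n-1}\gamma_j P_j$, and the bound $1+x\le e^x$ is precisely that standard induction, carried out with the right care about empty sums, the monotone majorant $G_n$, and the fact that $a_n$ never appears on the right-hand side.
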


\begin{remark}\label{re:gronwall} The statement of Lemma~\ref{le:gronwall} above is very similar to~Lemma~5.1 in~\cite{heyran4}, where the sum involving the terms
$\gamma_ja_j$ includes also the term $\gamma_n a_n$. In order to extend the analysis in the present paper to~the fully
implicit backward Euler method, Lemma~\ref{le:gronwall} must be replaced by~\cite[Lemma~5.1]{heyran4}.
\end{remark}

\begin{lema}\label{le:nonlin} For~$\bv,\bw,{\bs\phi}\in V$  the following bounds hold
\begin{align}
\label{eq:nonlin1}
\|B(\bv,\bv)-B(\bw,\bw)\|_{0}
\le&\bigl(\|\nabla\bv\|_{L^{2d/(d-1)}} +
\|\nabla \bw\|_{L^{2d/(d-1)}}\bigr) \left\|\be\right\|_{L^{2d}}
\nonumber\\
&{}+
\bigl(\left\|\bv\right\|_\infty
+\left\|\bw\right\|_\infty\bigr)\left\|\nabla\be\right\|_0.
\end{align}
\begin{align}
\label{eq:nonlin2}
|(B(\bv,\bv)-B(\bw,\bw),{\bs\phi})|
\le\left\|\be\right\|_{0}\Bigl(&\bigl(\|\nabla\bv\|_{L^{2d/(d-1)}} +
\|\nabla \bw\|_{L^{2d/(d-1)}}\bigr) \left\|{\bs\phi}\right\|_{L^{2d}}
\nonumber\\
&{}+
\bigl(\left\|\bv\right\|_\infty
+\left\|\bw\right\|_\infty\bigr)\left\|\nabla{\bs\phi}\right\|_0\Bigr),
\end{align}
where ${\bs e}=\bv-\bw$.
\end{lema}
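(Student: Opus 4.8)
The plan is to estimate the difference $B(\bv,\bv)-B(\bw,\bw)$ by the standard telescoping trick: write
$$
B(\bv,\bv)-B(\bw,\bw)=B(\bv-\bw,\bv)+B(\bw,\bv-\bw)=B(\be,\bv)+B(\bw,\be),
$$
using bilinearity of $B$ and the notation $\be=\bv-\bw$. Then I would recall the definition $B(\bz,\by)=\bz\cdot\nabla\by+\frac12(\nabla\cdot\bz)\by$ and bound each of the four resulting terms in $L^2(\Omega)$. For the first bound~\eqref{eq:nonlin1} I would group the terms that put a derivative on the ``outer'' argument versus the ``inner'' one: the contributions $\be\cdot\nabla\bv$ and $\frac12(\nabla\cdot\be)\bv$ (plus the analogous $\be\cdot\nabla\bw$-type term coming from $B(\bw,\be)$? no---careful: $B(\bw,\be)$ has $\bw\cdot\nabla\be$ and $\frac12(\nabla\cdot\bw)\be$), so in fact the terms carrying $\nabla\bv$ or $\nabla\bw$ are $\be\cdot\nabla\bv$ and $\frac12(\nabla\cdot\bw)\be$ and likewise $\frac12(\nabla\cdot\be)\bv$ contains $\nabla\be$... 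Let me reorganize: the clean split is that $B(\be,\bv)$ carries derivatives of $\bv$ and of $\be$, while $B(\bw,\be)$ carries derivatives of $\bw$ and of $\be$. So I would further use the skew-symmetry identity or simply keep all six scalar products and apply H\"older.

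Concretely, for the terms with a derivative on $\bv$ or $\bw$ (namely $\be\cdot\nabla\bv$, $(\nabla\cdot\bw)\be$, and after integrating by parts the part of $B(\be,\bv)$ that reads $\frac12(\nabla\cdot\be)\bv$, etc.) I would apply H\"older's inequality~\eqref{eq:tri_holder} with exponents $p_2=2d/(d-1)$ on $\nabla\bv$ (or $\nabla\bw$) and $p_3=2d$ on $\be$; this produces the term $\bigl(\|\nabla\bv\|_{L^{2d/(d-1)}}+\|\nabla\bw\|_{L^{2d/(d-1)}}\bigr)\|\be\|_{L^{2d}}$. For the terms with a derivative on $\be$ (namely $\bw\cdot\nabla\be$ and $\frac12(\nabla\cdot\be)\bv$---but to have $\nabla\be$ appear with an $L^2$ norm I should put the $L^\infty$ norm on $\bv$ or $\bw$), I would instead use H\"older with $p_1=\infty$, $p_2=p_3=2$, giving $\bigl(\|\bv\|_\infty+\|\bw\|_\infty\bigr)\|\nabla\be\|_0$. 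Summing these contributions and absorbing the numerical factors $\frac12$ into the (implicit) constant $1$---or rather keeping the stated form---yields~\eqref{eq:nonlin1}. The one subtlety to watch is that $\frac12(\nabla\cdot\be)\bv$ literally has $\nabla\be$ paired with $\bv$, so it falls in the second group, while $\frac12(\nabla\cdot\be)\bw$ likewise; and $\bv\cdot\nabla\be$ would come only from $B(\bv,\be)$, which is \emph{not} one of our two terms---so I must double-check that the decomposition $B(\be,\bv)+B(\bw,\be)$ is the one used, in which case the derivative-on-$\be$ terms are $\bw\cdot\nabla\be$ and $\frac12(\nabla\cdot\bw)\be$ (no $\nabla\be$ in the latter) plus $\frac12(\nabla\cdot\be)\bv$ from $B(\be,\bv)$. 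That bookkeeping is the only place an error could creep in.

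For~\eqref{eq:nonlin2} I would not re-estimate from scratch but rather use the decomposition $B(\be,\bv)+B(\bw,\be)$ tested against $\bs\phi$, integrate by parts to move every derivative off $\be$ (so that $\be$ appears only in $L^2$, never differentiated), and then apply H\"older exactly as above but now with $\|\be\|_0$ factored out and the two remaining factors distributed between $\nabla\bv$ or $\nabla\bw$ (in $L^{2d/(d-1)}$) and $\bs\phi$ (in $L^{2d}$), or between $\bv$ or $\bw$ (in $L^\infty$) and $\nabla\bs\phi$ (in $L^2$). The skew-symmetry~\eqref{eq:skew} is precisely what makes this possible: $(B(\bw,\be),\bs\phi)=-(B(\bw,\bs\phi),\be)$, and similarly one rewrites $(B(\be,\bv),\bs\phi)$ after an integration by parts so that $\be$ is undifferentiated. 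Collecting terms gives~\eqref{eq:nonlin2}. The main (very mild) obstacle is just the careful accounting of which term contributes to which of the two groups on the right-hand side, together with the integration-by-parts step for~\eqref{eq:nonlin2}; no genuinely hard estimate is involved, all the analytic content being already packaged in the H\"older inequality~\eqref{eq:tri_holder}.
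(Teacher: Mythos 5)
Your proposal is correct and follows essentially the same route as the paper: the telescoping identity $B(\bv,\bv)-B(\bw,\bw)=B(\be,\bv)+B(\bw,\be)$, H\"older with exponents $(2d,\,2d/(d-1),\,2)$ or $(\infty,2,2)$ term by term for~\eqref{eq:nonlin1} (with $\frac12(\nabla\cdot\be)\bv$ absorbed into the $\|\bv\|_\infty\|\nabla\be\|_0$ group and $\frac12(\nabla\cdot\bw)\be$ into the $\|\nabla\bw\|_{L^{2d/(d-1)}}\|\be\|_{L^{2d}}$ group, exactly as in the paper), and skew-symmetry plus integration by parts to move all derivatives off $\be$ for~\eqref{eq:nonlin2}. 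The bookkeeping you flag as the only delicate point is resolved the way you suspect, so no further changes are needed.
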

\begin{proof}
From the identity
\begin{equation}
\label{eq:unadifB}
B(\bv,\bv)-B(\bw,\bw)=B(\be,\bv)+ B(\bw,\be),
\end{equation}
and applying H\"older inequality we have
\begin{align*}
|B(\be,\bv)| &\le \left\|\be \right\|_{L^{2d}}\|\nabla\bv\|_{L^{2d/(d-1)}}
+\frac{1}{2}\left\|\nabla\cdot\be\right\|_0\left\|\bv\right\|_\infty,
\nonumber\\
|B(\bw,\be)| &\le \left\|\bw \right\|_\infty\|\nabla\be\|_0
+\frac{1}{2}\left\|\nabla\cdot\bw\right\|_{L^{2d/(d-1)}}\left\|\be\right\|_{L^{2d}},
\end{align*}
and the bound~(\ref{eq:nonlin1}) follows.
To prove~(\ref{eq:nonlin2}),
we multiply~(\ref{eq:unadifB}) by~${\bs\phi}\in H^1_0$ and integrate in~$\Omega$, integrating by parts adequately and using the skew-symmetry property~(\ref{eq:skew}) we have
\begin{align*}
(B(\bv,\bv)-B(\bw,\bw),{\bs\phi})=&
\frac{1}{2}\bigl((\be\cdot\nabla \bv,{\bs\phi}) -( \be\cdot\nabla{\bs\phi},
 \bv)\bigr)\nonumber\\
 &{} + (B(\bw,{\bs\phi}),\be).
\end{align*}
and the bound follows by applying H\"older inequality~(\ref{eq:tri_holder}).
\end{proof}
\begin{lema}\label{le:cotas_threshold} Let  $(\tilde \bv_h^n,q_h^n)$  be the solution
of (\ref{eq:eu_non_tilde})-(\ref{eq:eu_non_tilde2}) with $\bg$ defined in (\ref{eq:ns_lag})
and $(\tilde \bv_h^0,q_h^0)=({\mbf s}_h^0,z_h^0)$. Under the assumptions of Theorem~\ref{Th1},
 and assuming also
\begin{equation}\label{eq:cota_deltat_up}
\nu \delta \le c_M\hbox{\rm diam}(\Omega) h,
\end{equation}
for a scale-invariant $c_M>0$,
there exists a scale invariant constant $c_{\mathrm{r}}>0$ depending on~the constants~$c_{\mathrm{inv}}$, $c_P$ and~$c_A$  in~(\ref{inv}), (\ref{eq:poincare2}) and~(\ref{eq:agmon}), respectively,
and constant $C_{\mathrm{th}}>0$ depending also on the constants
in~(\ref{cota_th1_h1_bis}), $\nu^{-1}$, $T$, the constants  $M_1$ and $M_2$ in (\ref{eq:M1-M2}), the
constant~$c_{\mathrm{int}}$
in~(\ref{cota_inter}), and the constant $\Lambda$ in~(\ref{eq:quasi}) (and also on $\max_{0\le t\le T}\|\bu(t)\|_0$ in the case $d=2$)
such that the following bounds hold for
all sequences $(\bw_h^n)_{n=0}^{N=T/\Delta t}$ in~$V_h$,
\begin{align*}
\Delta t\sum_{j=0}^n \| \nabla \bw_h^j\|_{L^{2d/(d-1)}}^2
 &\le\frac{c_{\mathrm{r}}\Lambda}{h} \Delta t\sum_{j=0}^n
{\|\nabla( \bw_h^j - \tilde \bv_h^j)\|_0^2} +
C_{\mathrm{th}},
\nonumber\\
\Delta t\sum_{j=0}^n\|  \bw_h^j\|_{\infty}^2&\le
\biggl(\frac{c_{\mathrm{r}}\Lambda}{h} \Delta t\sum_{j=0}^n{\| \nabla( \bw_h^j - \tilde \bv_h^j)\|_0^2}+
C_{\mathrm{th}}\biggr)\left|\Omega\right|^{(3-d)/{(2d)}},
\end{align*}
and $n=0,1,\ldots,N=T/\Delta t$.
\end{lema}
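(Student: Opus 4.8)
The plan is to control each $\bw_h^j$ by the triangle inequality against $\tilde\bv_h^j$, then against the Lagrange interpolant $I_h\bu(t_j)$, and finally against $\bu(t_j)$ itself, passing to $L^2$-based norms on the finite-element pieces via inverse estimates and using Sobolev, Agmon and interpolation estimates on the continuous part. Concretely I would set $\bs\xi_h^j=\bw_h^j-\tilde\bv_h^j\in V_h$ and $\bs\eta_h^j=\tilde\bv_h^j-I_h\bu(t_j)\in V_h$ and write $\bw_h^j=\bs\xi_h^j+\bs\eta_h^j+(I_h\bu(t_j)-\bu(t_j))+\bu(t_j)$.

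For the first bound I would use that $\nabla\bs\xi_h^j$ is a piecewise polynomial, apply the inverse inequality~(\ref{inv}) cellwise with $m=l=0$, $p=2d/(d-1)$, $q=2$ (so the power of $h_K$ is $-1/2$), raise to the power $p$, sum over $K\in{\mathcal T}_h$ using $\sum_K b_K^{p/2}\le(\sum_K b_K)^{p/2}$ for $p\ge2$, and invoke quasi-uniformity~(\ref{eq:quasi}), obtaining $\|\nabla\bs\xi_h^j\|_{L^{2d/(d-1)}}\le c_{\mathrm{inv}}(\Lambda/h)^{1/2}\|\nabla\bs\xi_h^j\|_0$, and likewise for $\bs\eta_h^j$. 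Squaring, multiplying by $\Delta t$ and summing, the $\bs\xi_h^j$-contribution is exactly $\tfrac{c_{\mathrm r}\Lambda}{h}\Delta t\sum_j\|\nabla(\bw_h^j-\tilde\bv_h^j)\|_0^2$; for the $\bs\eta_h^j$-contribution I would bound $\|\nabla\bs\eta_h^j\|_0\le\|\nabla(\tilde\bv_h^j-\bu(t_j))\|_0+\|\nabla(\bu(t_j)-I_h\bu(t_j))\|_0$ and use~(\ref{cota_th1_h1_bis}) of Theorem~\ref{Th3} together with the interpolation estimate~(\ref{cota_inter}) and the regularity bounds~(\ref{eq:M1-M2}). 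The remaining terms $\|\nabla(\bu(t_j)-I_h\bu(t_j))\|_{L^{2d/(d-1)}}$ and $\|\nabla\bu(t_j)\|_{L^{2d/(d-1)}}$ I would estimate by~(\ref{cota_inter}) in $W^{1,2d/(d-1)}$ and by~(\ref{eq:parti_ineq}), both $O(M_2)$.

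For the $L^\infty$ bound I would proceed analogously: the cellwise inverse estimate with $p=\infty$, $q=2d$ (again power $-1/2$ of $h_K$) and quasi-uniformity give $\|v_h\|_\infty\le c_{\mathrm{inv}}(\Lambda/h)^{1/2}\|v_h\|_{L^{2d}}$ for $v_h\in V_h$, after which the scale-invariant Sobolev embedding~(\ref{eq:sob}), $\|v_h\|_{L^{2d}}\le c_s|\Omega|^{(3-d)/(2d)}\|v_h\|_1$ (admissible because $d\le3$), and the Poincar\'e inequality~(\ref{eq:poincare2}) apply; this is where the factor $|\Omega|^{(3-d)/(2d)}$ enters. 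Then the $\bs\xi_h^j$-piece gives $\tfrac{c_{\mathrm r}\Lambda}{h}\Delta t\sum_j\|\nabla(\bw_h^j-\tilde\bv_h^j)\|_0^2$ times $|\Omega|^{(3-d)/(2d)}$, the $\bs\eta_h^j$-piece is treated as before, $\|I_h\bu(t_j)-\bu(t_j)\|_\infty$ is handled by the $L^\infty$ interpolation estimate~(\ref{cota_inter}), and $\|\bu(t_j)\|_\infty$ by Agmon's inequality~(\ref{eq:agmon}) and~(\ref{eq:M1-M2}) (using also $\max_{0\le t\le T}\|\bu(t)\|_0$ when $d=2$, which explains that extra dependence of $C_{\mathrm{th}}$).

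The hard part — and the reason hypothesis~(\ref{eq:cota_deltat_up}) is imposed — is that the factor $h^{-1}$ produced by the inverse estimates multiplies $\Delta t\sum_j\|\nabla\bs\eta_h^j\|_0^2$, which by Theorem~\ref{Th3} is only $O(\Delta t+h^2+\nu\delta)$, not $O(h^2)$. To keep it $O(1)$, hence absorbable into $C_{\mathrm{th}}$, I would combine $\Delta t\le\delta$ from~(\ref{eq:cond_delta2}) with $\nu\delta\le c_M\hbox{\rm diam}(\Omega)h$ from~(\ref{eq:cota_deltat_up}), which give $\Delta t/h\le\delta/h\le c_M\hbox{\rm diam}(\Omega)/\nu$ and $\delta\le c_M\hbox{\rm diam}(\Omega)h/\nu$, so that $h^{-1}(\Delta t+h^2+\nu\delta)\le C(\nu^{-1}+h)$ is bounded. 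Finally, since the sums in Theorem~\ref{Th3} start at $j=1$, I would treat $j=0$ separately: because $\tilde\bv_h^0={\mbf s}_h^0$, the a priori bound~(\ref{eq:cotas1}) (equivalently~(\ref{eq:error_steady_simplified_linear})) gives $\|\nabla\tilde\bv_h^0\|_0=O(1)$, and the single extra term is absorbed into $C_{\mathrm{th}}$ again using $\Delta t/h=O(1)$. Tracking the constants then yields $c_{\mathrm r}$ depending only on $c_{\mathrm{inv}}$, $c_P$, $c_{\mathrm A}$ (and the scale-invariant Sobolev constant), and $C_{\mathrm{th}}$ depending on $\nu^{-1}$, $T$, $M_1$, $M_2$, $c_{\mathrm{int}}$, $\Lambda$, $c_M$, $\hbox{\rm diam}(\Omega)$, the constants in~(\ref{cota_th1_h1_bis}), and (for $d=2$) $\max_{0\le t\le T}\|\bu(t)\|_0$, as claimed.
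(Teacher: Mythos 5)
Your proposal is correct and follows essentially the same route as the paper: the identical four-term decomposition $\bw_h^j=(\bw_h^j-\tilde\bv_h^j)+(\tilde\bv_h^j-I_h\bu(t_j))+(I_h\bu(t_j)-\bu(t_j))+\bu(t_j)$, the inverse inequality~(\ref{inv}) with quasi-uniformity to trade $L^{2d/(d-1)}$ and $L^\infty$ norms of the finite element pieces for $h^{-1/2}$ times $L^2$-based norms, the interpolation bound~(\ref{cota_inter}), inequality~(\ref{eq:parti_ineq}) and Agmon for the continuous pieces, and~(\ref{cota_th1_h1_bis}) for $\tilde\bv_h^j-\bu(t_j)$. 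You in fact spell out two details the paper leaves implicit (why~(\ref{eq:cota_deltat_up}) together with $\Delta t\le\delta$ makes the $h^{-1}$-amplified error term $O(1)$, and the separate treatment of $j=0$), which only strengthens the argument.
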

\begin{proof} We start with the first bound. We
write
\begin{equation}
\label{ayu1}
\bw_{h}^j= (\bw_{h}^j - \tilde \bv_h^j) + (\tilde \bv_h^j - I_h(\bu(t_j))) +
(I_h(\bu(t_j))) - \bu(t_j)) + \bu(t_j)
\end{equation}
We notice that for~$p=2d/(d-1)$ and~$q=2$ we have
$$
d\biggl(\frac{1}{q}-\frac{1}{p}\biggr)=
d\biggl(\frac{1}{2}-\frac{d-1}{2d}\biggr)=\frac{1}{2},
$$
so that applying~(\ref{inv}) with~$m=1$, $p=2d/(d-1)$, $q=2$ and~$l=1$, and using~(\ref{eq:quasi}) and (\ref{ayu1})
we get
\begin{align*}
\left\| \nabla \bw_h^j\right\|_{L^{2d/(d-1)}}
\le& c_{\mathrm{inv}}
\frac{\| \bw_h^j - \tilde \bv_h^j\|_1+\| \tilde \bv_h^j-I_h(\bu(t_j))\|_1}{(h/\Lambda)^{1/2}}\nonumber\\
 &{}+
\left\| \nabla( I_h(\bu(t_j)) - \bu(t_j))\right\|_{L^{2d/(d-1)}}
+
\left\| \nabla\bu(t_j)\right\|_{L^{2d/(d-1)}}.
\end{align*}
We notice that due to the interpolation bound~(\ref{cota_inter})
we have
$$\left\| \nabla( I_h(\bu(t_j)) - \bu(t_j))\right\|_{L^{2d/(d-1)}}\le
c_{\mathrm{int}}h^{1/2}\left\|\bu(t_j)\right\|_{2}\le c_{\mathrm{int}}h^{1/2}M_2,
$$
and due to~(\ref{eq:parti_ineq}),
$\left\| \nabla\bu(t_j)\right\|_{L^{2d/(d-1)}}\le
(c_{1}\left\| \nabla\bu(t_j)\right\|_0\left\| \bu(t_j)\right\|_2)^{1/2}\le (c_{1}M_1M_2)^{1/2}$. The proof is finished
by writing $\tilde \bv_h^j-I_h(\bu(t_j))=(\tilde \bv_h^j-\bu(t_j))
+(\bu(t_j)-I_h(\bu(t_j)))$ and applying~(\ref{cota_inter}) and~(\ref{cota_th1_h1_bis}).

For the second bound we observe that from \cite[Lemma 4.4]{heyran1} it follows $\|\bw_h\|_\infty\le C h^{-1/2}\|\nabla \bw_h\|_1
\left|\Omega\right|^{(3-d)/(2d)}$,
where $C$ depends on~$c_{\mathrm{inv}}$ and~$c_A$, and then we argue as before.
\end{proof}

In the sequel, for sequences~$(\bw_h^n)_{n=0}^N$ of $N+1$ terms
in~$V_h$ we denote
\begin{align*}
\nrm (\bw_h^n)_{n=0}^N \nrm_{\delta,\Delta t}
=&\max_{0\le n \le N}
\Bigl(
\|\bw_h^{n}\|_0^2+\Delta t\sum_{j=0}^{n-1}\bigl(\nu\|\nabla\bw_h^{j+1}\|_0^2+{\delta}\|\nabla Z_h\bw_h^{j+1}\|_0^2\bigr)
\Bigr)^{1/2},
\end{align*}
where the mapping $Z_h:V_h\rightarrow Q_h$ is defined for~every~$\bw_h\in V_h$ as the solution of
$$
\delta(\nabla Z_h\bw_h,\nabla \psi_h)=
-(\nabla \cdot  \bw_h,\psi_h),\qquad \forall \psi_h\in Q_h.
$$


The following result establishes the stability
of~discretization~(\ref{eq:ns_non_tilde}) restricted to $h$-dependent
thresholds, a concept due to L\'opez-Marcos and Sanz-Serna~\cite{LPSS2} (see also \cite{LPSS3}).

\begin{lema}\label{le:nonlin_stab}
Fix~$\Gamma_1>0$, $\rho_1>0$ and $\Lambda\ge 1$, and let $(\tilde \bv_h^n,q_h^n)$  be the solution
of (\ref{eq:eu_non_tilde})-(\ref{eq:eu_non_tilde2}) with $\bg$ defined in (\ref{eq:ns_lag}) and initial condition $(\bv_h^0,q_h^0)=({\mbf s}_h^0,z_h^0)$.
Then, under  the assumptions of Lemma~\ref{le:cotas_threshold},
there exist positive constants~$h_0$ and $S$ (the stability constant given by~(\ref{eq:constat_S}) below) such that for any~$h\le h_0$, and any two sequences
 $(\bw_{1,h}^n)_{n=1}^N$ and $(\bw_{2,h}^n)_{n=0}^N$ in~$V_h$ satisfying the threshold condition
\begin{equation}
\label{eq:threshold}
\biggl(\Delta t\sum_{j=0}^N\nu \|\nabla(\bw_{i,h}^j - \tilde \bv_h^j)\|_0^2\biggr)^{1/2}\le {\Gamma_1} h^{1/2}, \quad i=1,2,\quad\Delta t=T/N.
\end{equation}
the following bound holds
\begin{align}\label{eq:stab_ns}
  \nrm (\bw_h^n)_{n=0}^N \nrm_{\delta,\Delta t}
&\le S\Bigl(\|\bw_h^{0}\|_0^2+ \Delta t\sum_{j=1}^{N}\frac{1}{\nu}\|{\bs\tau}_h^j\|_{-1}^2\Bigr)^{1/2},
\end{align}
where, for~$n=0,1,\ldots,N$, $\bw_h^n=\bw_{h,1}^n-\bw_{h,2}^n$, and ${\bs\tau}_h^n$ is defined by
\begin{align*}
({\bs\tau}_h^n,{\bs\chi}_h)=&
\Bigl(\frac{\bw_h^{n}-\bw_h^{n-1}}{\Delta t},{\bs\chi}_h\Bigr)+\nu (\nabla \bw_h^{n},\nabla {\bs\chi}_h)+(\nabla Z_h\bw_h^{n-1},{\bs\chi}_h)\nonumber\\
&{}+\bigl(B(\bw_{1,h}^{n-1},\bw_{1,h}^{n-1})-B(\bw_{2,h}^{n-1},\bw_{2,h}^{n-1}),{\bs\chi}_h\bigr),
{}\qquad\quad{\bs\chi}_h\in V_h.\nonumber\\
\end{align*}
\end{lema}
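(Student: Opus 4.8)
The plan is to recognise that the difference $\bw_h^n=\bw_{1,h}^n-\bw_{2,h}^n$ satisfies the hypotheses of Lemma~\ref{lema_stab_evol} with a forcing term that carries the difference of the convective terms, to apply the energy estimate~(\ref{eq:stab_evol_1}) there, to absorb the convective contribution by means of Lemma~\ref{le:nonlin} and the threshold bounds of Lemma~\ref{le:cotas_threshold}, and to close the argument with the discrete Gronwall Lemma~\ref{le:gronwall}. Concretely, set $y_h^n:=Z_h\bw_h^n$, so that $(\nabla\cdot\bw_h^{n+1},\psi_h)+\delta(\nabla y_h^{n+1},\nabla\psi_h)=0$ holds for all $\psi_h\in Q_h$ by the definition of $Z_h$, and observe that, by the very definition of ${\bs\tau}_h^n$, the pair $(\bw_h^n,y_h^n)$ satisfies, for all ${\bs\chi}_h\in V_h$,
\begin{equation*}
\Bigl(\frac{\bw_h^{n+1}-\bw_h^n}{\Delta t},{\bs\chi}_h\Bigr)+\nu(\nabla\bw_h^{n+1},\nabla{\bs\chi}_h)+(\nabla y_h^n,{\bs\chi}_h)=({\bs\tau}_h^{n+1},{\bs\chi}_h)-\bigl(B(\bw_{1,h}^n,\bw_{1,h}^n)-B(\bw_{2,h}^n,\bw_{2,h}^n),{\bs\chi}_h\bigr).
\end{equation*}
This is the setting of Lemma~\ref{lema_stab_evol} with $d_h^n=0$ and $\bb_h^n\in V_h$ the element that represents the right-hand side functional in the $L^2$ inner product (condition~(\ref{eq:cond_delta2}) being in force), so that~(\ref{eq:stab_evol_1}) with $n_0=0$ gives, for $1\le n\le N$,
\begin{equation*}
\|\bw_h^n\|_0^2+\Delta t\sum_{j=0}^{n-1}\bigl(\nu\|\nabla\bw_h^{j+1}\|_0^2+\delta\|\nabla y_h^{j+1}\|_0^2\bigr)\le c_0\|\bw_h^0\|_0^2+c_0\,\Delta t\sum_{j=0}^{n-1}\nu^{-1}\|\bb_h^j\|_{-1}^2,
\end{equation*}
whose left-hand side becomes $\nrm(\bw_h^n)_{n=0}^N\nrm_{\delta,\Delta t}^2$ once the maximum over $n$ is taken.

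The next step is to estimate $\|\bb_h^j\|_{-1}$. Splitting off the linear residual ${\bs\tau}_h^{j+1}$ and applying Lemma~\ref{le:nonlin} (estimate~(\ref{eq:nonlin2})), Sobolev's inequality~(\ref{eq:sob}) to bound $\|{\bs\phi}\|_{L^{2d}}$ by $\|{\bs\phi}\|_1$ (valid for $d\le 3$), and the $H^1$-stability of the $L^2$-projection onto $V_h$ on the quasi-uniform family of meshes, one obtains
\begin{equation*}
\nu^{-1}\|\bb_h^j\|_{-1}^2\le 2\nu^{-1}\|{\bs\tau}_h^{j+1}\|_{-1}^2+C\nu^{-1}A_j\|\bw_h^j\|_0^2,\qquad A_j:=\sum_{i=1,2}\bigl(\|\nabla\bw_{i,h}^j\|_{L^{2d/(d-1)}}^2+\|\bw_{i,h}^j\|_\infty^2\bigr).
\end{equation*}
Inserting this into the previous display leaves a Gronwall-type inequality for $a_n:=\|\bw_h^n\|_0^2$ with multiplier $C\nu^{-1}A_j$.

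The crucial point is now that $\Delta t\sum_{j=0}^N A_j$ is bounded independently of $h$. This is where the threshold hypothesis~(\ref{eq:threshold}) enters: it gives $\Delta t\sum_{j=0}^N\|\nabla(\bw_{i,h}^j-\tilde\bv_h^j)\|_0^2\le\Gamma_1^2 h/\nu$, so the factor $c_{\mathrm{r}}\Lambda/h$ multiplying this sum in Lemma~\ref{le:cotas_threshold} yields the $h$-independent quantity $c_{\mathrm{r}}\Lambda\Gamma_1^2/\nu$, whence $\Delta t\sum_{j=0}^N A_j\le\Gamma_2:=2\bigl(c_{\mathrm{r}}\Lambda\Gamma_1^2\nu^{-1}+C_{\mathrm{th}}\bigr)\bigl(1+|\Omega|^{(3-d)/(2d)}\bigr)$, where the smallness requirement $h\le h_0$ is imposed so that all the hypotheses of Lemma~\ref{le:cotas_threshold} (notably the compatibility of~(\ref{eq:cota_deltat_up}) with~(\ref{eq:cond_delta})) hold and $C_{\mathrm{th}}$ stays uniformly bounded. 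Finally I would apply Lemma~\ref{le:gronwall} with $k=\Delta t$, $a_n=\|\bw_h^n\|_0^2$, $\gamma_j=C\nu^{-1}A_j$, $c_j=2\nu^{-1}\|{\bs\tau}_h^j\|_{-1}^2$ and $B=c_0\|\bw_h^0\|_0^2$; since $\Delta t\sum_j\gamma_j\le C\nu^{-1}\Gamma_2$, taking the maximum over $n$ and square roots yields~(\ref{eq:stab_ns}) with
\begin{equation}\label{eq:constat_S}
S^2=C\exp\bigl(C\nu^{-1}\Gamma_2\bigr),
\end{equation}
which depends only on $\Gamma_1$, $\Lambda$, $\rho_1$, $\nu$, $T$ and the data through the constants in~(\ref{cota_th1_h1_bis}).

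The main obstacle is the convective term: the discrete norms $\|\nabla\bw_{i,h}^j\|_{L^{2d/(d-1)}}$ and $\|\bw_{i,h}^j\|_\infty$ are not controlled by $H^1$ norms uniformly in $h$, and a crude inverse estimate loses a factor $h^{-1/2}$, hence a factor $h^{-1}$ in the summed quantity $A_j$. The threshold condition~(\ref{eq:threshold}) is calibrated precisely so that, combined with Lemma~\ref{le:cotas_threshold}, the factor $h$ gained from $\Gamma_1^2 h$ cancels the $h^{-1}$ lost in the inverse estimate, leaving the Gronwall multiplier, and therefore $S$, bounded independently of $h$; keeping this bookkeeping tight, while relegating the $H^{-1}$-estimate of the $V_h$-representative of the convective difference to a routine step, is the heart of the argument.
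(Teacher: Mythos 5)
Your proposal follows essentially the same route as the paper's proof: apply the energy estimate~(\ref{eq:stab_evol_1}) with $d_h^n=0$ and the nonlinear difference folded into $\bb_h^n$, bound its negative norm via~(\ref{eq:nonlin2}) to obtain a Gronwall multiplier $\gamma_j$ built from $\|\bw_{i,h}^j\|_\infty$ and $\|\nabla\bw_{i,h}^j\|_{L^{2d/(d-1)}}$, control $\Delta t\sum_j\gamma_j$ uniformly in $h$ through Lemma~\ref{le:cotas_threshold} and the threshold condition~(\ref{eq:threshold}), and close with Lemma~\ref{le:gronwall} to get $S=\exp(L/\nu)$. The argument is correct and matches the paper's in all essential respects.
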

\begin{proof} Applying~(\ref{eq:stab_evol_1}) with $d_h^n=0$ and
$$
\bb_h^n={\bs\tau}_h^{n+1} - \bigl(B(\bw_{1,h}^{n},\bw_{1,h}^{n})-B(\bw_{2,h}^{n},\bw_{2,h}^{n})\bigr),
$$
for~$n=0,1,\ldots,N,$ we have that the left hand side of~(\ref{eq:stab_ns})
can be bounded by
\begin{equation}\label{eq:sum_bb}
\|\bw_h^{0}\|_0^2+ \Delta t\sum_{j=1}^{n}\frac{1}{\nu}\|{\bs\tau}_h^j\|_{-1}^2 + \Delta t\sum_{j=0}^{n-1}\frac{1}{\nu}
\bigl\|
B(\bw_{1,h}^{j},\bw_{1,h}^{j})-B(\bw_{2,h}^{j},\bw_{2,h}^{j})\bigr\|_{-1}^2.
\end{equation}
We will now show that for some  positive $\gamma_0,\ldots,\gamma_{N-1}$ and $L>0$ satisfying
$$
\Delta t\sum_{j=0}^{n-1}\gamma_j \le L,
$$
 the last sum in~(\ref{eq:sum_bb}) can be bounded as
\begin{equation}
\label{eq:cota_con_L}
\Delta t\sum_{j=0}^{n-1}\frac{1}{\nu}
\bigl\|
B(\bw_{1,h}^{j},\bw_{1,h}^{j})-B(\bw_{2,h}^{j},\bw_{2,h}^{j})\bigr\|_{-1}^2\le \Delta t \frac{1}{\nu}\sum_{j=0}^{n-1}
\gamma_j\|
\bw_{h}^{j}\|_{0}^2,
\end{equation}
so that applying~Lemma~\ref{le:gronwall} the proof will be finished.
We do this for the more difficult case $d=3$. For ${\bs\phi}\in H^1_0(\Omega)^3$, applying~(\ref{eq:nonlin2}) we have
\begin{align*}
\bigl(
B(\bw_{1,h}^{j},\bw_{1,h}^{j})-B(\bw_{2,h}^{j},&\bw_{2,h}^{j}),{\bs\phi}\bigr)
\le
\|\bw_h^j\|_{0}\Bigl(\bigl(\|\bw_{1,h}^j\|_\infty
+\|\bw_{2,h}^j\|_\infty\bigr)\left\|\nabla{\bs\phi}\right\|_0\
\nonumber\\
&{}+\bigl(\|\nabla\bw_{1,h}^j\|_{L^{2d/(d-1)}} +
\|\nabla \bw_{2,h}^j\|_{L^{2d/(d-1)}}\bigr) \left\|{\bs\phi}\right\|_{L^{2d}}
\Bigr).
\end{align*}
Applying~Sobolev's inequality we have that~$\left\|{\bs\phi}\right\|_{L^{2d}}
\le c_{1}\left\|{\bs\phi}\right\|_1$, so that, we can take
$$
\gamma_j = 2\bigl(\|\bw_{1,h}^j\|_\infty^2
+\|\bw_{2,h}^j\|_\infty^2\bigr) + c_1^2 \bigl(\|\nabla\bw_{1,h}^j\|_{L^{2d/(d-1)}} +
\|\nabla \bw_{2,h}^j\|_{L^{2d/(d-1)}}\bigr),
$$
which, in view
of~Lemma~\ref{le:cotas_threshold} and the threshold condition~(\ref{eq:threshold}) we see that~(\ref{eq:cota_con_L})
follows with
$$
L=4\bigl(c_{\mathrm{r}}\Gamma_1^2\nu^{-1}\Lambda+ C_{\mathrm{th}}\bigr)(1+c_{1}^2).
$$
Thus, we have that the statement of the Lemma holds with
\begin{equation}
\label{eq:constat_S}
S=\exp(L/\nu).
\end{equation}
\end{proof}

To proof the convergence of the numerical approximation $(\tilde\bu_h^n)_{n=0}^N$
we will apply the following result due to Stetter \cite[Lemma~1.2.2]{Stetter}.

\begin{lema}\label{le:Stetter}
Let $(X,\left\|\cdot\right\|_X)$ and $(Y,\left\|\cdot\right\|_Y)$ be two normed linear spaces with
the same finite dimension. Let
$F:X\rightarrow Y$ be a mapping continuous in $B_X(x_u,r_1)= \{ x\in X\mid \left\|x-x_u\right\|_X<r_1\}$,
for which  there exist $S>0$ and $r_2>0$ such that
\begin{equation}
\label{eq:Stetter_st}
\left\|x_1-x_2\right\|_X\le S\left\| F(x_1)-F(x_2)\right\|_Y,
\end{equation}
for every $x_1,x_2\in B_X(x_u,r_1)$ satisfying $\left\| F(x_j)-F(x_u)\right\|_Y \le r_2$, for $j=1,2$.

Then, for $r_0=\min(r_2,r_1/S)$ the mapping $F^{-1}$ exists and is Lipschitz-continuous in $B_Y(F(x_u),r_0)$ with
Lipschitz constant equal to~$S$.
\end{lema}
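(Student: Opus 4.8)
The plan is to prove the statement by a topological continuation argument whose decisive ingredient is Brouwer's invariance of domain; the hypothesis $\dim X=\dim Y<\infty$ is exactly what makes this tool available. First I would dispose of uniqueness and the Lipschitz bound, which are immediate from~(\ref{eq:Stetter_st}): if $y_1,y_2\in B_Y(F(x_u),r_0)$ and $x_1,x_2\in B_X(x_u,r_1)$ satisfy $F(x_i)=y_i$, then $\|F(x_i)-F(x_u)\|_Y=\|y_i-F(x_u)\|_Y<r_0\le r_2$, so~(\ref{eq:Stetter_st}) applies to the pair $(x_1,x_2)$ and yields $\|x_1-x_2\|_X\le S\|y_1-y_2\|_Y$; in particular a preimage of a given $y$ lying in $B_X(x_u,r_1)$ is unique when it exists. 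Everything then reduces to showing that each $y\in B_Y(F(x_u),r_0)$ has a preimage in $B_X(x_u,r_1)$.

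Next I would organize this as a continuation in a parameter. Fix $y\in B_Y(F(x_u),r_0)$; the case $y=F(x_u)$ is trivial, so assume $y\neq F(x_u)$ and set $\rho=S\|y-F(x_u)\|_Y$, so that $0<\rho<Sr_0\le r_1$ (using $r_0\le r_1/S$). Let $\overline B=\{x\in X:\|x-x_u\|_X\le\rho\}$, a compact set contained in the open ball $B_X(x_u,r_1)$, and define
\[
T=\bigl\{t\in[0,1]:\ \exists\,x\in\overline B\ \text{with}\ F(x)=F(x_u)+t\,(y-F(x_u))\bigr\}.
\]
Then $0\in T$ (take $x=x_u$), and $T$ is closed in $[0,1]$: given $t_k\in T$ with $t_k\to t$ and witnesses $x_k\in\overline B$, compactness of $\overline B$ and continuity of $F$ on $B_X(x_u,r_1)$ furnish a subsequential limit $x\in\overline B$ with $F(x)=F(x_u)+t(y-F(x_u))$. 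The a priori observation that I would use repeatedly is this: whenever $x\in\overline B$ and $t\in[0,1]$ satisfy $F(x)=F(x_u)+t(y-F(x_u))$, one has $x\in B_X(x_u,r_1)$ and $\|F(x)-F(x_u)\|_Y=t\|y-F(x_u)\|_Y<r_0\le r_2$, so~(\ref{eq:Stetter_st}) applies to $(x,x_u)$ and gives $\|x-x_u\|_X\le St\|y-F(x_u)\|_Y=t\rho\le\rho$; thus such an $x$ automatically lies in $\overline B$, and in its interior when $t<1$.

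Finally, the openness of $T$ is the main obstacle, and this is where invariance of domain enters. Given $t_0\in T$ with witness $x_0\in\overline B$, continuity of $F$ lets me pick $\varepsilon>0$ with $B_X(x_0,\varepsilon)\subset B_X(x_u,r_1)$ and $F(B_X(x_0,\varepsilon))\subset B_Y(F(x_u),r_2)$; then~(\ref{eq:Stetter_st}) makes $F$ injective on $B_X(x_0,\varepsilon)$, and since $\dim X=\dim Y<\infty$, invariance of domain shows that $F(B_X(x_0,\varepsilon))$ is open in $Y$ and contains $F(x_0)=F(x_u)+t_0(y-F(x_u))$. Hence for $t$ close to $t_0$ there is $x\in B_X(x_0,\varepsilon)$ with $F(x)=F(x_u)+t(y-F(x_u))$, and the a priori bound above forces $\|x-x_u\|_X\le\rho$, i.e.\ $x\in\overline B$, so $t\in T$. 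Thus $T$ is nonempty, open and closed in the connected interval $[0,1]$, whence $T=[0,1]$; taking $t=1$ produces $x\in\overline B\subset B_X(x_u,r_1)$ with $F(x)=y$. Together with the uniqueness and Lipschitz estimate of the first step this shows that $F^{-1}$ is well defined on $B_Y(F(x_u),r_0)$ and Lipschitz there with constant $S$. An equivalent route for the existence step is a Brouwer-degree computation: the homotopy $H(t,x)=F(x)-F(x_u)-t(y-F(x_u))$ has no zero on $\partial\overline B$ for $t<1$ by the a priori bound, $F-F(x_u)$ has $x_u$ as its only zero in $\overline B$ and is locally injective there so its degree on $\overline B$ at $0$ is $\pm1$, and homotopy invariance then forces $H(1,\cdot)=F-y$ to vanish somewhere in $\overline B$.
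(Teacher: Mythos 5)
The paper offers no proof of this lemma: it is quoted directly from Stetter \cite[Lemma~1.2.2]{Stetter} and used as a black box, so there is no in-paper argument to compare yours against. On its own terms your proof is correct and complete. The easy half is right: for preimages lying in $B_X(x_u,r_1)$ of points of $B_Y(F(x_u),r_0)$, the hypothesis $r_0\le r_2$ makes \eqref{eq:Stetter_st} applicable and yields both uniqueness and the Lipschitz constant $S$. The substantive half is existence, and your continuation argument handles it properly: the a priori estimate $\|x-x_u\|_X\le St\|y-F(x_u)\|_Y=t\rho\le\rho<r_1$ confines the homotopy to the compact ball $\overline B$ (note that the way you actually invoke this estimate in the openness step only requires $x\in B_X(x_u,r_1)$ with $\|F(x)-F(x_u)\|_Y\le r_2$, not $x\in\overline B$ as in your stated formulation — the derivation supports this, but it would be cleaner to state the observation under those weaker hypotheses); closedness of $T$ uses compactness of $\overline B$, available precisely because the dimension is finite; and openness of $T$ is where $\dim X=\dim Y<\infty$ is genuinely consumed, via invariance of domain applied to the locally injective $F$. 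This is essentially the classical proof, so nothing is lost relative to the cited source. One small caveat on your alternative degree-theoretic sketch: the homotopy $H(t,x)=F(x)-F(x_u)-t(y-F(x_u))$ is only guaranteed zero-free on $\partial\overline B$ for $t<1$, so to apply homotopy invariance one should either work on a slightly larger admissible ball or first observe that a zero of $H(1,\cdot)$ on $\partial\overline B$ already finishes the proof; the main connectedness argument does not suffer from this issue.
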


Before applying Lemma~\ref{le:Stetter} we need to prove a consistency result.

\begin{lema}\label{le:semi_cons} Let $(\bu,p)$ be the solution of (\ref{NS}) and let $(\tilde \bv_h^n,q_h^n)$  be the solution
of (\ref{eq:eu_non_tilde})-(\ref{eq:eu_non_tilde2}) with $\bg$ defined in (\ref{eq:ns_lag}) and $(\tilde \bv_h^0,q_h^0)=({\mbf s}_h^0,z_h^0)$. Then, there exists a positive constant~$C_B$, depending on the Sobolev's constant~$c_{1}$,
$c_\textrm{\rm A}$ in~(\ref{eq:agmon}), $C_{th}$ in~Lemma~\ref{le:cotas_threshold}, $M_1$, $M_2$ in (\ref{eq:M1-M2})
and the ratio~$\Lambda$ in~(\ref{eq:quasi}),  such that the truncation error
\begin{eqnarray*}
{\bs\tau}_h^n=
P_{V_h}\bigl(B(\tilde\bv_{h}^{n-1},\tilde\bv_{h}^{n-1})-B(\bu(t_{n}),\bu(t_{n}))\bigr),\qquad n=0,\ldots, N-1
\end{eqnarray*}
satisfies the following bounds
\begin{align*}
\Delta t \sum_{j=1}^n\|{\bs \tau}_h^j\|_{0}^2&\le C_B^2\biggl(
\Delta t\sum_{j=0}^{n-1}\|\nabla(\tilde\bv_h^{j}-\bu(t_{j}))\|_0^2 +\nu^2 \Delta t^2K_{3,2}^2
\biggr),
\nonumber\\
\Delta t \sum_{j=1}^n\|{\bs \tau}_h^j\|_{-1}^2&\le C_B^2\left(t_n\max_{0\le j\le n-1}\|\tilde\bv_h^{j}-\bu(t_{j})\|_0^2+
\nu^2 \Delta t^2 K_{2,2}^2
\right).
\nonumber
\end{align*}

\end{lema}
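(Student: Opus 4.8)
The plan is to write ${\bs\tau}_h^j=P_{V_h}(I_j+II_j)$ with the spatial/temporal splitting
$$
I_j=B(\tilde\bv_h^{j-1},\tilde\bv_h^{j-1})-B(\bu(t_{j-1}),\bu(t_{j-1})),\qquad
II_j=B(\bu(t_{j-1}),\bu(t_{j-1}))-B(\bu(t_{j}),\bu(t_{j})),
$$
and to estimate the two pieces separately. Since $P_{V_h}$ is the $L^2$-orthogonal projection onto $V_h$ one has $\|P_{V_h}w\|_0\le\|w\|_0$, and, the mesh being quasi-uniform, $P_{V_h}$ is $H^1$-stable, so $\|P_{V_h}w\|_{-1}\le C\|w\|_{-1}$ as well; hence it suffices to bound $\|I_j\|_0,\|II_j\|_0$ and $\|I_j\|_{-1},\|II_j\|_{-1}$.

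For the temporal piece I would apply Lemma~\ref{le:nonlin} with $\bv=\bu(t_{j-1})$, $\bw=\bu(t_j)$, so that $\bv-\bw=\bu(t_{j-1})-\bu(t_j)=-\int_{t_{j-1}}^{t_j}\bu_s\,ds$. All coefficient norms appearing in~(\ref{eq:nonlin1})--(\ref{eq:nonlin2}), namely $\|\nabla\bu(t)\|_{L^{2d/(d-1)}}$ and $\|\bu(t)\|_\infty$, are bounded by $C(M_1M_2)^{1/2}$ using~(\ref{eq:parti_ineq}), Agmon's inequality~(\ref{eq:agmon}) and~(\ref{eq:M1-M2}) (for $d=2$ the $L^\infty$ bound also uses $\max_{[0,T]}\|\bu\|_0$); using $\|\bv-\bw\|_{L^{2d}}\le C\|\nabla(\bv-\bw)\|_0$ (Sobolev and Poincar\'e) in~(\ref{eq:nonlin1}) and $\|\phi\|_{L^{2d}}\le C\|\phi\|_1$ in~(\ref{eq:nonlin2}) gives $\|II_j\|_0\le C(M_1M_2)^{1/2}\|\nabla\bu(t_{j-1})-\nabla\bu(t_j)\|_0$ and $\|II_j\|_{-1}\le C(M_1M_2)^{1/2}\|\bu(t_{j-1})-\bu(t_j)\|_0$. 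A H\"older estimate in time bounds these by $\Delta t^{1/2}$ times $\bigl(\int_{t_{j-1}}^{t_j}\|\bu_s\|_1^2\,ds\bigr)^{1/2}$ and $\bigl(\int_{t_{j-1}}^{t_j}\|\bu_s\|_0^2\,ds\bigr)^{1/2}$ respectively; squaring, multiplying by $\Delta t$, summing in $j$ and telescoping yields $CM_1M_2\Delta t^2\int_0^{t_n}\|\bu_s\|_1^2\,ds$ and $CM_1M_2\Delta t^2\int_0^{t_n}\|\bu_s\|_0^2\,ds$, which are $\le CM_1M_2\nu^2\Delta t^2K_{3,2}^2$ and $\le CM_1M_2\nu^2\Delta t^2K_{2,2}^2$ by the definitions~(\ref{eq:u-int}) (the time weight $(t/T)^{k-3}$ being $1$ for $k=3$ and $\ge1$ for $k=2$).

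For the spatial piece I would apply the same two bounds with $\bv=\tilde\bv_h^{j-1}$, $\bw=\bu(t_{j-1})$ and $\be^{j-1}:=\tilde\bv_h^{j-1}-\bu(t_{j-1})$. The factors built from $\bu(t_{j-1})$ are again $\le C(M_1M_2)^{1/2}$, and $\|\be^{j-1}\|_{L^{2d}}\le C\|\nabla\be^{j-1}\|_0$ (resp. $\|\phi\|_{L^{2d}}\le C\|\phi\|_1$), so the only dangerous factors are $\|\nabla\tilde\bv_h^{j-1}\|_{L^{2d/(d-1)}}$ and $\|\tilde\bv_h^{j-1}\|_\infty$, which are not bounded uniformly in $h$. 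These are handled through Lemma~\ref{le:cotas_threshold} applied with $\bw_h^j=\tilde\bv_h^j$ (so its $\|\nabla(\bw_h^j-\tilde\bv_h^j)\|_0$ term vanishes and $\Delta t\sum_j\|\nabla\tilde\bv_h^j\|_{L^{2d/(d-1)}}^2\le C_{\mathrm{th}}$, $\Delta t\sum_j\|\tilde\bv_h^j\|_\infty^2\le C_{\mathrm{th}}|\Omega|^{(3-d)/(2d)}$); equivalently, the inverse estimate~(\ref{inv}), quasi-uniformity~(\ref{eq:quasi}) and interpolation~(\ref{cota_inter}) applied to $\be^{j-1}=(\tilde\bv_h^{j-1}-I_h\bu(t_{j-1}))+(I_h\bu(t_{j-1})-\bu(t_{j-1}))$ — exactly the computation in the proof of Lemma~\ref{le:cotas_threshold} — give $\|\nabla\tilde\bv_h^{j-1}\|_{L^{2d/(d-1)}}+\|\tilde\bv_h^{j-1}\|_\infty\le C+C(\Lambda/h)^{1/2}\|\nabla\be^{j-1}\|_0$. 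Inserting this, squaring and summing, the $L^2$ part becomes
$$
\Delta t\sum_{j=1}^n\|I_j\|_0^2\le C\,\Delta t\sum_{j=0}^{n-1}\|\nabla\be^j\|_0^2
+\frac{C\Lambda}{h}\Bigl(\max_{0\le j\le n-1}\|\nabla\be^j\|_0^2\Bigr)\Delta t\sum_{j=0}^{n-1}\|\nabla\be^j\|_0^2,
$$
and the quartic remainder is absorbed because a pointwise version of the $H^1$ velocity-error bound of Theorem~\ref{Th1} — obtained by using~(\ref{eq:stab_evol_2}) in place of~(\ref{eq:stab_evol_1}) in the error equation~(\ref{eq:er3_orig}) and adding the projection estimate~(\ref{eq:error_steady_simplified_linear}) — gives $\max_j\|\nabla\be^j\|_0^2\le C(\Delta t+h^2+\nu\delta)$, so that $h^{-1}\max_j\|\nabla\be^j\|_0^2\le C$ by the standing restrictions $\Delta t\le\delta$ and $\nu\delta\le c_M\hbox{\rm diam}(\Omega)h$. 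The same argument with~(\ref{eq:nonlin2}) replacing~(\ref{eq:nonlin1}) gives $\Delta t\sum_{j=1}^n\|I_j\|_{-1}^2\le C(1+\Lambda)\Delta t\sum_{j=0}^{n-1}\|\be^j\|_0^2\le C(1+\Lambda)t_n\max_{0\le j\le n-1}\|\be^j\|_0^2$, the factor $t_n$ coming from $\Delta t\sum_{j=0}^{n-1}\|\be^j\|_0^2\le t_n\max_j\|\be^j\|_0^2$.

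Collecting spatial and temporal estimates via $\|{\bs\tau}_h^j\|^2\le 2(\|I_j\|^2+\|II_j\|^2)$ and gathering the constants into $C_B$ — which then depends only on $c_{1}$, $c_{\mathrm A}$, $M_1$, $M_2$, $\Lambda$ and, through the $H^1$-error input, on $C_{\mathrm{th}}$, $\nu^{-1}$, $T$ — yields the two claimed inequalities. The genuinely delicate point, and the part I expect to be hardest, is the spatial nonlinear term: the consistency error of the convective term has to be controlled by the already-known (small) velocity error with a constant independent of $h$, even though the discrete velocity carries no $h$-uniform $L^\infty$ or $W^{1,2d/(d-1)}$ bound. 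That this is possible rests on combining Lemma~\ref{le:cotas_threshold} with the mesh/time-step restrictions $\Delta t\le\delta$ and $\nu\delta=O(h)$, which make $h^{-1}\max_j\|\nabla(\tilde\bv_h^j-\bu(t_j))\|_0^2$ bounded; keeping straight which norms are controlled only on time-average versus pointwise is where the care is needed.
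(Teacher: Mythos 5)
Your decomposition and the paper's are the same: the paper also splits ${\bs\tau}_h^n={\bs\tau}_{1,h}^n+{\bs\tau}_{2,h}^n$ into a spatial part (discrete velocity versus $\bu(t_{n-1})$) and a temporal part ($\bu(t_{n-1})$ versus $\bu(t_n)$), bounds both via Lemma~\ref{le:nonlin}, controls the exact-solution factors by Agmon's inequality, \eqref{eq:parti_ineq} and \eqref{eq:M1-M2}, and treats the temporal increment by H\"older in time exactly as you do, arriving at the $K_{3,2}$ and $K_{2,2}$ terms. The one place where you genuinely diverge is the treatment of the factors $\|\nabla\tilde\bv_h^{j-1}\|_{L^{2d/(d-1)}}$ and $\|\tilde\bv_h^{j-1}\|_\infty$: the paper simply invokes Lemma~\ref{le:cotas_threshold} with $\bw_h^j=\tilde\bv_h^j$ (so the threshold term vanishes and only $C_{\mathrm{th}}$ survives) and folds these quantities into the constant $C_B^0=2(c_{\mathrm A}+c_1^{3/2})(M_1M_2)^{1/2}+C_{\mathrm{th}}(1+c_1)$, obtaining a bound for each $\|{\bs\tau}_h^n\|_0$ individually; you instead keep the pointwise inverse-estimate bound $C+C(\Lambda/h)^{1/2}\|\nabla\be^{j-1}\|_0$ and absorb the resulting quartic term using a uniform-in-time $H^1$ error estimate $\max_j\|\nabla\be^j\|_0^2\le C(\Delta t+h^2+\nu\delta)$ together with $\Delta t\le\delta$ and \eqref{eq:cota_deltat_up}. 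Your route is more scrupulous about the fact that Lemma~\ref{le:cotas_threshold} only controls time averages, but it buys this at the cost of an auxiliary pointwise $H^1$ bound that the paper never states; you would need to actually prove it (your sketch via \eqref{eq:stab_evol_2} applied to \eqref{eq:er3_orig}, with zero initial error since $(\tilde\bv_h^0,q_h^0)=({\mbf s}_h^0,z_h^0)$, and the already-established bounds \eqref{cota_pre_l2_45} and \eqref{eq:cons_a_222} for the right-hand side, does work), and the resulting constant then picks up the dependence on $\nu^{-1}$ and $T$ that you correctly flag. Either way the two claimed inequalities follow, so the proposal is sound; just be aware that the extra lemma is your responsibility, not something you can cite from Theorem~\ref{Th1}.
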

\begin{proof} We concentrate on the more difficult case $d=3$.
We write ${\bs \tau}_h^n={\bs \tau}_{1,h}^n+{\bs \tau}_{2,h}^n$, were
\begin{eqnarray*}
{\bs \tau}_{1,h}^n &=& P_{V_h}\bigl(B(\tilde\bv_{h}^{n-1},\tilde\bv_{h}^{n-1})-B(\bu(t_{n-1}),\bu(t_{n-1}))\bigr),\\
{\bs \tau}_{2,h}^n &=& P_{V_h}\bigl(B(\bu(t_{n-1}),\bu(t_{n-1}))-B(\bu(t_{n}),\bu(t_{n}))\bigr).
\end{eqnarray*}
For~${\bs \tau}_{1,h}^n$, applying
Lemma~\ref{le:nonlin} and denoting $\be = \tilde \bv_h^{n-1}-\bu(t_{n-1})$, we have that it can be bounded by
\begin{align*}
\| {\bs\tau}_{1,h}\|_0\le &\left\|\be\right\|_{L^{2d}}\bigl(\|\nabla\bu(t_{n-1})\|_{L^{2d/(d-1)}}+
\|\nabla\tilde \bv_h^{n-1}\|_{L^{2d/(d-1)}}\bigr)\\
&{} +
\left\|\nabla\be\right\|_0\bigl(\left\|\bu(t_{n-1})\right\|_\infty
+\left\|\tilde \bv_h^{n-1}\right\|_\infty\bigr).
\end{align*}
Arguing similarly with~${\bs \tau}_{2,h}^n$, and denoting $\hat\be = \tilde \bu(t_{n-1})-\bu(t_{n})$, it can be bounded by
\begin{align*}
\| {\bs\tau}_{2,h}\|_0 \le &
\|\hat \be\|_{L^{2d}}\bigl(\|\nabla\bu(t_n)\|_{L^{2d/(d-1)}} +
\|\nabla\bu(t_{n-1})\|_{L^{2d/(d-1)}}\bigr)\\
&{}+
\left\|\nabla\hat\be\right\|_0\bigl(\left\|\bu(t_n)\right\|_\infty
+\left\|\bu(t_{n-1})\right\|_\infty\bigr).
\end{align*}
Applying Agmon's inequality (\ref{eq:agmon}), (\ref{eq:parti_ineq}) and~Lemma~\ref{le:cotas_threshold},
and noticing that due to H\"older's inequality we can write
$$
\left\|\nabla\hat\be\right\|_0=\biggl\|\int_{t_{n-1}}^{t_{n}} \nabla\bu_t(t)\,dt\biggr\|_0 \le \Delta t^{1/2}
\biggl(\int_{t_{n-1}}^{t_n} \|\nabla \bu_t\|_0^2\,dt\biggr)^{1/2},
$$
then it follows that
$$
\|{\bs \tau}_h^n\|_{0}\le C_B^0\left(\|\nabla(\tilde\bv_h^{n-1}-\bu(t_{n-1}))\|_0
+ \Delta t^{1/2} \biggl(\int_{t_{n-1}}^{t_n} \|\nabla \bu_t\|_0^2\,dt\biggr)^{1/2}\right),
$$
where
\begin{equation}
\label{eq:constantCB}
C_B^0 = 2(c_{\textrm{A}}+c_{1}^{3/2})(M_1M_2)^{1/2}
+C_{th}(1+c_{1}).
\end{equation}
Recalling now the definition of $K_{3,2}$ in~(\ref{eq:u-int}),
the bound for the $L^2$ norm follows with constant~$C_B=C_B^0\sqrt{2}$.

To prove the estimate in the negative norm, we recall that due to Sobolev's inequality we have that for ${\bs \phi}\in H^1(\Omega)^3$, we have that
$\left\|P_{V_h}{\bs\phi}\right\|_{L^{6}}\le c_{1}\left\|P_{V_h}{\bs\phi}\right\|_1$. Taking into account
that $\|P_{V_h}{\bs\phi}\|_1\le C\|{\bs\phi}\|_1$   from~(\ref{eq:nonlin2}) it follows that
\begin{align*}
\|{\bs \tau}_{1,h}^n\|_{-1}
\le &C\left\|\be\right\|_0\bigl(c_{1}\left\|\nabla\bu(t_{n-1})\right\|_{L^{2d/(d-1)}}
+\left\|\bu(t_{n-1})\right\|_{\infty}\bigr)\\
&{} +\left\|\be\right\|_0
\bigl(\|\tilde\bv_h^{n-1}\|_\infty+c_{1}\|\nabla\tilde\bv_h^{n-1}\|_{L^{2d/(d-1)}}
\bigr),
\end{align*}
and a similar result for~${\bs \tau}_{2,h}^n$ with~$\tilde\bv_h^{n-1}$ replaced by $\bu (t_n)$,
from where the result for the negative norm follows easily with a constant $C_B^1$ proportional  to $C_B^0$ in~(\ref{eq:constantCB}).
The proof of the lemma finishes taking $C_B=\max(C_B^0,C_B^1)$.
\end{proof}

\begin{theorem}\label{th:ns_conv} Under  the assumptions of Lemma~\ref{le:cotas_threshold}, assuming also~(\ref{eq:cota_deltat_up}),  then
 the solution $(\tilde \bu_h^n,p_h^n)$
of~(\ref{eq:ns_non_tilde}) with initial condition $(\tilde \bu_h^0,p_h^0)=({\mbf s}_h^0,z_h^0)$ satisfies the following bounds
for $n=1,\ldots,N$ and for $h$ small enough:
\begin{eqnarray}\label{eq:ns_conv0}
t_n\|\tilde \bu_h^n-\bu(t_n)\|_0^2&\le& \hat C_1 t_n\Delta t^2 + \hat C_2t_n(h^4+(\nu\delta)^2))
\end{eqnarray}
where
\begin{align}\label{laC1hat}
\hat C_1 &= \bigl( 1+(SC_B)^2T\nu^{-1}\bigr) C_1 + \nu (SC_B)^2K_{2,2}^2\\
\hat C_2 & = \bigl( 1+(SC_B)^2T\nu^{-1}\bigr) C_2,
\label{laC2hat}\
\end{align}
$C_1$ and~$C_2$ being the constants in~(\ref{laC1}) and~(\ref{laC2}).
Also, assuming for simplicity that (\ref{delta<T}) holds,
\begin{equation}\label{eq:ns_conv1}
\begin{split}
\Delta t\sum_{j=1}^{n}\bigl(\nu\|\nabla (\tilde \bu_h^j&-\bu(t_j))\|_0^2+\delta \|\nabla (q_h^j-q(t_j))\|_0^2\bigr)
\\
&\le \tilde C_1\Delta t + \tilde C_2(h^2+\nu\delta) +  \hat C_1
\Delta t^2 + \hat C_2(h^4+(\nu\delta)^2),
\end{split}
\end{equation}
where $\tilde C_1$ and $\tilde C_2$ are the constants in (\ref{laCtilde1}) and (\ref{laCtilde2}).
\end{theorem}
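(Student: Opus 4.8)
The plan is to obtain the bounds for $(\tilde\bu_h^n,p_h^n)$ by a perturbation argument about the auxiliary sequence $(\tilde\bv_h^n,q_h^n)$ solving~(\ref{eq:eu_non_tilde})-(\ref{eq:eu_non_tilde2}) with $\bg$ as in~(\ref{eq:ns_lag}) and $(\tilde\bv_h^0,q_h^0)=({\mbf s}_h^0,z_h^0)$ --- for which Theorem~\ref{Th3} already furnishes optimal error bounds against $(\bu,p)$ --- the comparison being made precise through Stetter's Lemma~\ref{le:Stetter}. First I would note that the discrete incompressibility equations in~(\ref{eq:ns_non_tilde}) and~(\ref{eq:eu_non_tilde2}), together with the common initial choice $p_h^0=q_h^0=z_h^0=Z_h{\mbf s}_h^0$, force $p_h^n=Z_h\tilde\bu_h^n$ and $q_h^n=Z_h\tilde\bv_h^n$ for all $n$, so the pressures can be eliminated and~(\ref{eq:ns_non_tilde}) recast as a map $F\colon X\to Y$, where $X$ is the space of sequences $(\bw_h^n)_{n=1}^N$ in $V_h$ (fed into~(\ref{eq:ns_non_tilde}) from the fixed initial value ${\mbf s}_h^0$) and $Y=(V_h')^N$ collects the $N$ momentum residuals (identified with elements of $V_h$ by Riesz), both of dimension $N\dim V_h$. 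With this identification $F\bigl((\tilde\bu_h^n)_n\bigr)=0$, while inserting the auxiliary sequence into~(\ref{eq:ns_non_tilde}) and subtracting~(\ref{eq:eu_non_tilde}) gives $F\bigl((\tilde\bv_h^n)_n\bigr)=({\bs\tau}_h^n)_{n=1}^N$ with ${\bs\tau}_h^n=P_{V_h}\bigl(B(\tilde\bv_h^{n-1},\tilde\bv_h^{n-1})-B(\bu(t_n),\bu(t_n))\bigr)$, exactly the consistency error of Lemma~\ref{le:semi_cons}.

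Next I would apply Lemma~\ref{le:Stetter} about $x_u=(\tilde\bv_h^n)_{n=1}^N$, equipping $X$ with the norm $\bigl(\nu\Delta t\sum_{j=1}^N\|\nabla(\cdot)^j\|_0^2\bigr)^{1/2}$ and $Y$ with $\bigl(\nu^{-1}\Delta t\sum_{j=1}^N\|\cdot\|_{-1}^2\bigr)^{1/2}$, and taking radius $r_1=\Gamma_1 h^{1/2}$, so that $x\in B_X(x_u,r_1)$ is equivalent to the threshold condition~(\ref{eq:threshold}) (the $j=0$ discrepancy vanishing since all these sequences share the initial value ${\mbf s}_h^0$). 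For any two such sequences $\bw_{1,h},\bw_{2,h}$, subtracting the equations they satisfy identifies the functional denoted ${\bs\tau}_h^n$ in Lemma~\ref{le:nonlin_stab} with the $n$-th component of $F(\bw_{1,h})-F(\bw_{2,h})$ (up to an irrelevant sign), so that~(\ref{eq:stab_ns}) --- whose left side dominates the $X$-distance of the two sequences --- supplies exactly the Lipschitz estimate~(\ref{eq:Stetter_st}) with the constant $S$ of~(\ref{eq:constat_S}), which is independent of $h$ and $\Delta t$ (the auxiliary bound $\|F(x_j)-F(x_u)\|_Y\le r_2$ of Lemma~\ref{le:Stetter} being irrelevant for invoking Lemma~\ref{le:nonlin_stab}, so $r_2$ may be taken as large as desired). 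Since $F$ is continuous, Lemma~\ref{le:Stetter} then yields that $F^{-1}$ exists and is Lipschitz with constant $S$ on $B_Y(F(x_u),r_0)$, $r_0=\min(r_2,r_1/S)\sim h^{1/2}$.

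The step I expect to be the main obstacle is verifying that $0=F\bigl((\tilde\bu_h^n)_n\bigr)$ lies in $B_Y(F(x_u),r_0)$; this is what makes the smallness hypotheses bite, and it requires the consistency error to be $o(h^{1/2})$. I would estimate, by the negative-norm bound of Lemma~\ref{le:semi_cons} combined with~(\ref{cota_th1_l2_bis}) of Theorem~\ref{Th3},
\begin{equation*}
\nu^{-1}\Delta t\sum_{j=1}^N\|{\bs\tau}_h^j\|_{-1}^2\le \nu^{-1}C_B^2\Bigl(T\bigl(C_1\Delta t^2+C_2(h^4+(\nu\delta)^2)\bigr)+\nu^2\Delta t^2K_{2,2}^2\Bigr),
\end{equation*}
and then observe that hypothesis~(\ref{eq:cota_deltat_up}) forces $\delta=O(h/\nu)$, hence $\Delta t\le\delta=O(h/\nu)$, so that the right-hand side is $O(h^2)$ and $\|F(x_u)\|_Y=O(h)=o(h^{1/2})<r_0$ for $h$ small enough. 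Therefore $(\tilde\bu_h^n)_n=F^{-1}(0)\in B_X(x_u,r_1)$, so in particular $\tilde\bu_h$ itself satisfies the threshold condition~(\ref{eq:threshold}), and Lemma~\ref{le:nonlin_stab} may now be invoked legitimately with $\bw_{1,h}^n=\tilde\bu_h^n$, $\bw_{2,h}^n=\tilde\bv_h^n$ (the latter satisfying~(\ref{eq:threshold}) trivially) and $\tilde\bu_h^0=\tilde\bv_h^0$, giving
\begin{equation*}
\nrm(\tilde\bu_h^n-\tilde\bv_h^n)_{n=0}^N\nrm_{\delta,\Delta t}^2\le S^2\nu^{-1}\Delta t\sum_{j=1}^N\|{\bs\tau}_h^j\|_{-1}^2\le (SC_B)^2\Bigl(\nu^{-1}T\bigl(C_1\Delta t^2+C_2(h^4+(\nu\delta)^2)\bigr)+\nu\Delta t^2K_{2,2}^2\Bigr).
\end{equation*}

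Finally I would close the argument with the triangle inequality. Writing $\tilde\bu_h^n-\bu(t_n)=(\tilde\bu_h^n-\tilde\bv_h^n)+(\tilde\bv_h^n-\bu(t_n))$, bounding $\|\tilde\bu_h^n-\tilde\bv_h^n\|_0^2$ by $\nrm(\tilde\bu_h^n-\tilde\bv_h^n)_{n=0}^N\nrm_{\delta,\Delta t}^2$ via the last display and $\|\tilde\bv_h^n-\bu(t_n)\|_0^2$ by~(\ref{cota_th1_l2_bis}), then multiplying by $t_n\le T$ and regrouping, yields~(\ref{eq:ns_conv0}) with $\hat C_1$, $\hat C_2$ as in~(\ref{laC1hat})-(\ref{laC2hat}) (recalling~(\ref{laC1})-(\ref{laC2})). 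For~(\ref{eq:ns_conv1}) I would use that $\nrm(\tilde\bu_h^n-\tilde\bv_h^n)_{n=0}^N\nrm_{\delta,\Delta t}^2$ also controls $\Delta t\sum_{j=1}^n\bigl(\nu\|\nabla(\tilde\bu_h^j-\tilde\bv_h^j)\|_0^2+\delta\|\nabla Z_h(\tilde\bu_h^j-\tilde\bv_h^j)\|_0^2\bigr)$ together with $p_h^j-q_h^j=Z_h(\tilde\bu_h^j-\tilde\bv_h^j)$, add and subtract $\tilde\bv_h^j$ and $q_h^j$, and invoke the $H^1$/pressure bound~(\ref{cota_th1_h1_bis}) of Theorem~\ref{Th3}, the two contributions combining into the right-hand side of~(\ref{eq:ns_conv1}). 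The crux is the third paragraph: for Stetter's lemma to be effective the ball on which the nonlinear stability of Lemma~\ref{le:nonlin_stab} holds --- whose radius is the $O(h^{1/2})$ threshold --- must be wide enough to capture the genuine discrete solution $\tilde\bu_h$, and this works only because the consistency error is $O(h)=o(h^{1/2})$, which is precisely where the regularity bookkeeping of Lemma~\ref{le:semi_cons} and the restrictions $\Delta t\le\delta$ and~(\ref{eq:cota_deltat_up}) enter.
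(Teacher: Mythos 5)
Your proposal is correct and follows essentially the same route as the paper's own proof: comparison with the auxiliary sequence $(\tilde\bv_h^n,q_h^n)$ of Theorem~\ref{Th3}, Stetter's Lemma~\ref{le:Stetter} with the stability supplied by Lemma~\ref{le:nonlin_stab} on the $O(h^{1/2})$ threshold ball and consistency by Lemma~\ref{le:semi_cons}, the observation that (\ref{eq:cota_deltat_up}) and $\Delta t\le\delta$ make the consistency error $O(h)=o(h^{1/2})$ so that $0=F((\tilde\bu_h^n)_n)$ lies in the ball where $F^{-1}$ is Lipschitz, and a final triangle inequality. The only deviations (taking the $X$-norm to be the gradient part of $\nrm\cdot\nrm_{\delta,\Delta t}$, omitting the $n=0$ component, and enlarging $r_2$) are cosmetic and do not affect the argument.
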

\begin{proof}
We apply Lemma~\ref{le:Stetter} with $X=Y=V_h^{N+1}$, $\left\| \cdot\right\|_{X}=\nrm\cdot\nrm_{\delta,\Delta t}$,
$$
\| ({\bs \tau}_h^n)_{n=0}^N\|_{Y} =
\Bigl(\|{\bs \tau}_h^{0}\|_0^2+ \Delta t\sum_{j=1}^{N}\frac{1}{\nu}\|{\bs \tau}_h^j\|_{-1}^2\Bigr)^{1/2}.
$$
and~$x_u=(\tilde\bv_h^n)_{n=0}^N$ where $(\tilde \bv_h^n,q_h^n)$  is the solution
of (\ref{eq:eu_non_tilde})-(\ref{eq:eu_non_tilde2}) with $\bg$ defined in (\ref{eq:ns_lag})
and initial condition $(\tilde \bv_h^0,q_h^0)=({\mbf s}_h^0,z_h^0)$.
We take $r_2=1$ and $r_1=\Gamma_1h^{1/2}$, and $F$ defined by
$F((\bw_h^n)_{n=0}^N)=(\bF_h^n)_{n=0}^N$ where
\begin{equation}
\label{eq:F_0}
\bF_h^0= \bw_h^0-\tilde \bv_h^0,
\end{equation}
and
for $n=1,\ldots,N$,
$\bF_h^n$ is the element in~$V_h$ satisfying
\begin{align}
\label{eq:F_n}
(\bF_h^n,{\bs\chi}_h)=&
\Bigl(\frac{\bw_h^{n}-\bw_h^{n-1}}{\Delta t},{\bs\chi}_h\Bigr)+\nu (\nabla \bw_h^{n},\nabla {\bs\chi}_h)+(\nabla Z_h\bw_h^{n-1},{\bs\chi}_h)\nonumber\\
&{}+\bigl(B(\bw_{h}^{n-1},\bw_{h}^{n-1}) - \bff(t_{n})
,{\bs\chi}_h\bigr),
{}\qquad\quad{\bs\chi}_h\in V_h.
\end{align}

We notice that the truncation error~$({\bs\tau}_h^n)_{n=0}^N=
F((\tilde\bv_h^n)_{n=0}^N)$ is
\begin{align*}
{\bs\tau}_h^0&=0,\\
{\bs\tau}_h^n&=
P_{V_h}\bigl(B(\tilde\bv_{h}^{n-1},\tilde\bv_{h}^{n-1})-B(\bu(t_{n}),\bu(t_{n}))\bigr),\qquad n=1,\ldots, N.
\end{align*}
The assumption (\ref{eq:Stetter_st}) holds due to Lemma~\ref{le:nonlin_stab}.

For the mapping~$F$ defined in~(\ref{eq:F_0})-(\ref{eq:F_n}) from~Lemma~\ref{le:semi_cons}
and~(\ref{cota_th1_l2_bis}) it follows
that
$$
\| F((\tilde \bv_h^n)_{n=0}^N)\|_Y\le C_B \Bigl((T\nu^{-1})\bigl(C_1\Delta t^2 + C_2(h^4+(\nu\delta)^2)\bigr) + \nu\Delta t^2K_{2,2}^2\Bigr)^{1/2}.
$$
Then, in view of condition~(\ref{eq:cota_deltat_up}), we have
$$
\| F((\tilde \bv_h^n)_{n=0}^N)\|_Y\le C_B \Bigl((T\nu^{-1})\bigl(C_1+ C_2\nu^2)+\nu K_{2,2}^2\Bigr)^{1/2}\hbox{\rm diam}(\Omega)c_M\nu^{-1}h
+O(h^{2}).
$$
and, thus, decays faster with $h$ than $r_1=\Gamma_1h^{1/2}$. Consequently,
for $h$ sufficiently small,
the null element in~$V_h^{N+1}$ belongs to the ball centered
at~$F((\tilde \bv_h^n)_{n=0}^N)$ and with radius $\min(r_2, r_1/S)$. Observe that the null element is
the image by~$F$ of the numerical approximation with initial condition~$(\tilde \bu_h^0,p_h^0)=({\mbf s}_h^0,z_h^0)$, that is $0=F((\tilde \bu_h^n)_{n=0}^N)$. Since, according to Lemma~\ref{le:Stetter}, the
mapping~$F$ has inverse in this ball,
then the differences $\tilde {\bs\epsilon}_h^n=\tilde \bu_h^n-\tilde \bv_h^n$, $n=0,\ldots,N$  satisfy the bound
\begin{align}\label{eq:ns-conv2}
 \nrm(\tilde{\bs\epsilon}_h^n)_{n=0}^N\nrm_{\delta, \Delta t}
&\le S\| F((\tilde \bv_h^n)_{n=0}^N)-0\|_Y\nonumber\\
&\le SC_B \Bigl((T\nu^{-1})\bigl(C_1\Delta t^2 + C_2(h^4+(\nu\delta)^2)\bigr) + \nu\Delta t^2\
K_{2,2}^2\Bigr)^{1/2}
\end{align}
Let us denote by
$\tilde {\bs \epsilon}^n=\tilde \bu_h^n - \bu(t_n)$ and $\varrho^n = p_h^n -p(t_n)$,
$n=0,\ldots,N$.
The proof is finished
by writing $\tilde {\bs\epsilon}^{n}=\tilde {\bs\epsilon}_h^{n}+(\tilde\bv_h^n-\bu(t_n))$, and~$\varrho_h^n=\varrho^n + (q_n^n-p(t_n))$, $n=0,1,\ldots,N$
and applying the bounds (\ref{cota_th1_l2_bis}) and (\ref{cota_th1_h1_bis}).
\end{proof}


\begin{remark} \label{re:full-implicit} Although we have analyzed  a semi implicit method, the analysis, with some minor changes that we now comment, applies also to the fully implicit backward Euler method. First, using Lemma~5.1 in~\cite{heyran4} instead
of Lemma~\ref{le:gronwall}, Lemma~\ref{le:nonlin_stab} can be easily extended to the fully implicit method. Also, in the case
of the fully implicit method, the truncation error~${\bs \tau}_{
h}^n$ in Lemma~\ref{le:semi_cons} would reduce
to~${\bs \tau}_{1,h}^{n+1}$, so that the constant~$C_B$ can be taken smaller. However, in the case of the fully implicit method, existence
of the numerical solution has to be proved, but this, as the arguments leading to~(\ref{eq:ns-conv2}) above show,
would be a consequence of  the null element belonging to the ball in~$V_h^{N+1}$ centered at~$F((\tilde \bv_h^n)_{n=0}^N)$ and with radius $\min(r_2, r_1/S)$, where the inverse of~$F$ exists. Taking into account these three details, the reader will find
no difficulty in extending the results of this paper to the fully implicit method.
\end{remark}

\begin{remark} Let us observe that for $(\tilde \bu_h^n,p_h^n)$ we  take as initial condition $(\tilde \bu_h^0,p_h^0)=(\tilde \bv_h^0,q_h^0)$. Although for simplicity
we have assumed in Theorem~\ref{Th3} that $\tilde \bv_h^0={\mbf s}_h^0$, $q_h^0=z_h^0$, with $({\mbf s}_h^n,q_h^n)$ the stabilized Stokes approximation defined in (\ref{eq:pro_stokes})-(\ref{eq:pro_stokes2})
with $\bg$ defined in (\ref{eq:lag}) other initial approximations can be chosen. For example, we can choose
$\tilde \bv_h^0=I_h \bu_0$,
 the interpolant of the initial velocity,
and define $q_h^0$ as the pressure obtained solving (\ref{eq:eu_non_tilde2}) for $n=-1$. With this choice the initial pressure error $r_h^0=q_h^0-z_h^0$ satisfies
$$
(\nabla \cdot (I_h \bu_0-{\mbf s}_h^0),\psi_h)=\delta(\nabla r_h^0,\nabla \psi_h),\quad \forall \psi_h\in Q_h,
$$
from which
$\|\nabla r_h^0\|_0\le \delta^{-1}\|I_h\bu_0-{\mbf s}_h^0\|_0$. Now, since both $\delta$ and $\|I_h\bu_0-{\mbf s}_h^0\|_0$ are $O(h^2)$ we get $\|\nabla r_h^0\|_0$
is bounded by a constant. In view of (\ref{cota_th1_l2})-(\ref{cota_th1_H1}) this choice keeps the optimal rate of convergence for the approximation $(\tilde \bv_h^n,q_h^n)$ since
the term ${\Delta t}^2\|\nabla r_h^0\|_0^2$ is $O({\Delta t}^2)$.
\end{remark}
To conclude we obtain a bound for the pressure. We need a previous result that we now state
\begin{lema}\label{le:semi_cons_bis} Let $(\tilde \bu_h^n,p_h^n)$  be the solution
of ~(\ref{eq:ns_non_tilde}) with $(\tilde \bu_h^0,p_h^0)=({\mbf s}_h^0,z_h^0)$ and assume $\delta$ satisfies condition (\ref{eq:cota_deltat_up}). Then,
there exists a positive constant~$C_B^*$, depending on the Sobolev's constant~$c_{1}$,
$c_\textrm{\rm A}$ in~(\ref{eq:agmon}), $C_{th}$ and $c_{\rm r}$ in~Lemma~\ref{le:cotas_threshold}, $M_1$, $M_2$ in (\ref{eq:M1-M2}), the ratio~$\Lambda$
in~(\ref{eq:quasi}) and the constants $\hat C_1^n$ and~$\hat C_2$ in (\ref{laC1hat}--\ref{laC2hat}), such that
that the error
\begin{eqnarray}\label{tau*}
({\bs\tau}^*)_h^n=
P_{V_h}\bigl(B(\tilde\bu_{h}^{n-1},\tilde\bu_{h}^{n-1})-B(\bu(t_{n}),\bu(t_{n}))\bigr),\qquad n=1,\ldots, N
\end{eqnarray}
satisfies the following bounds
\begin{align*}
\Delta t \sum_{j=1}^n\|({\bs\tau}^*)_h^j\|_{0}^2&\le (C_B^*)^2\biggl(
\Delta t\sum_{j=0}^{n-1}\|\nabla(\tilde\bu_h^{j}-\bu(t_{j}))\|_0^2 +\nu^2 \Delta t^2K_{3,2}^2
\biggr),
\nonumber\\
\Delta t \sum_{j=1}^n\|({\bs\tau}^*)_h^j\|_{-1}^2&\le (C_B^*)^2\left(t_n\max_{0\le j\le n-1}\|\tilde\bu_h^{j}-\bu(t_{j})\|_0^2+
\nu^2 \Delta t^2 K_{2,2}^2
\right).
\nonumber
\end{align*}
%
\end{lema}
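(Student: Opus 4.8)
The statement is the exact counterpart of Lemma~\ref{le:semi_cons}, the only difference being that the auxiliary sequence $(\tilde\bv_h^n)$ is replaced everywhere by the Navier--Stokes iterates $(\tilde\bu_h^n)$ produced by~(\ref{eq:ns_non_tilde}). Accordingly, the plan is to follow the proof of Lemma~\ref{le:semi_cons} step by step; the only genuinely new ingredient is that the sequence $(\tilde\bu_h^n)$ obeys threshold-type bounds analogous to those of Lemma~\ref{le:cotas_threshold}. Concentrating on the harder case $d=3$, I would split $({\bs\tau}^*)_h^n=({\bs\tau}^*)_{1,h}^n+({\bs\tau}^*)_{2,h}^n$ with
$$
({\bs\tau}^*)_{1,h}^n=P_{V_h}\bigl(B(\tilde\bu_h^{n-1},\tilde\bu_h^{n-1})-B(\bu(t_{n-1}),\bu(t_{n-1}))\bigr),\qquad
({\bs\tau}^*)_{2,h}^n=P_{V_h}\bigl(B(\bu(t_{n-1}),\bu(t_{n-1}))-B(\bu(t_{n}),\bu(t_{n}))\bigr).
$$
The term $({\bs\tau}^*)_{2,h}^n$ involves only the exact solution and is bounded exactly as in Lemma~\ref{le:semi_cons}: by Lemma~\ref{le:nonlin}, Agmon's inequality~(\ref{eq:agmon}), (\ref{eq:parti_ineq}), the bounds~(\ref{eq:M1-M2}), and the identity $\nabla\bigl(\bu(t_{n})-\bu(t_{n-1})\bigr)=\int_{t_{n-1}}^{t_n}\nabla\bu_t\,dt$ together with H\"older's inequality in time; squaring, summing and invoking~(\ref{eq:u-int}) produces the $\nu^2\Delta t^2K_{3,2}^2$ (respectively $\nu^2\Delta t^2K_{2,2}^2$) contributions.

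For $({\bs\tau}^*)_{1,h}^n$, Lemma~\ref{le:nonlin} bounds $\|({\bs\tau}^*)_{1,h}^n\|_0$ by
$$
\|\be^{n-1}\|_{L^{6}}\bigl(\|\nabla\bu(t_{n-1})\|_{L^{3}}+\|\nabla\tilde\bu_h^{n-1}\|_{L^{3}}\bigr)
+\|\nabla\be^{n-1}\|_0\bigl(\|\bu(t_{n-1})\|_\infty+\|\tilde\bu_h^{n-1}\|_\infty\bigr),
\qquad \be^{n-1}=\tilde\bu_h^{n-1}-\bu(t_{n-1}),
$$
and analogously in the negative norm via~(\ref{eq:nonlin2}), with $\|\be^{n-1}\|_0$ in place of $\|\nabla\be^{n-1}\|_0$. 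To control $\Delta t\sum_j\|\nabla\tilde\bu_h^j\|_{L^{3}}^2$ and $\Delta t\sum_j\|\tilde\bu_h^j\|_\infty^2$ after summation in time, I would apply Lemma~\ref{le:cotas_threshold}, which holds for \emph{any} sequence in $V_h$, to the choice $\bw_h^j=\tilde\bu_h^j$; this reduces both sums to $\frac{c_{\mathrm r}\Lambda}{h}\,\Delta t\sum_j\|\nabla(\tilde\bu_h^j-\tilde\bv_h^j)\|_0^2+C_{\mathrm{th}}$ (the extra factor $|\Omega|^{(3-d)/(2d)}$ in the $L^\infty$ bound equals $1$ for $d=3$). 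Since $\tilde\bu_h^j-\tilde\bv_h^j=\tilde{\bs\epsilon}_h^j$ and $\tilde{\bs\epsilon}_h^0=0$, estimate~(\ref{eq:ns-conv2}) from the proof of Theorem~\ref{th:ns_conv} gives
$$
\frac1h\,\Delta t\sum_j\|\nabla(\tilde\bu_h^j-\tilde\bv_h^j)\|_0^2\le\frac{(SC_B)^2}{\nu h}\Bigl(T\nu^{-1}\bigl(C_1\Delta t^2+C_2(h^4+(\nu\delta)^2)\bigr)+\nu\Delta t^2K_{2,2}^2\Bigr).
$$

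The remaining point is that the right-hand side above stays bounded for $h\le h_0$, and this is where the standing hypotheses enter: from $\Delta t\le\delta$ (condition~(\ref{eq:cond_delta2})) and $\nu\delta\le c_M\hbox{\rm diam}(\Omega)h$ (condition~(\ref{eq:cota_deltat_up})) one gets $\Delta t^2/h\le\delta^2/h\le(c_M\hbox{\rm diam}(\Omega))^2h/\nu^2$, while~(\ref{eq:cota_deltat_up}) also gives $(\nu\delta)^2/h\le c_M^2\hbox{\rm diam}(\Omega)^2h$, and trivially $h^4/h=h^3$; hence the whole expression is $O(h)$ and in particular bounded (by $1$, say) for $h$ small. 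Consequently, for $h\le h_0$ the sums $\Delta t\sum_j\|\nabla\tilde\bu_h^j\|_{L^{3}}^2$ and $\Delta t\sum_j\|\tilde\bu_h^j\|_\infty^2$ are bounded by a constant depending only on $C_{\mathrm{th}}$, $c_{\mathrm r}$, $\Lambda$, $S$, $C_B$, $\hat C_1$, $\hat C_2$ and the data of~(\ref{eq:M1-M2})--(\ref{eq:u-int}). Feeding this substitute for the threshold bounds of Lemma~\ref{le:cotas_threshold} into the computations of Lemma~\ref{le:semi_cons} verbatim (Agmon's inequality, (\ref{eq:parti_ineq}), the interpolation bound~(\ref{cota_inter}) and the inverse inequality~(\ref{inv}) used exactly as there), one obtains $\|({\bs\tau}^*)_h^n\|_0\le C_B^{*,0}\bigl(\|\nabla(\tilde\bu_h^{n-1}-\bu(t_{n-1}))\|_0+\Delta t^{1/2}\bigl(\int_{t_{n-1}}^{t_n}\|\nabla\bu_t\|_0^2\,dt\bigr)^{1/2}\bigr)$ with $C_B^{*,0}$ the analogue of~(\ref{eq:constantCB}) (with $C_{\mathrm{th}}$ there replaced by $C_{\mathrm{th}}$ plus the constant just obtained), together with the corresponding negative-norm bound; squaring, summing in time and using~(\ref{eq:u-int}) for $K_{3,2}$ and $K_{2,2}$ yields the two asserted estimates with $C_B^*=\max(C_B^{*,0},C_B^{*,1})$. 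The case $d=2$ is identical except for the use of $\|\cdot\|_2^{1/2}\|\cdot\|_0^{1/2}$ in Agmon's inequality and the appearance of $|\Omega|^{1/4}$, exactly as in Lemma~\ref{le:semi_cons}. The only step requiring real care is the control of $\frac1h\Delta t\sum_j\|\nabla(\tilde\bu_h^j-\tilde\bv_h^j)\|_0^2$ described above; everything else is a transcription of the proof of Lemma~\ref{le:semi_cons}.
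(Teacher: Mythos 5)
Your proposal is correct and follows essentially the same route as the paper: the paper's own proof simply says to argue exactly as in Lemma~\ref{le:semi_cons}, using~(\ref{eq:ns-conv2}) to control the extra term $c_{\mathrm r}\Lambda h^{-1}\Delta t\sum_j\|\nabla(\tilde\bu_h^j-\tilde\bv_h^j)\|_0^2$ arising from Lemma~\ref{le:cotas_threshold} applied to $\bw_h^j=\tilde\bu_h^j$, and then absorbs it into a bounded constant via $\Delta t\le\delta$ and~(\ref{eq:cota_deltat_up}), exactly as you do. Your write-up in fact supplies more detail on the small-parameter bookkeeping than the paper does.
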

\begin{proof}
We concentrate on the more difficult case~$d=3$.
Arguing exactly as in the proof of Lemma~\ref{le:semi_cons} and using (\ref{eq:ns-conv2}) 
we have that the result for the $L^2$ norm holds with the constant~$C_B^*$ replaced by
$$
 (c_{\textrm{A}}+c_{1}^{3/2})(M_1M_2)^{1/2}
+(1+c_{1})\left(C_{th}+c_{\rm r} h^{-1}\Lambda\bigl(\hat C_1 \Delta t^2 +\hat C_2(h^4+(\nu\delta)^2)\bigr)\right),
$$
which taking into account that~$\Delta t\le \delta < Ch $ can be bounded by
\begin{equation*}
\begin{split}
(C_B^{*,0})^2 = &(c_{\textrm{A}}+c_{1}^{3/2})(M_1M_2)^{1/2}\\
&+(1+c_{1})\left(C_{th}+c_{\rm r} \Lambda\bigl(\nu^{-2}\hat C_1 +\hat C_2\bigr)c_M^2\hbox{\rm diam}(\Omega)^3\right),
\end{split}
\end{equation*}
 The result for the negative norm follows also arguing as in
Lemma~\ref{le:semi_cons}, for an appropriate constant~$C_B^{*,1}$. The proof concludes taking $C_B^*=\max(C_B^{*,0},C_B^{*,1})$.
\end{proof}
\begin{theorem}\label{th:ns_conp}
Under the assumptions of Theorem~\ref{th:ns_conv} the following bound holds
\begin{eqnarray}\label{seacabo}
\Delta t\sum_{j=1}^nt_j\|p_h^j-p(t_j)\|_0^2\le \hat C_{3}{\Delta t}  +\hat C_4(h^2+\nu\delta),
\end{eqnarray}
where~$\hat C_3$ and~$\hat C_4$  are defined by
\begin{eqnarray*}
\hat C_3 &=& C_3 + CT\bigl(c_0\lambda^{-1}
(C_B^*)^2\nu^{-1}\tilde C_1 +(C_B^*)^2 \nu^2K_{2,2}^2T)\\
&&{} + C(c_0\lambda^{-1} + \nu T)(1+(C_B^*)^2\nu^{-1}T)  \hat C_1
T,\nonumber\\
\hat C_3 &=& C_4 +CT\bigl(c_0\lambda^{-1}
(C_B^*)^2\nu^{-1}\tilde C_2\\
&&{}+C(c_0\lambda^{-1} + \nu t_{n+1})(1+(C_B^*)^2\nu^{-1}t_{n+1}) \hat C_2(\hbox{\rm diam}(\Omega^2)(1+c_M),
\nonumber
\end{eqnarray*}
where $\tilde C_1$, $\tilde C_2$, $C_3$, $C_4$, $\hat C_1$ and $\hat C_2$ and $C_6$ in (\ref{laCtilde1}), (\ref{laCtilde2}), (\ref{laC3}), (\ref{laC4}), (\ref{laC1hat}) and (\ref{laC2hat}) respectively, $C_B^*$ is the constant in~Lemma~\ref{le:semi_cons_bis},
and~$c_M$ is the constant~in~(\ref{eq:cota_deltat_up})
\end{theorem}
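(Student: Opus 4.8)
The plan is to follow the proof of Theorem~\ref{Th2}, replacing the transient--Stokes error equation~(\ref{eq:er3_orig}) by the error equation that relates the Navier--Stokes approximation $(\tilde\bu_h^n,p_h^n)$ of~(\ref{eq:ns_non_tilde}) with the solution $(\tilde\bv_h^n,q_h^n)$ of the linear scheme~(\ref{eq:eu_non_tilde})--(\ref{eq:eu_non_tilde2}) for $\bg$ given by~(\ref{eq:ns_lag}). Setting $\tilde{\bs\epsilon}_h^n=\tilde\bu_h^n-\tilde\bv_h^n$ and $\varrho_h^n=p_h^n-q_h^n$, subtraction of the two schemes gives, for all ${\bs\chi}_h\in V_h$ and $\psi_h\in Q_h$,
\begin{align*}
\Bigl(\frac{\tilde{\bs\epsilon}_h^{n+1}-\tilde{\bs\epsilon}_h^n}{\Delta t},{\bs\chi}_h\Bigr)+\nu(\nabla\tilde{\bs\epsilon}_h^{n+1},\nabla{\bs\chi}_h)+(\nabla\varrho_h^n,{\bs\chi}_h)&=-(({\bs\tau}^*)_h^{n+1},{\bs\chi}_h),\\
(\nabla\cdot\tilde{\bs\epsilon}_h^{n+1},\psi_h)+\delta(\nabla\varrho_h^{n+1},\nabla\psi_h)&=0,
\end{align*}
where $({\bs\tau}^*)_h^{n+1}$ is the truncation error of Lemma~\ref{le:semi_cons_bis}; the choice of initial data $(\tilde\bu_h^0,p_h^0)=({\mbf s}_h^0,z_h^0)=(\tilde\bv_h^0,q_h^0)$ gives $\tilde{\bs\epsilon}_h^0=0$, $\varrho_h^0=0$, and the second equation identifies $\varrho_h^{n+1}$ with $Z_h\tilde{\bs\epsilon}_h^{n+1}$. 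Splitting $p_h^j-p(t_j)=\varrho_h^j+(q_h^j-p(t_j))$ and using the triangle inequality, it suffices to bound $\Delta t\sum_{j=1}^n t_j\|\varrho_h^j\|_0^2$ and $\Delta t\sum_{j=1}^n t_j\|q_h^j-p(t_j)\|_0^2$; the second sum is bounded directly by Theorem~\ref{Th2}, i.e.\ by~(\ref{cota_prefinal_mod}) with $\tilde\be_h^0=0$, $r_h^0=0$ and $\|\tilde\bv_h^0-\bu(0)\|_0^2\le CM_{2,1}^2(h^4+(\nu\delta)^2)$ (from~(\ref{eq:error_steady_simplified_linear}) and~(\ref{eq:u-infa})), which yields a contribution $C_3\Delta t+C_4(h^2+\nu\delta)$ up to higher order terms.

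For the first sum, applying Lemma~\ref{lema_presion} to $\varrho_h^j$ and the first error equation at level $n=j$ (integrating by parts, and using $h^2\le\nu\rho_1^2\delta$ from~(\ref{eq:cond_delta})) one gets
\begin{equation*}
\|\varrho_h^j\|_0^2\le C\nu\delta\|\nabla\varrho_h^j\|_0^2+C\Bigl\|\frac{\tilde{\bs\epsilon}_h^{j+1}-\tilde{\bs\epsilon}_h^j}{\Delta t}\Bigr\|_{-1}^2+C\nu^2\|\nabla\tilde{\bs\epsilon}_h^{j+1}\|_0^2+C\|({\bs\tau}^*)_h^{j+1}\|_{-1}^2 .
\end{equation*}
Multiplying by $t_j$ and summing (with the factor $\Delta t$), the first and third terms are controlled using $t_j\le T$ together with the a priori bounds~(\ref{eq:ns-conv2}) (recall $\varrho_h^{j+1}=Z_h\tilde{\bs\epsilon}_h^{j+1}$), (\ref{eq:ns_conv1}) and~(\ref{cota_th1_h1_bis}), and are absorbed into the $C_3\Delta t+C_4(h^2+\nu\delta)$ already present. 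The term with the discrete time derivative is treated as in Theorem~\ref{Th2}: by~(\ref{eq:cota_menos1}) and $t_j\le T$ it is bounded by $C\lambda^{-1}T\,\Delta t\sum_j\bigl\|(\tilde{\bs\epsilon}_h^{j+1}-\tilde{\bs\epsilon}_h^j)/\Delta t\bigr\|_0^2$, which by~(\ref{eq:stab_evol_2}) with $n_0=0$ (whose right-hand side reduces to $c_0\,\Delta t\sum_j\|({\bs\tau}^*)_h^{j+1}\|_0^2$, since $\tilde{\bs\epsilon}_h^0=0$ and $\varrho_h^0=0$), and then Lemma~\ref{le:semi_cons_bis} combined with~(\ref{eq:ns_conv1}), produces the terms with factor $c_0\lambda^{-1}(C_B^*)^2\nu^{-1}$ in $\hat C_3$ and $\hat C_4$. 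Finally, the last term is bounded with $t_j\le T$ by the negative-norm estimate of Lemma~\ref{le:semi_cons_bis} together with $\max_{0\le j\le n}\|\tilde\bu_h^j-\bu(t_j)\|_0^2\le\hat C_1\Delta t^2+\hat C_2(h^4+(\nu\delta)^2)$, which follows from~(\ref{eq:ns_conv0}); this yields the remaining contributions to $\hat C_3$ and $\hat C_4$. Collecting everything, and using $\Delta t\le\delta\le T$ together with~(\ref{eq:cota_deltat_up}) to dominate $\Delta t^2$ by $T\Delta t$ and $h^4+(\nu\delta)^2$ by $C(h^2+\nu\delta)$, we obtain~(\ref{seacabo}) with $\hat C_3,\hat C_4$ as stated.

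The argument is structurally identical to that of Theorem~\ref{Th2}, so the real difficulty is the bookkeeping of constants: one must carefully trace how $\tilde C_1,\tilde C_2,C_3,C_4,\hat C_1,\hat C_2,C_B^*,c_0$ and the factor $\lambda^{-1}$ propagate through the successive uses of Lemma~\ref{le:semi_cons_bis} and of the a priori bounds~(\ref{eq:ns_conv0}), (\ref{eq:ns_conv1}), (\ref{eq:ns-conv2}) established in Theorem~\ref{th:ns_conv}, and make sure the $t$-weighting is disposed of consistently (either by $t_j\le T$, or by $t_j\le t_{j+1}$ before invoking a stability estimate of Lemma~\ref{lema_stab_evol}). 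One also has to check that the $\Delta t^2$ and $h^4+(\nu\delta)^2$ contributions are genuinely dominated, which is precisely where the restrictions~(\ref{eq:cond_delta}), (\ref{eq:cond_delta2}) and~(\ref{eq:cota_deltat_up}) enter.
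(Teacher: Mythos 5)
Your proposal follows essentially the same route as the paper's proof: the same error equations for $\tilde{\bs\epsilon}_h^n=\tilde\bu_h^n-\tilde\bv_h^n$, $\varrho_h^n=p_h^n-q_h^n$, the same application of Lemma~\ref{lema_presion} yielding the same four terms, the same use of Lemma~\ref{le:semi_cons_bis} with the bounds of Theorem~\ref{th:ns_conv}, and the final triangle inequality via~(\ref{cota_prefinal_mod}). The only (immaterial) deviation is that for the discrete time-derivative term you drop the weight via $t_j\le T$ and invoke~(\ref{eq:stab_evol_2}), whereas the paper keeps $t_j\le t_{j+1}$ and uses the weighted estimate~(\ref{unamas}); both yield the same structure of constants.
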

\begin{proof}
For the proof we argue as in the proof of Theorem~\ref{Th2}.
 We first observe  that $\tilde{\bs \epsilon}_h^n=\tilde \bu_h^n - \tilde v_h^n$ and~$\varrho_h^n=p_n^n - q_h^n$ satisfy the following relations
\begin{align}
\bigl(\frac{\tilde {\bs\epsilon}_h^{n+1}-\tilde {\bs\epsilon}_h^n}{\Delta t},{\bs\chi}_h\Bigr)&+\nu(\nabla \tilde {\bs\epsilon}_h^{n+1},\nabla {\bs\chi}_h)+
(\nabla {\varrho}_h^n,{\bs \chi}_h)=(({\bs\tau}^*)_h^{n+1},{\bs\chi}_h),
\quad \forall {\bs\chi}_h\in V_h\nonumber\\
(\nabla \cdot \tilde {\bs\epsilon}_h^{n+1},\psi_h)&+\delta(\nabla {\varrho}_h^{n+1},\nabla \psi_h)=0,\qquad \forall \psi_h\in Q_h,\label{eq:errorpre}
\end{align}
where $({\bs\tau}^*)_h^{n+1}$ is defined in (\ref{tau*}).
Applying Lemma~\ref{lema_presion} and (\ref{eq:errorpre}) it is easy to obtain
\begin{align}\label{eq:conlema1_1_bis}
\Delta t \sum_{j=1}^{n}t_{j}\|\varrho_h^{j}\|_0^2\le& C\Delta t \sum_{j=1}^{n}t_{j}\nu\delta \|\nabla \varrho_h^{j}\|_0^2
+C\Delta t \sum_{j=1}^{n}t_{j}\Bigl\|\frac{\tilde {\bs\epsilon}_h^{j+1}-\tilde {\bs\epsilon}_h^j}{\Delta t}\Bigr\|_{-1}^2
\nonumber\\
&{}+C\Delta t \sum_{j=1}^{n}t_{j}\|({\bs \tau}^*)_h^j\|_{-1}^2+C\Delta t \sum_{j=1}^{n}t_{j}\nu^2\|\nabla \tilde {\bs\epsilon}_h^{j+1}\|_0^2.
\end{align}
We will bound all the terms on the right-hand side of (\ref{eq:conlema1_1_bis}). We first observe that the first and forth terms
can be bounded by
$$
C\nu t_n\Delta t \sum_{j=0}^{n}\bigl(\nu\|\nabla\tilde{\bs\epsilon}_h^{j+1}\|_0^2+{\delta}\|\nabla \varrho_h^{j}\|_0^2\bigr)
$$
and then applying~(\ref{eq:ns-conv2}) and taking into account the value of the constants $\hat C_1$ and~$\hat C_2$
in~(\ref{laC1hat}) and~(\ref{laC2hat}) we have
 \begin{equation}\label{cota_pre_l2_1_bis}
 C\Delta t \sum_{j=1}^{n}t_{j}\nu\delta \|\nabla \varrho_h^{j}\|_0^2+C\Delta t \sum_{j=1}^{n}t_{j}\nu^2\|\nabla
 \tilde {\bs\epsilon}_h^{j+1}\|_0^2\le C\nu t_n\bigl( \hat C_1 \Delta t^2 + \hat C_2(h^4+(\nu\delta)^2)\bigr).
 \end{equation}

 To bound the third term we apply Lemma~\ref{le:semi_cons_bis} and (\ref{eq:ns_conv0}) to get
 \begin{align}\label{532}
 \Delta t \sum_{j=1}^{n}t_{j}\|({\bs \tau}^*)_h^j\|_{-1}^2&\le
  (C_B^*)^2t_n\bigl(t_n\max_{0\le j\le n-1}\|\tilde\bu_h^{j}-\bu(t_{j})\|_0^2+
\nu^2 \Delta t^2 K_{2,2}^2
\bigr)
\\
&\le (C_B^*)^2t_n\bigl(\hat C_1 t_n \Delta t^2 +\hat C_2t_n (h^4+(\nu\delta)^2)+\nu^2 \Delta t^2 K_{2,2}^2 \bigr).
\nonumber
 \end{align}

To conclude we will bound the second term on the right-hand side of (\ref{eq:conlema1_1_bis}).
Since $t_j\le t_{j+1}$ and taking into account (\ref{eq:cota_menos1}) we can write
\begin{eqnarray*}
\Delta t \sum_{j=1}^{n}t_{j}\Bigl\|\frac{\tilde {\bs\epsilon}_h^{j+1}-\tilde {\bs\epsilon}_h^j}{\Delta t}\Bigr\|_{-1}^2
&\le&  \Delta t \sum_{j=1}^{n}t_{j+1}\Bigl\|\frac{\tilde {\bs\epsilon}_h^{j+1}-\tilde {\bs\epsilon}_h^j}{\Delta t}\Bigr\|_{-1}^2\nonumber\\
&\le&
C \lambda^{-1}\Delta t \sum_{j=1}^{n}t_{j+1}\Bigl\|\frac{\tilde {\bs\epsilon}_h^{j+1}-\tilde {\bs\epsilon}_h^j}{\Delta t}\Bigr\|_{0}^2.
\end{eqnarray*}
Applying (\ref{unamas}) we get
\begin{align}
\lambda^{-1}\Delta t \sum_{j=1}^{n}t_{j+1}\Bigl\|\frac{\tilde {\bs\epsilon}_h^{j+1}-\tilde {\bs\epsilon}_h^j}{\Delta t}\Bigr\|_{0}^2&\le c_0\lambda^{-1}
\Bigl( \Delta t\sum_{j=1}^nt_{j+1}\|({\bs \tau}^*)_h^j\|_0^2\nonumber\\
&\quad+\Delta t\nu\sum_{j=1}^n\|\nabla \tilde {\bs\epsilon}_h^j\|_0^2
+\Delta t \delta \sum_{j=1}^n\|\nabla \varrho_h^j\|_0^2\Bigr).\label{cota_pre_l2_5_bis}
\end{align}
To conclude we will bound the three terms on the right-hand side of (\ref{cota_pre_l2_5_bis}).
For the first one we apply Lemma~\ref{le:semi_cons_bis} and (\ref{eq:ns_conv1})  to get
\begin{align}\label{cota_pre_l2_6_bis}
\Delta t \sum_{j=1}^{n}t_{j+1}\|({\bs\tau}^*)_h^j\|_{0}^2\le &  (C_B^*)^2t_{n+1}\biggl(
\Delta t\sum_{j=0}^{n-1}\|\nabla(\tilde\bu_h^{j}-\bu(t_{j}))\|_0^2 +\nu^2 \Delta t^2K_{3,2}^2
\biggr)
\nonumber\\
{}\le& (C_B^*)^2 \frac{t_{n+1}}{\nu}\bigl( \tilde C_1\Delta t + \tilde C_2(h^2+\nu\delta) \nonumber\\
&{}+  \hat C_1
\Delta t^2 + \hat C_2(h^4+(\nu\delta)^2)\bigr),
\end{align}
where $\tilde C_1$, $\tilde C_2$, $\hat C_1$ and~$\hat C_2$  are the constants in (\ref{laCtilde1}), (\ref{laCtilde2}),
(\ref{laC1hat}) and (\ref{laC2hat}) respectively.
Finally to bound the last two terms  on the right-hand side of (\ref{cota_pre_l2_5_bis}) we apply (\ref{eq:ns-conv2}) to obtain
\begin{eqnarray}\label{cota_pre_l2_7_bis}
\Delta t\nu\sum_{j=1}^n\|\nabla \tilde {\bs\epsilon}_h^j\|_0^2
+\Delta t \delta \sum_{j=1}^n\|\nabla \varrho_h^j\|_0^2\le
\hat C_1\Delta t^2 + \hat C_2 (h^4+(\nu\delta)^2).
\end{eqnarray}
Inserting (\ref{cota_pre_l2_6_bis}) and (\ref{cota_pre_l2_7_bis}) in (\ref{cota_pre_l2_5_bis}) we get
\begin{align}\label{536}
\Delta t \sum_{j=1}^{n}t_{j}\Bigl\|\frac{\tilde {\bs\epsilon}_h^{j+1}-\tilde {\bs\epsilon}_h^j}{\Delta t}\Bigr\|_{-1}^2&
\le c_0\lambda^{-1}
\Bigl((C_B^*)^2\nu^{-1}t_{n+1}\bigl( \tilde C_1\Delta t + \tilde C_2(h^2+\nu\delta)\bigr)
\\
&{}+\bigl(1+(C_B^*)^2\nu^{-1}t_{n+1}\bigr)\bigl(\hat C_1
\Delta t^2 + \hat C_2(h^4+(\nu\delta)^2)\bigr)\Bigr).
\nonumber
\end{align}
Inserting (\ref{cota_pre_l2_1_bis}), (\ref{532}) and (\ref{536}) into (\ref{eq:conlema1_1_bis}) we get
\begin{eqnarray*}
&&\Delta t \sum_{j=1}^{n}t_{j}\|\varrho_h^{j}\|_0^2 \le
Cc_0\lambda^{-1}
(C_B^*)^2\nu^{-1}t_{n+1}\bigl( \tilde C_1\Delta t + \tilde C_2(h^2+\nu\delta)\bigr)
\nonumber\\
&&\qquad\quad{}+C(c_0\lambda^{-1} + \nu t_{n+1})(1+(C_B^*)^2\nu^{-1}t_{n+1}) \bigl( \hat C_1
\Delta t^2 + \hat C_2(h^4+(\nu\delta)^2)\bigr)
\nonumber\\
&&\qquad\quad {}+C(C_B^*)^2 t_n\nu^2K_{2,2}^2 \Delta t^2.
\end{eqnarray*}
Applying triangle inequality together with (\ref{cota_prefinal_mod}) we finally reach (\ref{seacabo}).
\end{proof}

\end{document}